\theoremstyle{plain}
\newtheorem{thm}{Theorem}[section]
\newtheorem{theorem}[thm]{Theorem}
\newtheorem{lemma}[thm]{Lemma}
\newtheorem{proposition}[thm]{Proposition}
\theoremstyle{definition}
\newtheorem{remarks}[thm]{Remarks}
\newtheorem{definition}[thm]{Definition}
\numberwithin{equation}{section}
 \title[Strong submeasures and applications to non-compact dynamical systems]{Strong submeasures and applications to non-compact dynamical systems}
 \author{Tuyen Trung Truong}
   \address{Department of Mathematics, University of Oslo, Blindern 0851 Oslo, Norway}
  \email{tuyentt@math.uio.no}
    \date{\today}
    \keywords{Invariant submeasures; Open-dense defined maps; Pullback and Pushforward; Strong submeasure; Variational principle}
   \subjclass[2010]{32-XX, 37-XX, 14-XX}
\begin{document}
\maketitle

\begin{abstract}
A strong submeasure on a compact metric space X is a sub-linear and bounded operator on the space of continuous functions on X. A strong submeasure is positive if it is non-decreasing. By Hahn-Banach theorem, a positive strong submeasure is the supremum of a non-empty collection of measures whose masses are uniformly bounded from above. 

There are many natural examples of continuous maps of the forms $f:U\rightarrow X$, where $X$ is a compact metric space and $U\subset X$ is an open dense subsets, where $f$ cannot extend to a reasonable function on $X$. We can mention cases such as: transcendental maps of $\mathbb{C}$, meromorphic maps on compact complex varieties, or continuous self-maps $f:U\rightarrow U$ of a dense open subset $U\subset X$ where $X$ is a compact metric space.       

To the maps mentioned in the previous paragraph, the use of measures is not sufficient to establish the basic properties of ergodic theory, such as the existence of invariant measures or a reasonable definition of measure theoretic entropy and topological entropy. In this paper, we show that strong submeasures can be used to completely resolve the issue and establish these basic properties. The same idea can also be applied to improve on \'etale topological entropy, previously defined by the author. In another paper we apply strong submeasures to intersection of positive closed $(1,1)$ currents on compact K\"ahler manifolds.
\end{abstract}

\section{Introduction} In dynamical systems and ergodic theory, measures play a crucial role. At least since H. Poincar\'e, the first fundamental step for studying the dynamics of a continuous map $f:X\rightarrow X$ of a compact metric space $X$ is to construct invariant probability measures, that is those measures $\mu$ for which $f_*(\mu )=\mu$, and in particular those with measure entropy equal to the topological entropy. A way to construct invariant measures is to start from a positive measure $\mu _0$, and then to consider any cluster points of the Cesaro's average $\frac{1}{n}\sum _{j=0}^n(f_*)^j(\mu _0)$. To this end, a crucial property is that we can pushforward a probability measure by a continuous map and obtain another probability measure, and that this pushforward is linear on the space of measures. There is also the fundamental result \cite{goodwyn, goodman}, called the Variational Principle, which relates measure entropies of invariant measures of a compact metric space and the topological entropy of the map. When we work with compact complex varieties, usually it is very difficult to construct dynamically interesting holomorphic maps $f:X\rightarrow X$, and one must be willing to deal with dominant meromorphic maps $f:X\dashrightarrow X$ in order to go forward. We can still define a notion of entropy for meromorphic maps, and when $X$ is K\"ahler \cite{dinh-sibony3} relates this topological notion with geometrical/cohomological information (called dynamical degrees) of the map. However, only for very special maps which ideas from dynamics of diffeomorphisms of compact Riemann manifolds are applicable so far to the study of meromorphic maps. For these special meromorphic maps, one can construct very special invariant measures $\mu$ with no mass on proper analytic sets, which can be pushed forward. (Here, we recall this pushforward for the reader's convenience. Let $\mu$ be a probability measure with no mass on proper analytic subsets of $X$. Then we define $f_*(\mu )$ as the extension by zero of the probability measure $(f|_{X\backslash I(f)})_*(\mu )$, where $I(f)$ is the indeterminacy set of $f$ and hence $f$ is holomorphic on $X\backslash I(f)$.) For the majority of meromorphic maps, however, there is no obvious such special invariant measures, and one faces difficulty in constructing interesting invariant measures by Cesaro's average as above, since the cluster points of these measures (even if the starting measure $\mu _0$ has no mass on proper analytic subsets) are not guaranteed to have no mass on proper analytic subsets. The problem is then that we really do not know how to pushforward, in a reasonable and stay in the space of measures, a measure with support in the indeterminacy set $I(f)$ of $f$. For example, if $x_0\in I(f)$, then typically its image under the map $f$ will be of positive dimension, and there is no reasonable way to define the pushforward $f_*(\delta _{x_0})$, of the Dirac measure at $x_0$, as a {\bf measure}. Still, we can ask the following questions: Can we define instead the pushforward $f_*(\delta _{x_0})$ as something else more general than a measure? More importantly, can we hope to obtain some analog of the fundamental results mentioned above for all meromorphic maps? 

Besides the construction using Cesaro's average as above, when $X$ is K\"ahler, there is one other approach of constructing invariant measures, very related to special properties of compact K\"ahler manifolds, by intersecting dynamically interesting special positive closed currents (so-called Green's currents).  Intersection of positive closed currents, in particular those of bidegrees $(1,1)$, is an interesting topic itself with many applications in complex geometry and complex dynamics, and has been intensively studied. The methods employed so far by most researchers in this topic are local in nature, and the resulting intersections are supposed to be positive measures. These local methods also usually provide answers which are not compatible with intersection in cohomology, and the latter is a consideration one needs to take into account in order for the definition to be meaningful. However, again here not much is known about what to do if the currents to intersect are too singular. For example, is there any meaning to assign to self-intersection of the current of integration on a line $L\subset \mathbb{P}^2$?  For an even more interesting example, is there any meaning to assign  to self-intersection of the current of integration on a curve $C$ in a compact K\"ahler surface whose self-intersection in cohomology is $\{C\}.\{C\}=-1$? 

 One key question is that for dynamics of dominant meromorphic maps, should we take more consideration of the indeterminacy set of the map as well as of its iterates (this latter point has been so far not very much pursued by the works in the literature)? Likewise, in considering intersection of positive closed currents, should we  look closer at the singular parts of the currents involved, instead of throwing them away (as in the local approaches mentioned above)? 

All the maps mentioned above belong to a more general class of maps of the form: $f:U\rightarrow X$, where $X$ is a compact metric space, $U\subset X$ is open dense, and $f$ is a continuous map. Some natural examples of this kind of maps are transcendental maps $f:\mathbb{C}\rightarrow \mathbb{C}$, such as $f(z)=e^z$, where we choose $U=\mathbb{C}$ and $X=\mathbb{P}^1$ the preferable compactification of $U$. A generalisation of the latter maps are continuous maps $f:U\rightarrow U$, where $U$ is a non-compact metric space with a preferable compactification $X$.  In this paper, we give applications of strong submeasures, a classical but largely overlooked notion, to the dynamics of maps of these maps. Submeasures can also be applied to the question of intersection of positive closed currents on compact K\"ahler manifolds, to which we will address in a separate paper. 

To ease the presentation of the paper, we formally give a definition of the above class of maps. 

\begin{definition} Let $X$ and $Y$ be topological spaces. By an {\bf open-dense defined} map between $X$ and $Y$, we mean that there is an open dense subset $U$ of $X$ and a map $f:U\rightarrow Y$. We emphasise that $f$ does not need to be defined on the whole of $X$.  We denote such a map as $f:X\dashrightarrow Y$. 

If the map $f:U\rightarrow Y$ above is moreover continuous, then we say that $f$ is a continuous open-dense defined map between $X$ and $Y$.  In this case, we denote by OpenDom(f) the largest such subset $U$, and by OpenIm(f) the image $f(OpenDom(f))$. We also define the indeterminacy set of $f$ to be $I(f)=X\backslash OpenDom(f)$.  
\label{OpenDenseDefinedMapsDefinition}\end{definition}

We next define strong submeasures in detail. Let $X$ be a compact metric space. Denote by $\varphi \in C^0(X)$ the sup-norm $||\varphi ||_{L^{\infty}}=\sup _{x\in X}|\varphi (x)|$. We recall that a functional $\mu :C^0(X)\rightarrow \mathbb{R}$ is {\bf sub-linear} if $\mu (\varphi _1+\varphi _2)\leq \mu (\varphi _1)+\mu (\varphi _2)$ and $\mu (\lambda \varphi )=\lambda \mu (\varphi )$ for $\varphi _1,\varphi _2,\varphi \in C^0(X)$ and a non-negative constant $\lambda$. A {\bf strong submeasure} is then simply a sub-linear functional $\mu :C^0(X)\rightarrow \mathbb{R}$ which is also {\bf bounded}, that is there is a constant $C>0$ so that for all $\varphi \in C^0(X)$ we have $|\mu (\varphi )|\leq C||\varphi ||_{L^{\infty}}$. The least such constant $C$ is called the norm of $\mu$ and is denoted by $||\mu ||$. A strong submeasure $\mu$ is {\bf positive} if it is non-decreasing, that is for all $\varphi _1\geq \varphi _2$ we have $\mu (\varphi _1)\geq \mu (\varphi _2)$. It is easy to check that a strong submeasure is Lipschitz continuous $|\mu (\varphi _1)-\mu (\varphi _2)|\leq ||\mu ||\times ||\varphi _1-\varphi _2||_{L^{\infty}}$, and convex $\mu (t_1\varphi _1+t_2\varphi _2)\leq t_1\mu (\varphi _1)+t_2\mu (\varphi _2)$ for $t_1,t_2\geq 0$. We denote by $SM(X)$ the set of all strong submeasures on $X$, and $SM^+(X)$ the set of all positive strong submeasures on $X$. 

By Riesz representation theorem (see \cite{rudin}), on $X$ a measure of bounded mass  $\mu$ is the same as a {\bf linear} operator $\mu: C^0(X)\rightarrow \mathbb{R}$, that is $\mu (\lambda _1\varphi _1+\lambda _2\varphi _2)=\lambda _1\mu (\varphi _1)+\lambda _2(\varphi _2)$ for all $\varphi _1,\varphi _2\in C^0(X)$ and constants $\lambda _1,\lambda _2$; which is {\bf bounded}: $|\mu (\varphi )|\leq C||\varphi ||_{L^{\infty}}$ for a constant $C$ independent of $\varphi \in C^0(X)$, and {\bf positive}: $\mu (\varphi )\geq 0$ whenever $\varphi \in C^0(X)$ is non-negative. Note that we can then choose $C=\mu (1)$ (the mass of the measure $\mu $) and by linearity the positivity is the same as having 
\begin{equation}
\mu (\varphi _1)\geq \mu (\varphi _2)
\label{EquationPositivity}\end{equation} 
for all $\varphi _1,\varphi _2\in C^0(X)$ satisfying $\varphi _1\geq \varphi _2$.  For later reference, we denote by $M(X)$ the set of signed measures on $X$ and by $M^+(X)$ the set of positive measures on $X$. Note that $M(X)\subset SM(X)$ and $M^+(X)\subset SM^+(X)$. 

The {\bf first main idea} of this paper, to deal with pushforward of a positive strong submeasure, is as follows. If $U\subset X$ is an open dense set as above, then for any {\bf bounded} continuous function $\psi :U\rightarrow \mathbb{R}$, there is a canonical way to extend it to a bounded upper-semicontinuous function $E(g):X\rightarrow \mathbb{R}$. Now, if $f:U\rightarrow Y$ is a continuous function - where $Y$ is an another compact metric space - and $\mu$ is a positive finite Borel measure on $X$, then we can define a {\bf push-forward} $f_*(\mu )$ as a positive strong submeasure on $X$ in the following manner. If $\varphi :X\rightarrow \mathbb{R}$ is a continuous function, then $f^*(\varphi ): U\rightarrow \mathbb{R}$ is a bounded continuous function, and hence we have the canonical extension $E(f^*(\varphi ))$ which is a bounded upper-semicontinuous function on $X$. Then we define
\begin{eqnarray*}
(f_*\mu)(\varphi ):= \mu (E(f^*(\varphi ))).  
\end{eqnarray*}       
Details will be given in Section 2 where we show that the RHS of this definition gives rise to a strong positive submeasure and does not depend on the choice of $U$. Hence, this operator is well-defined for continuous  open-dense defined maps. The same idea can be applied to define more generally pushforward of positive strong submeasures. 

When $f:X\dashrightarrow X$ is a meromorphic map of a compact complex variety, then we can choose $U$ to be any open dense set of $X$ on which $f$ is a genuinely holomorphic function. In this case, because of Hironaka's resolution of singularities, one usually prefers to work with a desingularity of the graph of $f$. This is helpful in defining operations such as pulllback or pushforward of smooth closed forms, where the resulting is currents.  It will be shown that the definition given above for pushforward of measures can also be done using these desingularities of graphs, and hence illustrates that our definition is reasonable. 

When the map $f:U\rightarrow Y$ as above has dense image and is a covering map onto its image and of finite degree $d$ (for example, if this map is induced from a dominant meromorphic map between two compact complex spaces of the same dimension, and when $U$ is chosen appropriately), then we can also define similarly the {\bf pullback} of a positive strong submeasure. In Theorem \ref{TheoremSubmeasurePushforwardMeromorphic} we will prove some fundamental properties of these pushforward and pullback operators on positive strong submeasures. We extract here some most interesting properties. 

\begin{theorem} Let $X,Y$ be compact metric spaces, $U\subset X$ a dense open set, and  $f:X\dashrightarrow Y$ be  a continuous open-dense defined map. 

i) If $\mu _n\in SM^+(X)$ weakly converges to $\mu$,  and $\nu$ is a cluster point of $f_*(\mu _n)$, then $\nu \leq f_*(\mu )$. If $f$ is holomorphic, then $\lim _{n\rightarrow\infty}f_*(\mu _n)=f_*(\mu )$. 

ii) For any positive strong submeasure $\mu$, we have $f_*(\mu )=\sup _{\chi \in \mathcal{G}(\mu)}f_*(\chi )$, where $\mathcal{G}(\mu )=\{\chi :$ $\chi $ is a measure and $\chi \leq \mu \}$. 

iii) For positive strong submeasures $\mu _1,\mu _2$, we have $f_*(\mu _1+\mu _2)\geq f_*(\mu _1)+f_*(\mu _2)$. 

\label{TheoremSubmeasurePushforwardMeromorphicShortVersion}\end{theorem}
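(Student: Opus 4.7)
The entire argument uses the defining identity $(f_*\mu)(\varphi)=\mu(E(f^*\varphi))$, where $E(f^*\varphi)$ is the canonical bounded upper-semicontinuous (usc) extension to $X$ of the bounded continuous function $f^*\varphi$ on $U=\text{OpenDom}(f)$. A positive strong submeasure is a priori only defined on $C^0(X)$, so I will extend it to bounded usc functions $g$ by the natural formula $\mu(g):=\inf\{\mu(\psi):\psi\in C^0(X),\ \psi\geq g\}$, which remains positive, sub-linear, and bounded, and which agrees with integration when $\mu$ is a positive Borel measure by outer regularity.

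For (i), fix $\varphi\in C^0(Y)$ and set $g=E(f^*\varphi)$. For any continuous $\psi\geq g$, monotonicity of $\mu_n\in SM^+(X)$ gives $(f_*\mu_n)(\varphi)=\mu_n(g)\leq\mu_n(\psi)$, and weak convergence $\mu_n\to\mu$ yields $\mu_n(\psi)\to\mu(\psi)$. Hence $\limsup_n (f_*\mu_n)(\varphi)\leq\mu(\psi)$, and taking infimum over continuous $\psi\geq g$ gives $\limsup_n (f_*\mu_n)(\varphi)\leq\mu(g)=(f_*\mu)(\varphi)$. Therefore every weak-$*$ cluster point $\nu$ of $f_*(\mu_n)$ satisfies $\nu\leq f_*\mu$. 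When $f$ is holomorphic, $g=f^*\varphi$ is already continuous on all of $X$, the extension step is trivial, and weak convergence yields $f_*\mu_n\to f_*\mu$.

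For (ii), the direction $f_*\mu\geq\sup_{\chi\in\mathcal{G}(\mu)}f_*\chi$ is immediate: if $\chi\in\mathcal{G}(\mu)$ then $(f_*\chi)(\varphi)=\int g\,d\chi\leq\inf_{\psi\geq g}\chi(\psi)\leq\inf_{\psi\geq g}\mu(\psi)=(f_*\mu)(\varphi)$. For the reverse I approximate $g$ from above by continuous $\psi_k\downarrow g$ (for instance via the infimal convolution on the compact metric space $X$). By the Hahn-Banach representation recalled in the abstract, for each $k$ there is a positive measure $\chi_k\leq\mu$ with $\chi_k(\psi_k)=\mu(\psi_k)$; these have mass uniformly bounded by $\|\mu\|$, so a subsequence converges weakly to some $\chi^*\in\mathcal{G}(\mu)$. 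For $k\geq j$ the inequality $\psi_j\geq\psi_k$ and positivity of $\chi_k$ give $\chi_k(\psi_j)\geq\chi_k(\psi_k)=\mu(\psi_k)\geq\mu(g)$; passing to the weak limit in $k$ yields $\chi^*(\psi_j)\geq\mu(g)$, and then $j\to\infty$ with dominated convergence against the finite measure $\chi^*$ gives $\chi^*(g)\geq\mu(g)$. Combined with the opposite inequality this proves $(f_*\chi^*)(\varphi)=(f_*\mu)(\varphi)$, so the supremum is attained.

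For (iii), sub-additivity of the infimum gives
\[
(\mu_1+\mu_2)(g)=\inf_\psi\bigl(\mu_1(\psi)+\mu_2(\psi)\bigr)\geq\inf_\psi\mu_1(\psi)+\inf_\psi\mu_2(\psi)=\mu_1(g)+\mu_2(g),
\]
which reads $\bigl(f_*(\mu_1+\mu_2)\bigr)(\varphi)\geq(f_*\mu_1)(\varphi)+(f_*\mu_2)(\varphi)$ for every $\varphi$; possible strictness reflects that a continuous function majorizing the usc $g$ need not split efficiently between the two summands. The main obstacle throughout is the compactness-plus-exchange-of-limits step in (ii): Hahn-Banach produces measures $\chi_k$ saturating $\mu$ at each $\psi_k$, but one must then show that their weak cluster point still controls $\mu$ on the usc function $g$ itself. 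All other steps are routine monotone manipulations on the usc extension of $\mu$.
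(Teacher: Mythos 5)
Your argument is correct, and while parts (i) and (iii) track the paper closely, part (ii) takes a genuinely different route. For (i) you and the paper do the same thing: test against a single continuous majorant $\psi_0$ of $E(f^*\varphi)$ and use weak convergence. For (iii) the paper deduces the inequality from part (ii) via $\mathcal{G}(\mu_1)+\mathcal{G}(\mu_2)\subset\mathcal{G}(\mu_1+\mu_2)$, whereas you use superadditivity of the infimum over the common index set $C^0(X,\geq g)$ directly; yours is simpler and does not depend on (ii). The substantive divergence is (ii): the paper factors $f_*=(\pi_{Y,f})_*\circ\pi_{X,f}^*$ through the graph $\Gamma_f$, reduces to the projection, and shows that any measure $\chi'\leq \pi_{X,f}^*(\mu)$ pushes down to $\chi=(\pi_{X,f})_*(\chi')\leq\mu$ with $\pi_{X,f}^*(\chi)\geq\chi'$, using $(\pi_{X,f})_*\pi_{X,f}^*(\mu)=\mu$. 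You instead work directly on $X$ with the usc function $g=E(f^*\varphi)$: take continuous $\psi_k\downarrow g$ (Baire, as quoted in Section 2.1), use Hahn--Banach to produce positive measures $\chi_k\leq\mu$ with $\chi_k(\psi_k)=\mu(\psi_k)$, extract a weak cluster point $\chi^*\leq\mu$, and exchange limits via $\chi_k(\psi_j)\geq\mu(\psi_k)\geq\mu(g)$ for $k\geq j$ followed by monotone convergence for the finite measure $\chi^*$. This is self-contained, avoids the graph machinery entirely, and yields the slightly stronger conclusion that the supremum in (ii) is \emph{attained} at each fixed $\varphi$; the paper's route is more structural and reuses the pullback-by-$\pi_{X,f}$ formalism needed elsewhere (e.g.\ for Theorem \ref{TheoremPushforwardBlowup}). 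One small point worth making explicit: the functional produced by Hahn--Banach dominated by the sublinear $\mu$ is automatically a \emph{positive} measure because $\mu(\varphi)\leq\mu(0)=0$ for $\varphi\leq 0$, so it indeed lies in $\mathcal{G}(\mu)$; alternatively, approximate saturation $\chi_k(\psi_k)\geq\mu(\psi_k)-1/k$ would suffice for your limiting argument.
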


 In Example 2 in Section 2.2, we will show that strict inequality can happen in part i) in general. It also shows that, in contrast to the case of a continuous map - see Section 4, part ii) does not hold in general if we replace $\mathcal{G}(\mu )$ by a smaller set $\mathcal{G}$ (still satisfying $\mu =\sup _{\chi \in \mathcal{G}}\chi$). We also remark that several results in Theorem \ref{TheoremSubmeasurePushforwardMeromorphic} (such as parts 1,2,3) can be extended easily to meromorphic correspondences. 

Before going further, let us calculate explicitly one simple but interesting example. 

{\bf Example 1.} Let $\pi :Y\rightarrow X$ be the blowup of $X$ at a point $p$, and $V\subset Y$ the exceptional divisor. Let $\delta _p$ be the Dirac measure at $p$. Then for any continuous function $\varphi $ on $Y$, we have
\begin{eqnarray*}
\pi ^*(\delta _p)(\varphi )=\max _{y\in V}\varphi (y).
\end{eqnarray*}
Therefore, $\pi ^*(\delta _p)$ is {\bf not} a measure. In particular, if $A\subset Y$ is a closed set then $\pi ^*(\delta _p) (A)=\inf _{\varphi \in C^0(X,\geq 1_A)}\pi ^*(\delta _p)(\varphi )$ is $\delta _p(\pi (A\cap Y))$.   

\begin{proof}[Proof of Example 1] By definition 
\begin{eqnarray*}
\pi ^*(\delta _p)(\varphi )=\inf _{\psi \in C^0(Y,\geq \pi _*(\varphi ))}\delta _p(\psi )=\inf _{\psi \in C^0(Y,\geq \pi _*(\varphi ))}\psi (p). 
\end{eqnarray*}
Since $\pi :Y\backslash V\rightarrow X\backslash \{p\}$ is an isomorphism, it is easy to check that $\pi _*(\varphi )(p)=\max _{y\in V}\varphi (y)$. Therefore, for any $\psi \in C^0(Y,\geq \pi _*(\varphi ))$, we have $\psi (p)\geq \max _{y\in V}\varphi (y)$. Hence by definition $\pi ^*(\delta _p)(\varphi )\geq \max _{y\in V}\varphi (y)$. On the other hand, for any $\epsilon >0$, choose a small neighborhood $U_{\epsilon}$ of $p$ so that 
\begin{eqnarray*}
\sup _{y\in \pi ^{-1}(U_{\epsilon })}\varphi (y)\leq \epsilon + \max _{y\in V}\varphi (y). 
\end{eqnarray*}
It follows that $\sup _{U_{\epsilon }}\pi _*(\varphi )\leq \epsilon + \max _{y\in V}\varphi (y)$. Since $\pi _*(\varphi )$ is continuous on $X\backslash \{p\}$, it follows by elementary set theoretic topology that we can find a continuous function $\psi $ on $X$ so that $\psi \geq \pi _*(\varphi )$ and $\sup _{U_{\epsilon}}\psi \leq \epsilon + \max _{y\in V}\varphi (y)$. It follows that $\pi ^*(\delta _p)(\varphi )\leq \epsilon + \max _{y\in V}\varphi (y)$. Since $\epsilon $ is an arbitrary positive number, we conclude from the above discussion that $\pi ^*(\delta _p)(\varphi )=\max _{y\in V}\varphi (y)$. Similarly, we can show that $\pi ^*(\delta _p) (A)=\delta _p(\pi (A\cap Y))$. 
\end{proof}

Given $f:X\dashrightarrow Y$ a dominant meromorphic map between compact complex varieties and $\mu $ a positive strong submeasure on $X$. Without loss of generality, we can assume that $X$ is smooth, by using Hironaka's resolution of singularities. By part ii) of Theorem \ref{TheoremSubmeasurePushforwardMeromorphicShortVersion} and its proof, to describe $f_*(\mu )$ it suffices to describe $\pi ^*(\mu )$ where $\pi :Z\rightarrow X$ is a blowup at a smooth centre and $\mu$ is a measure. The following result addresses this question. 

\begin{theorem}
Let $\pi :Z\rightarrow X$ be the blowup of $X$ at an irreducible smooth subvariety $A\subset X$. Let $\varphi \in C^0(Z)$.  Let $\mu$ be a positive measure on $X$, and decompose $\mu =\mu _1+\mu _2$ where $\mu _1$ has no mass on $A$ and $\mu _2$ has support on $A$. Then $\pi ^*(\mu _1)$ is a positive measure on $Z$, $\pi _*(\varphi )|_A$ is continuous, and we have
\begin{eqnarray*}
\pi ^*(\mu )(\varphi )=\pi ^*(\mu _1)(\varphi )+\mu _2(\pi _*(\varphi )|_A).
\end{eqnarray*}
Moreover, an explicit choice of the collection $\mathcal{G}$ in part 2) of Theorem \ref{TheoremHahnBanach} for $\pi ^*(\mu )$ will be explicitly described in the proof. 
\label{TheoremPushforwardBlowup}\end{theorem}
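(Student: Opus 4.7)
The plan is to work directly from the defining formula of pullback---$\pi^*(\eta)(\varphi)=\inf\{\eta(\psi):\psi\in C^0(X),\ \psi\ge\pi_*(\varphi)\}$ for a positive measure $\eta$ on $X$---exploiting two facts about the bounded upper-semicontinuous function $\pi_*(\varphi)$ on $X$. First, repeating the pointwise calculation from the proof of Example~1 almost verbatim, with the isolated point $p$ replaced by an arbitrary $x\in A$, gives $\pi_*(\varphi)(x)=\max_{y\in \pi^{-1}(x)}\varphi(y)$ for every $x\in A$. Since the restriction $\pi|_E\colon E\to A$ of the blowup to its exceptional divisor is a proper locally trivial $\mathbb{P}^{c-1}$-bundle over the smooth centre ($c$ the codimension of $A$ in $X$), its compact fibres vary continuously in the Hausdorff sense, and so the fibrewise maximum of the continuous function $\varphi$ is continuous in the base point; this yields the continuity of $\pi_*(\varphi)|_A$. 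Second, since $\pi_*(\varphi)$ is usc and bounded on the compact metric space $X$, a standard regularisation produces a decreasing sequence $\psi_n\in C^0(X)$ with $\psi_n\downarrow\pi_*(\varphi)$ pointwise, which will be the workhorse for the rest of the argument.

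Let $\nu$ be the positive Borel measure on $Z$ defined as the extension by zero across $E$ of $(\pi|_{Z\setminus E})^{-1}_*(\mu_1|_{X\setminus A})$; this is well-defined because $\pi|_{Z\setminus E}\colon Z\setminus E\to X\setminus A$ is a biholomorphism and $\mu_1(A)=0$. Bounded convergence along $\psi_n$ combined with $\mu_1(A)=0$ gives $\mu_1(\psi_n)\downarrow\int_{X\setminus A}\varphi\circ\pi^{-1}\,d\mu_1=\nu(\varphi)$, while for any admissible $\psi$ the pointwise bound $\psi\ge\pi_*(\varphi)$ forces $\mu_1(\psi)\ge\mu_1(\pi_*(\varphi))=\nu(\varphi)$; these two inequalities together identify $\pi^*(\mu_1)=\nu$ as a positive measure. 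The decomposition formula then drops out of the same approximation: since $\mu_2$ is supported on $A$ and $\psi_n|_A\downarrow\pi_*(\varphi)|_A$, bounded convergence gives $\mu_2(\psi_n)\to\mu_2(\pi_*(\varphi)|_A)$, so $\mu(\psi_n)\to\pi^*(\mu_1)(\varphi)+\mu_2(\pi_*(\varphi)|_A)$, which yields the $\le$ direction. Conversely, any admissible $\psi$ satisfies $\mu_2(\psi)\ge\mu_2(\pi_*(\varphi)|_A)$ because $\psi\ge\pi_*(\varphi)$ on $A$ and $\mu_2$ lives on $A$; taking the infimum in $\psi$ of $\mu_1(\psi)+\mu_2(\pi_*(\varphi)|_A)$ delivers the matching $\ge$ bound.

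For the explicit family $\mathcal{G}$ realising $\pi^*(\mu)$ as a supremum of measures, I propose
\[
\mathcal{G}=\bigl\{\nu+\sigma_*(\mu_2)\ :\ \sigma\colon A\to E\ \text{is a Borel section of}\ \pi|_E\bigr\}.
\]
Each $\chi_\sigma\in\mathcal{G}$ is a positive measure on $Z$ with $\chi_\sigma(\varphi)=\nu(\varphi)+\int_A\varphi(\sigma(x))\,d\mu_2(x)\le\pi^*(\mu)(\varphi)$, by the fibrewise bound $\varphi(\sigma(x))\le\pi_*(\varphi)|_A(x)$. To recover the supremum, for each $\varphi$ and $\epsilon>0$ I would apply a measurable selection theorem (e.g.\ Kuratowski--Ryll-Nardzewski) to the non-empty closed-valued multifunction $x\mapsto\{y\in E_x:\varphi(y)\ge\pi_*(\varphi)|_A(x)-\epsilon\}$, whose weak measurability follows from the continuity of both $\varphi$ and $\pi_*(\varphi)|_A$; the resulting Borel section $\sigma_\epsilon$ satisfies $\chi_{\sigma_\epsilon}(\varphi)\ge\pi^*(\mu)(\varphi)-\epsilon\,\mu_2(A)$. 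The main obstacle is precisely this measurable selection: once the continuity of $\pi_*(\varphi)|_A$ is secured from the bundle structure of $\pi|_E$, every other step is a formal consequence of the standard approximation of a bounded usc function from above by a decreasing sequence of continuous functions, together with bounded convergence for finite positive measures.
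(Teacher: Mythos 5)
Your argument is correct and follows the same skeleton as the paper's proof (compute $\pi_*(\varphi)=\max_{\pi^{-1}(x)}\varphi$ on $A$, get continuity of $\pi_*(\varphi)|_A$ from the $\mathbb{P}^{c-1}$-bundle structure of the exceptional divisor, then split the defining infimum according to $\mu=\mu_1+\mu_2$), but it differs in the two technical devices used. For the displayed formula, the paper proves a separate lemma (Lemma \ref{LemmaGoodContinuousFunctionChoice}) constructing, for each $\epsilon$, a \emph{single} admissible $\psi\in C^0(X,\geq\pi_*(\varphi))$ that agrees with $\pi_*(\varphi)$ on a prescribed $U'\subset\subset X\setminus A$ and is $\epsilon$-close to $\pi_*(\varphi)|_A$ on $A$; you instead take Baire's decreasing sequence $\psi_n\downarrow\pi_*(\varphi)$ and invoke monotone convergence for the finite measures $\mu_1$ and $\mu_2$ separately. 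Your route is shorter and avoids the partition-of-unity gluing, at the cost of only controlling the infimum in the limit rather than exhibiting near-optimal test functions; both are fully rigorous here. For the explicit collection $\mathcal{G}$, the difference is more substantial: the paper takes measures $\chi(\varphi)=\mu_2\bigl(\sum_i\tau_i(\cdot)\,\varphi(\gamma_i(\cdot))\bigr)$ built from finite open covers of $A$, subordinate partitions of unity $\{\tau_i\}$, and \emph{local continuous} sections $\gamma_i$ of $\pi|_E$ (which exist by local triviality), and recovers the supremum by refining the cover; you use global \emph{Borel} sections obtained from the Kuratowski--Ryll-Nardzewski selection theorem applied to the closed-valued multifunction $x\mapsto\{y\in E_x:\varphi(y)\geq\pi_*(\varphi)|_A(x)-\epsilon\}$. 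Your family is arguably more geometric (each element is an honest pushforward $\nu+\sigma_*(\mu_2)$ under a section), while the paper's is more elementary in that it needs no measurable-selection machinery and stays entirely within continuous data; if you keep your version, you should spell out why the multifunction is weakly measurable (closed graph in the locally trivialized bundle $U_i\times\mathbb{P}^{c-1}$ with compact fibres suffices), since that is the one step you currently only gesture at.
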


The {\bf second main idea} of this paper, to deal with invariant positive strong submeasures, is as follows. One can start from a positive strong submeasure $\mu _0$, and perform the Cesaro's process to the  pushforward iterates $(f_*)^n(\mu _0)$. The cluster points $\mu _{\infty}$ are now in general not invariant - as in the case of continuous selfmaps of compact metric spaces - the reason being that the pushforward of continuous open-dense defined selfmaps $f:X\dashrightarrow X$ on positive strong submeasures is not continuous, as mentioned in the previous paragraph. It turns out, however, there is a canonical way, using a type of min-max principle, to assign to any such cluster point $\mu _{\infty}$ an invariant positive strong submeasure. Here is the main result. 

\begin{theorem} Let $f:X\dashrightarrow X$ be a continuous open-dense defined map on a compact metric space $X$. Let $0\not= \mu _0\in SM^+(X)$.

1) Let $\mu _0$ be a positive strong submeasure on $X$, and $\mu$ is a cluster point of Cesaro's averages $\frac{1}{n}\sum _{j=0}^n(f_*)^j(\mu _0)$. Then $f_*(\mu )\geq \mu$.


2) If $f_*(\mu _0)\leq \mu _0$, then the set  $\{\mu \in SM^+(X):~ \mu \leq \mu _0,~f_*(\mu )= \mu \}$ is non-empty and has a largest element, denoted by $Inv(\leq \mu _0)$. Moreover, $Inv(\leq \mu _0)=\lim _{n\rightarrow\infty}(f_*)^n(\mu _0)$. 

3) If $f_*(\mu _0)\geq \mu _0$, then the set $\{\mu \in SM^+(X):~ \mu \geq \mu _0,~f_*(\mu )= \mu \}$ is non-empty and has a smallest element, denoted by $Inv(\geq \mu _0)$. 


\label{TheoremInvariantMeasuresShortVersion}
\end{theorem}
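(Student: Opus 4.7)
The plan is to handle parts 1) and 2) by direct computation using monotonicity and super-additivity of $f_*$, and to invoke a Tarski-type fixed point principle for part 3). Throughout I will use that for $\mu\in SM^+(X)$ one has $\|\mu\|=\mu(1)$ and, by part ii) of Theorem \ref{TheoremSubmeasurePushforwardMeromorphicShortVersion} evaluated at $\varphi=1$, $\|f_*(\mu)\|=\|\mu\|$. Consequently all iterates $(f_*)^k(\mu_0)$ are uniformly bounded in norm and lie in a weak-$*$ compact set.

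For part 1), set $\nu_n:=\tfrac{1}{n+1}\sum_{j=0}^n(f_*)^j(\mu_0)$. Positive homogeneity together with super-additivity (part iii) of Theorem \ref{TheoremSubmeasurePushforwardMeromorphicShortVersion}) yields
\begin{equation*}
f_*(\nu_n)\ \geq\ \nu_n+\tfrac{1}{n+1}\bigl((f_*)^{n+1}(\mu_0)-\mu_0\bigr),
\end{equation*}
whose correction term has norm $O(1/n)$. If $\nu_{n_k}\to\mu$ and, after a further extraction, $f_*(\nu_{n_k})\to\rho$, the display forces $\rho(\varphi)\geq\mu(\varphi)$ for every $\varphi\geq 0$, while part i) of Theorem \ref{TheoremSubmeasurePushforwardMeromorphicShortVersion} gives $\rho\leq f_*(\mu)$. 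Hence $f_*(\mu)\geq\mu$.

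For part 2), monotonicity of $f_*$ and the hypothesis $f_*(\mu_0)\leq\mu_0$ give inductively $(f_*)^{n+1}(\mu_0)\leq(f_*)^n(\mu_0)$. Define
\begin{equation*}
\mu^{*}\ :=\ \sup\bigl\{\chi\in M^+(X)\,:\,\chi\leq(f_*)^n(\mu_0)\text{ for every }n\bigr\},
\end{equation*}
which is a positive strong submeasure. The inequality $\mu^{*}\leq(f_*)^n(\mu_0)$ is immediate. For the reverse, for each $\varphi\in C^0(X)$ choose measures $\chi_n\leq(f_*)^n(\mu_0)$ with $\chi_n(\varphi)$ approaching $(f_*)^n(\mu_0)(\varphi)$ (using part ii) of Theorem \ref{TheoremSubmeasurePushforwardMeromorphicShortVersion}), extract a weak-$*$ cluster point $\chi_\infty$, and note that $\chi_\infty\leq(f_*)^m(\mu_0)$ for every $m$ by passing inequalities to the limit, so that $\chi_\infty$ lies in the collection defining $\mu^{*}$; this proves $\mu^{*}(\varphi)=\lim_n(f_*)^n(\mu_0)(\varphi)$ for each $\varphi$. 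Invariance then follows by squeezing: $\mu^{*}\leq(f_*)^n(\mu_0)$ together with monotonicity gives $f_*(\mu^{*})\leq(f_*)^{n+1}(\mu_0)\to\mu^{*}$, so $f_*(\mu^{*})\leq\mu^{*}$; and part 1) applied to $\mu^{*}$, which is a Cesaro (in fact pointwise) limit of the sequence, yields the reverse. Maximality: any invariant $\mu'\leq\mu_0$ satisfies $\mu'=(f_*)^n(\mu')\leq(f_*)^n(\mu_0)$ for all $n$, so every measure below $\mu'$ lies in the collection defining $\mu^{*}$, whence $\mu'\leq\mu^{*}$.

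For part 3) the monotone iteration $(f_*)^n(\mu_0)$ is now non-decreasing, but its pointwise supremum $\mu$ only satisfies $f_*(\mu)\geq\mu$, not equality, so I invoke Tarski's fixed point theorem. Let $L:=\{\mu\in SM^+(X)\,:\,\mu\geq\mu_0,\ \mu(1)=\mu_0(1)\}$; every invariant $\mu\geq\mu_0$ lies in $L$ because $\|f_*(\mu)\|=\|\mu\|$ forces the mass to be preserved. The set $L$ is a complete lattice: the pointwise supremum of a family stays sublinear, dominates $\mu_0$ and has the right value at $\varphi=1$; the infimum is the largest positive strong submeasure dominated by all members of the family, which contains $\mu_0$ itself and so lies above $\mu_0$. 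The map $f_*$ sends $L$ into $L$ and is order-preserving. The top of $L$, given by $\top(\varphi):=\mu_0(1)\cdot\max_X\varphi$, is in $SM^+(X)$, dominates every element of $L$ (for $\nu\in L$, $\nu(\varphi)\leq\nu(\max_X\varphi\cdot 1)=\max_X\varphi\cdot\mu_0(1)$) and satisfies $f_*(\top)\leq\top$, so the set $\{\mu\in L:f_*(\mu)\leq\mu\}$ is non-empty. Tarski's theorem then produces its infimum as a fixed point, and as the smallest fixed point in $L$ it is the desired $Inv(\geq\mu_0)$. The principal obstacle is this last step: unlike in part 2), no concrete monotone sequence starting from $\mu_0$ need converge to an invariant submeasure, and one is forced into an order-theoretic argument together with the nontrivial verification that $SM^+(X)$ carries the right lattice structure preserved by $f_*$.
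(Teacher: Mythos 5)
Your parts 1) and 2) are correct and essentially reproduce the paper's argument: part 1) is the same super-additivity computation combined with part i) of Theorem \ref{TheoremSubmeasurePushforwardMeromorphicShortVersion} (note the resulting inequality $\rho\geq\mu$ holds at every $\varphi\in C^0(X)$, not only $\varphi\geq 0$, which is what the order on $SM^+(X)$ requires), and part 2) differs only cosmetically: the paper takes the pointwise limit of the decreasing sequence $(f_*)^n(\mu_0)$ directly and obtains $\mu^*\leq f_*(\mu^*)$ from part 3) of Theorem \ref{TheoremSubmeasurePushforwardMeromorphic} applied to $f_*(\mu_n)=\mu_{n+1}$, rather than via your detour through Cesaro averages; your identification of the limit as a supremum of measures is a harmless variant.

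Part 3) is where you diverge from the paper, and there is a genuine gap. You restrict to the lattice $L=\{\mu\in SM^+(X):\mu\geq\mu_0,\ \mu(1)=\mu_0(1)\}$ and justify this by asserting that every invariant $\mu\geq\mu_0$ lies in $L$ because ``$\|f_*(\mu)\|=\|\mu\|$ forces the mass to be preserved.'' That is a non sequitur: mass preservation under $f_*$ relates $\|f_*(\mu)\|$ to $\|\mu\|$, not $\|\mu\|$ to $\|\mu_0\|$, and $\mu\geq\mu_0$ only gives $\mu(1)\geq\mu_0(1)$. The inclusion is in fact false: take $f$ the identity and $\mu=\max\{\mu_0,c\delta_x\}$ for large $c$; this is an $f$-invariant positive strong submeasure dominating $\mu_0$ with mass $c>\mu_0(1)$. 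Consequently Knaster--Tarski only yields the least fixed point $p$ of $f_*$ inside $L$, and you have not shown $p\leq\mu$ for the invariant elements of larger mass that the theorem's set also contains, so the claimed minimality is not established. The gap is repairable within your framework: given any invariant $\mu\geq\mu_0$, let $w$ be the greatest positive strong submeasure dominated by both $\mu$ and $\top$ (the supremum of all positive measures below both); then $w\in L$ and $f_*(w)\leq w$, since $f_*(w)\leq f_*(\mu)=\mu$ and $f_*(w)\leq f_*(\top)\leq\top$, so the Tarski formula $p=\inf\{x\in L:\ f_*(x)\leq x\}$ gives $p\leq w\leq\mu$. For comparison, the paper avoids the issue by defining $Inv(\geq\mu_0)$ directly as the supremum of all positive measures lying below every element of the full set of invariant submeasures $\geq\mu_0$, proving $f_*(Inv(\geq\mu_0))\leq Inv(\geq\mu_0)$ via part 5) of Theorem \ref{TheoremSubmeasurePushforwardMeromorphic}, and then running the decreasing iteration of part 2) to produce an invariant element that coincides with this greatest lower bound; your route is more purely order-theoretic but needs the extra comparison step above to be complete.
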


The {\bf third and last main idea} of this paper, to deal with definitions of entropy, is as follows. Assume that $f:X\dashrightarrow X$ is a continuous open-dense defined selfmap. To define topological entropy for a continuous open-dense defined selfmap $f:X\dashrightarrow X$, we will follow the idea by S. Friedland. We will look at the closure $\Gamma _{f,\infty}$ of the graph of $f:OpenDom(f)\rightarrow X$ in the compact metric space $X^{\mathbb{N}}$. To be able to proceed as such, we need stronger properties of $f$ in order to be able to compose $f$ with itself any finite number of times, in analogy to holomorphic maps or dominant meromorphic maps. We formalise this notion next. 
\begin{definition}
Let $f:X\dashrightarrow Y$ be a continuous open-dense defined map. We say that $f$ is good with respect to iterates if the set $\Omega _{f,\infty}:= \{x\in OpenDom(f):$ $f^n(x)\notin I(f)$ for all $n\in \mathbb{N} \}$ is dense in $X$ and $I_{\infty}(f)=X\backslash \Omega _{f,\infty}$ is nowhere dense. 
\label{DefinitionGoodIterateMaps}\end{definition}
Note that this notion applies to dominant meromorphic selfmaps of compact complex varieties, as well as to continuous maps $f:U\rightarrow U$ where $U\subset X$ is open dense. 

In this case, we can define the infinity graph $\Gamma _{f,\infty}$ as the closure in $X^{\mathbb{N}}$ - with product topology - of $\{(x,f(x),f^2(x),f^{3}(x),\ldots )$ $:~x\in \Omega _{f,\infty}\}$. By Tikhonov's theorem, $X^{\mathbb{N}}$ is compact Hausdorff, and moreover is a compact metric space with the metric
\begin{eqnarray*}
d((x_1,x_2,\ldots ), (y_1,y_2,\ldots )):=\sum _{i=1}^{\infty} d(x_i,y_i), 
\end{eqnarray*}   
where $d(x_i,y_i)$ is the given metric on $X$. Hence $\Gamma _{f,\infty}$ is itself a compact metric space. On $X^{\mathbb{N}}$ there is a natural shifting map $\sigma (x_1,x_2,x_3,\ldots )=(x_2,x_3,\ldots )$, which is continuous. It is easy to check that $\sigma (\Gamma _{f,\infty})=\Gamma _{f,\infty}$, and we denote $\sigma _f:=\sigma |_{\Gamma _{f,\infty}}:~\Gamma _{f,\infty}\rightarrow \Gamma _{f,\infty}$.  Then the topological entropy of $f$ is defined as the topological entropy of $\sigma _f$. We denote this topological entropy $h_{top}(f)$, or when we want to emphasise the role of the compactification $X$ of $U$, we denote by $h_{top, X}(f)$. To define measure theoretic entropies, we note first that there is difficulties if we adapt directly the definition from continuous dynamics (defined that way, the measure theoretic entropies of an invariant positive strong submeasure can be infinity even for maps whose topological entropy is finite). To resolve this, we relate invariant positive strong submeasures on $X$ with invariant positive measures on $\Gamma _{f,\infty}$, through $\pi _1:\Gamma _{f,\infty}\rightarrow X$ the projection to the first coordinate, that is $\pi _1(x_1,x_2,x_3,\ldots )=x_1$.  This way allows us also to prove a version of the Variational Principle. In the case of measures with no mass on $I(f)$, we have the following result. 

\begin{proposition}
Let $f:X\dashrightarrow X$ be a continuous open-dense defined map of a compact metric space which is good with respect to iterates. Let $\mu$ be an invariant probability measure of $f$ with no mass on $I_{\infty}(f)$. Then there exists a unique measure $\hat{\mu}$ on $\Gamma _{f,\infty}$ so that $(\pi _1)_*(\hat{\mu})\leq \mu$, $||\hat{\mu}||=1$ and $(\phi _f)_*(\hat{\mu})=\hat{\mu}$. Moreover,
\begin{eqnarray*}
h_{\mu}(f)=h_{\hat{\mu}}(\phi _f). 
\end{eqnarray*}
 \label{PropositionMotivationMeasureEntropy}\end{proposition}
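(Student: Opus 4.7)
The natural candidate for $\hat{\mu}$ is the pushforward of $\mu$ under the orbit map. Concretely, I would define $\Phi : \Omega_{f,\infty} \to \Gamma_{f,\infty}$ by $\Phi(x) = (x, f(x), f^2(x), \ldots)$; this is well-defined because every iterate $f^n(x)$ avoids $I(f)$, and it is continuous on $\Omega_{f,\infty}$ since each $f^n$ is continuous at every $x \in \Omega_{f,\infty}$. Using $\mu(I_\infty(f)) = 0$, the measure $\hat{\mu} := \Phi_*(\mu|_{\Omega_{f,\infty}})$ is a Borel probability measure on $\Gamma_{f,\infty}$, so $||\hat{\mu}|| = 1$. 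From $\pi_1 \circ \Phi = \mathrm{id}_{\Omega_{f,\infty}}$ one gets $(\pi_1)_*\hat{\mu} = \mu|_{\Omega_{f,\infty}} \leq \mu$. For invariance, the identity $\sigma \circ \Phi = \Phi \circ f$ on $\Omega_{f,\infty}$, together with the fact that for a measure with no mass on $I(f)$ the strong-submeasure pushforward reduces to the ordinary measure-theoretic pushforward (so $f_*\mu = \mu$ means $\int g \circ f \, d\mu = \int g \, d\mu$ for bounded Borel $g$), yields $(\sigma_f)_*\hat{\mu} = \hat{\mu}$ by a change of variables. (I write $\sigma_f$ for the shift denoted $\phi_f$ in the statement.)

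For uniqueness, the crucial geometric point is the identity $\pi_1^{-1}(\Omega_{f,\infty}) = \Phi(\Omega_{f,\infty})$. Indeed, every $\mathbf{y} = (y_1, y_2, \ldots) \in \Gamma_{f,\infty}$ is a limit of orbits $\Phi(x_n)$; if moreover $y_1 \in \Omega_{f,\infty}$, continuity of each $f^k$ at $y_1$ forces $y_k = f^k(y_1)$, and therefore $\mathbf{y} = \Phi(y_1)$. Thus $\pi_1$ and $\Phi$ are mutually inverse measurable bijections between these sets. Now let $\hat{\mu}'$ satisfy the hypotheses. The mass condition $||\hat{\mu}'|| = 1 = ||\mu||$ promotes $(\pi_1)_*\hat{\mu}' \leq \mu$ to an equality of probability measures, and the zero-mass hypothesis then forces $\hat{\mu}'$ to be supported on $\Phi(\Omega_{f,\infty})$. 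Pushing $(\pi_1)_*\hat{\mu}' = \mu$ forward by $\Phi$ recovers $\hat{\mu}' = \Phi_*\mu = \hat{\mu}$.

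Finally, the entropy equality will follow because $\Phi$ realizes a measure-theoretic isomorphism modulo null sets between the invariant systems $(X, \mu, f)$ and $(\Gamma_{f,\infty}, \hat{\mu}, \sigma_f)$, conjugating the dynamics on the full-measure invariant set $\Omega_{f,\infty}$. By the classical invariance of the Kolmogorov--Sinai entropy under such an isomorphism, $h_\mu(f) = h_{\hat{\mu}}(\sigma_f)$. The main obstacle I anticipate is purely organizational: one must verify that any finite Borel partition used to compute $h_\mu(f)$ corresponds, up to $\mu$-null sets, to the $\Phi$-pullback of a Borel partition of $\Gamma_{f,\infty}$, and conversely, so that the suprema defining the two entropies are taken over comparable collections. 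This correspondence is exactly what the hypothesis $\mu(I_\infty(f)) = 0$ supplies, since the discrepancy between $X$ and the domain of $\Phi$ is $\mu$-negligible.
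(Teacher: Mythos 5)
Your proposal is correct and follows essentially the same route as the paper: both arguments hinge on the fact that the mass condition upgrades $(\pi_1)_*\hat{\mu}\leq\mu$ to equality, whence the hypothesis $\mu(I_\infty(f))=0$ concentrates $\hat{\mu}$ on the orbit set $\{(x,f(x),f^2(x),\dots):x\in\Omega_{f,\infty}\}$, on which $\pi_1$ is a homeomorphism with inverse your map $\Phi$, giving uniqueness, invariance, and the entropy equality by measure-theoretic conjugacy. You are somewhat more explicit than the paper on the existence half (constructing $\hat{\mu}=\Phi_*\mu$ directly rather than leaving it implicit), but the underlying idea is identical.
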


\begin{remarks} 

A related work has been given in \cite{truong3} when we defined topological entropy for the case of $f:U\rightarrow U$ for general non-compact topological spaces and not just metric spaces. Since a general topological space may not have a compactification, \cite{truong3} considered all compactifications of \'etale coverings of $f$. The current work can be used to strengthen the results in \cite{truong3}, in that we can now consider, besides topological entropy, other basic ergodic properties such as invariant submeasures and measure-theoretic entropy, as well as can deal with more general maps.      
\end{remarks}

The plan of this paper is as follows. In Section 2 we collect some basic properties of strong submeasures. In particular, we will define and prove properties of pushforward and pullback of positive strong submeasures for open-dense defined maps. In Section 3, we prove existence and some properties of invariant positive strong submeasures. In Section 4, we define and prove properties of topological and measure theoretic entropies for invariant positive strong submeasures, including a version of the Variational Principle. In the last section, we  give some applications to transcendental maps on $\mathbb{C}$ and $\mathbb{C}^2$, as well as to dynamics of dominant meromorphic maps of compact K\"ahler surfaces.  

{\bf Acknowledgments.} This paper is extracted and developed from the more dynamical part of our preprint \cite{truong4}. The part about least negative intersection of positive closed $(1,1)$ currents will be separated into another paper. 
 
\section{Strong submeasures} In this section we collect some basic properties of strong submeasures on a compact metric space $X$ which are needed to establish basic ergodic propeties, and the pushforward of them by continuous maps $f:U\rightarrow Y$, where $U\subset X$ is a dense open subset and $Y$ is another compact metric space. In case the map $f$ is a proper covering map of finite degree to its image $f(U)$, we can also define a pullback operator on positive strong submeasures.   
 
\subsection{Strong submeasures}  Let $X$ be a compact metric space. We recall the notations from the introduction: $M(X)$ the set of signed measures on $X$, $M^+(X)$ the set of positive measures on $X$, $SM(X)$ the set of strong submeasures on $X$ and $SM^+(X)$ the set of positive strong submeasures on $X$.

By a simple application of Hahn-Banach's extension theorem (see \cite{rudin2}) and Riesz representation theorem (see \cite{rudin}), we have the following characterisation, whose proof is left out, of strong submeasures and positive strong submeasures. 
\begin{theorem} Let $X$ be a compact metric space, and $\mu :C^0(X)\rightarrow \mathbb{R}$ an operator. 

1) $\mu $ is a strong submeasure if and only if there is a non-empty collection $\mathcal{G}$ of signed measures $\chi =\chi ^+-\chi ^-$ where $\chi ^{\pm}$ are measures on $X$ so that $\sup _{\chi =\chi ^{+}-\chi ^-\in \mathcal{G}}\chi ^{\pm}(1)<\infty$, and: 
\begin{equation}
\mu (\varphi )=\sup _{\chi \in \mathcal{G}}\chi (\varphi ), 
\label{EquationExampleSubmeasure}\end{equation}
for all continuous functions $\varphi$.

2) $\mu$ is a positive strong submeasure if and only if there is a non-empty collection $\mathcal{G}$ of (positive) measures on $X$ so that $\sup _{\chi \in \mathcal{G}}\chi (1)<\infty$, and:
\begin{equation}
\mu (\varphi )=\sup _{\chi \in \mathcal{G}}\chi (\varphi ),
\label{EquationExamplePositiveSubmeasure}\end{equation}
for all continuous functions $\varphi$. 
\label{TheoremHahnBanach}\end{theorem}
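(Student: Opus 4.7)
The plan is to prove the two equivalences by checking the easy direction directly from the supremum formula, and obtaining the harder direction via a standard Hahn--Banach argument combined with the Riesz representation theorem.

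First I would dispatch the ``if'' direction. If $\mu(\varphi)=\sup_{\chi\in\mathcal{G}}\chi(\varphi)$ for a family $\mathcal{G}$ of signed measures with uniformly bounded positive and negative parts, then positive homogeneity $\mu(\lambda\varphi)=\lambda\mu(\varphi)$ for $\lambda\ge0$ is immediate, and subadditivity follows from $\sup_\chi[\chi(\varphi_1)+\chi(\varphi_2)]\le\sup_\chi\chi(\varphi_1)+\sup_\chi\chi(\varphi_2)$. Boundedness follows from $|\chi(\varphi)|\le(\chi^+(1)+\chi^-(1))\|\varphi\|_{L^\infty}$, uniform in $\chi\in\mathcal{G}$. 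If moreover $\mathcal{G}$ consists of positive measures, then each $\chi$ is non-decreasing, so the supremum $\mu$ is as well.

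For the converse in part 1, given a strong submeasure $\mu$ and a fixed $\varphi_0\in C^0(X)$, I would define a linear functional on the one-dimensional subspace $\mathbb{R}\varphi_0$ by $\ell_{\varphi_0}(t\varphi_0):=t\mu(\varphi_0)$. The key check is $\ell_{\varphi_0}\le\mu$ on this subspace: for $t\ge 0$ this is equality, and for $t<0$ we use that $\mu(0)=0$ (from positive homogeneity with $\lambda=0$) together with subadditivity to get $\mu(-\varphi_0)\ge -\mu(\varphi_0)$, hence $t\mu(\varphi_0)\le\mu(t\varphi_0)$ also for $t<0$. Hahn--Banach then extends $\ell_{\varphi_0}$ to a linear functional $\chi_{\varphi_0}\colon C^0(X)\to\mathbb{R}$ still dominated by $\mu$. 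Using $\chi_{\varphi_0}(\varphi)\le\mu(\varphi)\le\|\mu\|\,\|\varphi\|_{L^\infty}$ and $\chi_{\varphi_0}(\varphi)=-\chi_{\varphi_0}(-\varphi)\ge-\mu(-\varphi)\ge-\|\mu\|\,\|\varphi\|_{L^\infty}$ gives boundedness with norm at most $\|\mu\|$, so by Riesz representation $\chi_{\varphi_0}$ is a signed Radon measure whose Jordan components satisfy $\chi_{\varphi_0}^\pm(1)\le\|\mu\|$. Taking $\mathcal{G}:=\{\chi_{\varphi_0}:\varphi_0\in C^0(X)\}$ gives $\mu(\varphi)\ge\sup_{\chi\in\mathcal{G}}\chi(\varphi)$ by construction, with equality attained at each $\varphi_0$ by the corresponding $\chi_{\varphi_0}$.

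For part 2, the only additional point is positivity of each $\chi_{\varphi_0}$. If $\varphi\ge 0$, monotonicity of $\mu$ together with $\mu(0)=0$ yields $\mu(-\varphi)\le 0$, and since $\chi_{\varphi_0}(-\varphi)\le\mu(-\varphi)\le 0$, we get $\chi_{\varphi_0}(\varphi)=-\chi_{\varphi_0}(-\varphi)\ge 0$. So $\chi_{\varphi_0}$ is a positive linear functional and hence a positive Radon measure by Riesz. The potential obstacle I was most wary of is precisely this last step --- showing the Hahn--Banach extension is automatically positive in the monotone case --- but it is painless once one notices that monotonicity forces $\mu$ to take non-positive values on non-positive functions, which combined with $\chi_{\varphi_0}\le\mu$ immediately delivers positivity of $\chi_{\varphi_0}$ on non-negative functions.
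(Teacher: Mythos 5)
Your proof is correct and is precisely the argument the paper has in mind: the paper omits the proof, stating only that the theorem follows from "a simple application of Hahn-Banach's extension theorem and Riesz representation theorem," and your one-dimensional-subspace extension of $\ell_{\varphi_0}(t\varphi_0)=t\mu(\varphi_0)$ dominated by the sublinear functional $\mu$, followed by Riesz and the positivity check $\chi_{\varphi_0}(\varphi)=-\chi_{\varphi_0}(-\varphi)\geq -\mu(-\varphi)\geq 0$ for $\varphi\geq 0$, is exactly that standard argument carried out in full. All the steps, including the verification $\mu(0)=0$ and $\mu(-\varphi_0)\geq -\mu(\varphi_0)$ needed for domination on the line $\mathbb{R}\varphi_0$, are sound.
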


The next paragraphs discuss the natural topology on the space of strong submeasures. 

\begin{definition} We say that a sequence $\mu _1,\mu _2,\ldots \in SM(X)$ weakly converges to $\mu \in SM(X)$ if $\sup _n||\mu _n||<\infty$ and 
\begin{equation}
\lim _{n\rightarrow \infty}\mu _n(\varphi )=\mu (\varphi )
\label{EquationWeakConvergence}\end{equation} 
for all $\varphi \in C^0(X)$. We use the notation $\mu _n\rightharpoonup \mu$ to denote that $\mu _n$ weakly converges to $\mu$. 
\end{definition}

If $\mu _1,\mu _2:~C^0(X)\rightarrow \mathbb{R}$, we define $\max \{\mu _1,\mu _2\}:~C^0(X)\rightarrow \mathbb{R}$ by the formula $\max \{\mu _1,\mu _2\}(\varphi )=\max \{\mu _1(\varphi ), \mu _2(\varphi )\}$. Theorem \ref{TheoremSubmeasureBasicProperty} shows that submeasures also have properties similar to measures, such as weak compactness.  

For later use, we recall that  for a compact subset $A\subset X$, we have (\cite{doob}):
\begin{equation}
\mu (A)=\inf _{\phi \in C^0(X,\geq 1_A)}\mu (\phi ). 
\label{EquationMeasureClosedSet}\end{equation}
Here $1_A:X\rightarrow \{0,1\}$ is the characteristic function of $A$, that is $1_A(x)=1$ if $x\in A$ and $=0$ otherwise, $C^0(X)$ is the space of continuous functions from $X$ into $\mathbb{R}$, and for any bounded function $g:X\rightarrow \mathbb{R}$ we use the notations 
\begin{equation}
C^0(X, \geq g)=\{\phi \in C^0(X):~\phi \geq g\}. 
\label{EquationContinuousFunctionsBiggerG}\end{equation}
Moreover, for any {\bf open} set $B\subset X$ we have (\cite{doob})
\begin{equation}
\mu (B)=\sup _{A~\mbox{compact}\subset B}\mu (A). 
\label{EquationMeasureOpenSet}\end{equation}

Like measures, {\bf positive} strong submeasures give rise naturally to {\bf set functions}.  On a compact metric space $X$,  recall that a function $g :X\rightarrow \mathbb{R}$ is upper-semicontinuous if for every $x\in X$
\begin{equation}
\limsup _{y\rightarrow x}g (y)\leq g (x).
\label{EquationUpperSemicontinuity}\end{equation}
For example, the characteristic function of a closed subset $A\subset X$ is upper-semicontinuous. By Baire's theorem \cite{baire}, if $g$ is a bounded upper-semicontinuous function on $X$ then the set $C^0(X,\geq g)$ is non-empty and moreover
\begin{eqnarray*}
g=\inf _{\varphi \in C^0(X,\geq g)}\varphi . 
\end{eqnarray*}
More precisely, there is a sequence of continuous functions $g_n$ on $X$ decreasing to $g$. Hence, if $\mu$ is a {\bf measure}, we have by Lebesgue and Levi's monotone convergence theorem in the integration theory that
\begin{eqnarray*}
\mu (g)=\lim _{n\rightarrow\infty}\mu (g_n)=\inf _{\varphi \in C^0(X,\geq g)}\mu (\varphi ). 
\end{eqnarray*}

Inspired by this and (\ref{EquationMeasureClosedSet}), if $\mu$ is an {\bf arbitrary} strong submeasure, we define for any upper-semicontinuous function $g$ on $X$ the value
\begin{equation}
E(\mu) (g):=\inf _{\varphi \in C^0(X,\geq g)}\mu (\varphi )\in [-\infty ,\infty ).
\label{EquationSubmeasureUpperSemicontinuous}\end{equation}
Then for a closed set $A\subset X$, we define $\mu  (A):=E(\mu )(1_{A})$ where $1_A$ is the characteristic function of $A$. If $\mu$ is positive, we always have $\mu (A)\geq 0$. Then, for an open subset $B\subset X$, following (\ref{EquationMeasureOpenSet}) we define $\mu (B):=\sup \{\mu (A):~A$ compact  $\subset B\}$. Denote by $BUS(X)$ the set of all bounded upper-semicontinuous functions on $X$.  Theorem \ref{TheoremSubmeasureUpperSemicontinuous} proves some basic properties of this operator, similar to those of submeasures. 

If we have a positive strong submeasure $\mu$, and define for any Borel set $A\subset X$ the number $\widetilde{\mu}(A)=\inf \{\mu (B):$ $B$ open, $A\subset B\}$, then we see easily from part 4) of Theorem \ref{TheoremSubmeasureUpperSemicontinuous} that: i) $\widetilde{\mu}(\emptyset )=0$, ii) $\widetilde{\mu}(A_1)\leq \widetilde{\mu}(A_2)$ for all Borel sets $A_1\subset A_2$ and iii) 
$\widetilde{\mu}(A_1\cup A_2)\leq \widetilde{\mu}(A_1)+\widetilde{\mu}(A_2)$. Such $\widetilde{\mu}$ are known in the literature as submeasures (see e.g. \cite{talagrand}), and hence it is justified to call our objects $\mu$ positive strong submeasures.  

We have the following basic properties of strong submeasures.  
 
  \begin{theorem} Let $X$ be a compact metric space. 

1) {\bf Weak-compactness.} Let $\mu _1,\mu _2,\ldots $ be a sequence in $SM(X)$ such that $\sup _{n}||\mu _n||<\infty$. Then there is a subsequence $\{\mu _{n(k)}\}_{k=1,2,\ldots }$ which weakly converges to some $\mu \in SM(X)$. If moreover $\mu _n\in SM^+(X)$, then so is $\mu$. 

2) If $\mu \in SM^+(X)$, then $||\mu ||=\max \{|\mu (1)|,|\mu (-1)|\}$. 

3) If $\mu _1,\mu _2\in SM(X)$ then $\max \{\mu _1,\mu _2\}$ and $\mu _1+\mu _2$ are also in $SM(X)$.  If $\mu _1,\mu _2\in SM^+(X)$ then $\max \{\mu _1,\mu _2\}$ and $\mu _1+\mu _2$ are also in $SM^+(X)$.
\label{TheoremSubmeasureBasicProperty}\end{theorem}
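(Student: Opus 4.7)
\medskip

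\noindent\textbf{Proof plan.} All three parts reduce to straightforward verifications once one exploits (i) separability of $C^0(X)$ (since $X$ is a compact metric space) and (ii) the uniform Lipschitz property $|\mu(\varphi_1)-\mu(\varphi_2)|\le \|\mu\|\cdot\|\varphi_1-\varphi_2\|_{L^\infty}$ already recorded in the introduction.

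For part (1), the plan is to imitate the separable Banach--Alaoglu argument. Fix a countable dense subset $\{\varphi_j\}_{j\ge 1}\subset C^0(X)$ and set $C:=\sup_n\|\mu_n\|<\infty$. For each fixed $j$ the scalar sequence $\{\mu_n(\varphi_j)\}_n$ lies in the bounded interval $[-C\|\varphi_j\|_{L^\infty},C\|\varphi_j\|_{L^\infty}]$, so a standard diagonal extraction produces a subsequence $\mu_{n(k)}$ such that $\mu_{n(k)}(\varphi_j)$ converges for every $j$. For an arbitrary $\varphi\in C^0(X)$, given $\varepsilon>0$ pick $\varphi_j$ with $\|\varphi-\varphi_j\|_{L^\infty}<\varepsilon/(3C)$; then the uniform Lipschitz bound and the Cauchy property at $\varphi_j$ give an $\varepsilon/3$ argument showing $\{\mu_{n(k)}(\varphi)\}_k$ is Cauchy in $\R$. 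Define $\mu(\varphi):=\lim_k\mu_{n(k)}(\varphi)$. Sub-linearity, positive homogeneity, the bound $|\mu(\varphi)|\le C\|\varphi\|_{L^\infty}$, and monotonicity (when $\mu_n\in SM^+(X)$) then pass to the limit termwise, so $\mu\in SM(X)$ (resp.\ $SM^+(X)$).

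For part (2), the idea is to sandwich $\varphi$ between the constants $\pm\|\varphi\|_{L^\infty}$. Indeed, from $-\|\varphi\|_{L^\infty}\le\varphi\le\|\varphi\|_{L^\infty}$ and monotonicity of $\mu$, followed by positive homogeneity applied to the non-negative scalar $\|\varphi\|_{L^\infty}$, I obtain
\[
\|\varphi\|_{L^\infty}\,\mu(-1)\;\le\;\mu(\varphi)\;\le\;\|\varphi\|_{L^\infty}\,\mu(1),
\]
so $|\mu(\varphi)|\le \max\{|\mu(1)|,|\mu(-1)|\}\,\|\varphi\|_{L^\infty}$ and hence $\|\mu\|\le\max\{|\mu(1)|,|\mu(-1)|\}$. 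The reverse inequality follows by taking $\varphi=\pm 1$ in the definition of $\|\mu\|$.

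For part (3), the claims for $\mu_1+\mu_2$ are immediate: sub-linearity, positive homogeneity, and monotonicity are preserved by sums, and $\|\mu_1+\mu_2\|\le\|\mu_1\|+\|\mu_2\|$ by the triangle inequality. For $\max\{\mu_1,\mu_2\}$, positive homogeneity follows because $\max$ commutes with multiplication by a non-negative scalar; boundedness follows from $|\max\{\mu_1(\varphi),\mu_2(\varphi)\}|\le\max\{\|\mu_1\|,\|\mu_2\|\}\|\varphi\|_{L^\infty}$; monotonicity is obvious from the monotonicity of $\max$ in each slot; and sub-linearity uses that for each $i\in\{1,2\}$,
\[
\mu_i(\varphi+\psi)\;\le\;\mu_i(\varphi)+\mu_i(\psi)\;\le\;\max\{\mu_1,\mu_2\}(\varphi)+\max\{\mu_1,\mu_2\}(\psi),
\]
so taking the maximum over $i$ on the left preserves the inequality.

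The only non-trivial step is the diagonal extraction in part (1), and even there the main content is simply translating separability of $C^0(X)$ plus the uniform Lipschitz bound into weak sequential compactness on the norm-bounded set; the rest of the theorem is a bookkeeping exercise in sub-linearity and monotonicity.
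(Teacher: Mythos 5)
Your proposal is correct and follows essentially the same route as the paper: part (1) is the identical separability-plus-diagonal-extraction argument using the uniform Lipschitz bound, part (2) is the same sandwich between $\pm\|\varphi\|_{L^\infty}$ combined with positive homogeneity, and part (3) spells out the termwise verifications that the paper dismisses as obvious. No gaps.
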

 \begin{proof}[Proof of Theorem \ref{TheoremSubmeasureBasicProperty}]

1) Since $X$ is a compact metric space, the space $C^0(X)$, equipped with the $L^{\infty}$ norm,  is separable. Therefore, there is a countable set $\varphi _1,\varphi _2,\ldots $ which is dense in $C^0(X)$. Because $\sup _{n}||\mu _n||=C<\infty$, for each $j$ the sequence $\{\mu _n(\varphi _j)\}_{n=1,2,\ldots }$ is bounded. Therefore, using the diagonal argument, we can find a subsequence $\{\mu _{n(k)}\}_{k=1,2,\ldots }$  so that for all $j$ the following limit exists:
\begin{eqnarray*}
\lim _{k\rightarrow\infty}\mu _n(\varphi _j)=: \mu (\varphi _j).
\end{eqnarray*}
As observed in the introduction, the fact that $\mu _n$ is sublinear and bounded implies that it is Lipschitz continuous: $|\mu _n(\varphi )-\mu _n(\psi )|\leq ||\mu _n||\times ||\varphi -\psi ||\leq C||\varphi -\psi ||$ for all $n$ and all $\varphi ,\psi \in C^0(X)$.  Then from the fact that $\{\varphi _j\}_{j=1,2,\ldots }$ is dense in $C^0(X)$, it follows that for all $\varphi \in C^0(X)$, the following limit exists: 
 \begin{eqnarray*}
\lim _{k\rightarrow\infty}\mu _n(\varphi )=: \mu (\varphi ).
\end{eqnarray*}
It is then easy to check that $\mu $ is also a strong submeasure, and if $\mu _n$ are all positive then so is $\mu$.

2) For any $\varphi \in C^0(X)$ we have $-||\varphi ||_{L^{\infty}}\leq \varphi \leq ||\varphi ||_{L^{\infty}}$. Therefore, since $\mu $ is positive, we have $\mu (-||\varphi ||_{L^{\infty}})\leq \mu (\varphi )\leq \mu (||\varphi ||_{L^{\infty}})$. By the sub-linearity of $\mu$, we have $\mu (-||\varphi ||_{L^{\infty}})=||\varphi ||_{L^{\infty}}\mu (-1)$ and $\mu (||\varphi ||_{L^{\infty}})=||\varphi ||_{L^{\infty}}\mu (1)$. Therefore, $|\mu (\varphi )|\leq ||\varphi ||_{L^{\infty}}\max \{|\varphi (-1)|,|\varphi (1)|\}$. Therefore, $||\mu ||\leq \max\{|\varphi (-1)|, |\varphi (1)|\}$. The reverse inequality follows from the fact that $||-1||_{L^{\infty}}=||1||_{L^{\infty}}=1$. 

3) This is obvious. 
\end{proof}

\begin{theorem} Let $X$ be a compact metric space and $\mu \in SM(X)$. Let $E(\mu ):BUS(X)\rightarrow [-\infty , \infty )$ be defined as in (\ref{EquationSubmeasureUpperSemicontinuous}).  Assume that $E(\mu )(0)$ is finite. We have: 

1) For all $\varphi \in BUS(X)$, the value $E(\mu )(\varphi )$ is finite. Moreover, $E(\mu )(0)=0$ and $E(\mu )(-1)\geq -\mu (1)$. 

2) {\bf Extension.} If $\mu$ is {\bf positive}, then for all $\varphi \in C^0(X)$ we have $E(\mu )(\varphi )=\mu (\varphi )$.  

3) Moreover, $E(\mu )$ satisfies the following properties 

 i) {\bf Sub-linearity.} $E(\mu) (\varphi _1+\varphi _2)\leq E(\mu )(\varphi _1)+E(\mu) (\varphi _2)$ and $E(\mu )(\lambda \varphi )=\lambda E(\mu )(\varphi )$ for $\varphi _1,\varphi _2,\varphi \in BUS(X)$ and a non-negative constant $\lambda$. 

ii ) {\bf Positivity.} $E(\mu )(\varphi _1)\geq E(\mu )(\varphi _2)$ for all $\varphi _1,\varphi _2\in BUS(X)$ satisfying $\varphi _1\geq \varphi _2$. 

iii) {\bf Boundedness.} There is a constant $C>0$ so that for all $\varphi \in BUS(X)$ we have $|E(\mu )(\varphi )|\leq C||\varphi ||_{L^{\infty}}$. The least such constant $C$ is in fact $||\mu ||$.

4) If $A_1,A_2$ are closed subsets of $X$ then $\mu (A_1\cup A_2)\leq \mu (A_1)+\mu (A_2)$. Likewise, if $B_1,B_2$ are open subsets of $X$ then $\mu (B_1\cup B_2)\leq \mu (B_1)+\mu (B_2)$.  
\label{TheoremSubmeasureUpperSemicontinuous}\end{theorem}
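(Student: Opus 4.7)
The plan is to leverage two ingredients throughout: the defining formula $E(\mu)(\varphi)=\inf_{\psi\in C^0(X,\geq\varphi)}\mu(\psi)$, and the observation that the hypothesis ``$E(\mu)(0)$ finite'' forces $\mu(\psi)\geq 0$ for every continuous $\psi\geq 0$. Indeed, if there were $\psi_0\geq 0$ with $\mu(\psi_0)<0$, then by the positive-homogeneity part of sub-linearity $\mu(n\psi_0)=n\mu(\psi_0)\to -\infty$, contradicting finiteness. In particular $E(\mu)(0)=0$, and this non-negativity on non-negative continuous functions is available throughout the proof.

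For part 1, I will set $M=||\varphi||_{L^{\infty}}$ and use the continuous majorant $\psi\equiv M$ to obtain the upper bound $E(\mu)(\varphi)\leq M\mu(1)<\infty$. For a matching lower bound, for any $\psi\in C^0(X,\geq\varphi)$ I use $\psi+M\geq \varphi+M\geq 0$, so the non-negativity above gives $\mu(\psi+M)\geq 0$; sub-additivity of $\mu$ then yields $0\leq \mu(\psi+M)\leq \mu(\psi)+M\mu(1)$, i.e.\ $\mu(\psi)\geq -M\mu(1)$. The inequality $E(\mu)(-1)\geq -\mu(1)$ is precisely the case $\varphi=-1$, $M=1$.

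For parts 2 and 3, the extension $E(\mu)(\varphi)=\mu(\varphi)$ on $C^0(X)$ for positive (monotone) $\mu$ is immediate: $\leq$ by testing with $\psi=\varphi$, and $\geq$ by monotonicity of $\mu$ applied to any $\psi\geq \varphi$. Sub-linearity of $E(\mu)$ follows because for $\psi_i\in C^0(X,\geq \varphi_i)$ one has $\psi_1+\psi_2\geq \varphi_1+\varphi_2$ together with $\mu(\psi_1+\psi_2)\leq \mu(\psi_1)+\mu(\psi_2)$, and taking infima over $\psi_1,\psi_2$ separately delivers $E(\mu)(\varphi_1+\varphi_2)\leq E(\mu)(\varphi_1)+E(\mu)(\varphi_2)$; positive homogeneity comes from the substitution $\psi=\lambda\psi'$ combined with $\mu(\lambda\psi')=\lambda\mu(\psi')$. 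Monotonicity of $E(\mu)$ is just the inclusion $C^0(X,\geq \varphi_1)\subseteq C^0(X,\geq \varphi_2)$ when $\varphi_1\geq \varphi_2$. For boundedness iii), the two estimates from part 1 give $|E(\mu)(\varphi)|\leq \mu(1)\cdot ||\varphi||_{L^{\infty}}\leq ||\mu||\cdot ||\varphi||_{L^{\infty}}$, and for positive $\mu$, Theorem \ref{TheoremSubmeasureBasicProperty}(2) combined with $E(\mu)(1)=\mu(1)$ shows that the constant $||\mu||$ is actually attained.

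The main obstacle I expect is part 4, and specifically the open-set case, for which the closed-set inequality is merely a stepping stone. For closed sets, $1_{A_1\cup A_2}\leq 1_{A_1}+1_{A_2}$ as upper-semicontinuous functions, so the monotonicity and sub-linearity of $E(\mu)$ just established give $\mu(A_1\cup A_2)\leq \mu(A_1)+\mu(A_2)$. For open $B_1,B_2$, the defining inner regularity reduces the claim to showing $\mu(A)\leq \mu(B_1)+\mu(B_2)$ for every compact $A\subset B_1\cup B_2$, and the real difficulty is that $A$ itself need not split as $A_1\cup A_2$ with each $A_i$ compact inside $B_i$. I will resolve this via a metric partition: set $\phi_i(x)=d(x,X\setminus B_i)$ and let $A_1=\{x\in A:\phi_1(x)\geq \phi_2(x)\}$, $A_2=\{x\in A:\phi_2(x)\geq \phi_1(x)\}$. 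Both are compact by continuity of $\phi_i$, and $A=A_1\cup A_2$. For any $x\in A$, the inclusion $x\in B_1\cup B_2$ forces $\phi_1(x)+\phi_2(x)>0$, and combined with the defining inequality for $A_i$ this yields $\phi_i(x)>0$, i.e.\ $A_i\subset B_i$. Applying the closed-set inequality to $A=A_1\cup A_2$ and passing to the supremum over such $A$ then concludes $\mu(B_1\cup B_2)\leq \mu(B_1)+\mu(B_2)$.
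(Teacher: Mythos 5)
Your proof is correct and follows the same overall strategy as the paper's: both hinge on the observation that finiteness of $E(\mu)(0)$ forces $\mu(\psi)\geq 0$ for every continuous $\psi\geq 0$, both prove parts 2) and 3) by the same infimum manipulations, and both reduce the open-set case of 4) to splitting a compact $A\subset B_1\cup B_2$ as $A_1\cup A_2$ with $A_i$ compact in $B_i$. You differ in two local constructions, each arguably cleaner than the paper's. In part 1) you get finiteness from a direct two-sided bound $-M\mu(1)\leq E(\mu)(\varphi)\leq M\mu(1)$ (using $\psi+M\geq 0$), whereas the paper bootstraps: it first proves finiteness on nonnegative functions, then invokes the not-yet-stated sub-linearity of $E(\mu)$ (with a parenthetical remark that the proof of 3i) remains valid when one value is finite) and the argument of 3iii); your route avoids that forward reference entirely. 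In part 4) you produce the decomposition via the distance functions $\phi_i(x)=d(x,X\setminus B_i)$ and the sets $\{\phi_1\geq\phi_2\}$, $\{\phi_2\geq\phi_1\}$, while the paper covers $A$ by finitely many closed balls each contained in $B_1$ or $B_2$ and groups them; both are valid, yours requiring no compactness-based covering argument. One caution: for 3iii) your estimate yields the constant $\mu(1)$, which equals $\|\mu\|$ when $\mu$ is positive (via Theorem \ref{TheoremSubmeasureBasicProperty}) but may be strictly smaller than $\|\mu\|$ for a general sub-linear $\mu$ satisfying the hypothesis; you correctly restrict the sharpness claim to positive $\mu$, which is the only case where it is clearly justified, so this is a limitation of the statement rather than of your argument.
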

\begin{proof}[Proof of Theorem \ref{TheoremSubmeasureUpperSemicontinuous}]
1) We first observe that for all $\varphi \in C^0(X,\geq 0)$ then $\mu (\varphi )\geq 0$. (Note that as observed in Section 1, this fact alone does not imply that $\mu$ is a positive strong submeasure.) In fact, otherwise, there would be $\varphi _0\in C^0(X,\geq 0)$ so that $\mu (\varphi _0)<0$. Then by the definition of $E(\mu )$ and sublinearity of $\mu$ we have
\begin{eqnarray*}
E(\mu )(0)\leq \inf _{n\in \mathbb{N}}\mu (n\varphi _0)=\inf _{n\in \mathbb{N}}n\mu (\varphi _0)=-\infty , 
\end{eqnarray*}
which is a contradiction with the assumption that $E(\mu )(0)$ is finite. 

Therefore, if $\varphi \in C^0(X,\geq 0)$, we obtain
\begin{eqnarray*}
0\leq \inf _{\psi \in C^0(X,\geq \varphi )}\mu (\psi )\leq \mu (\varphi ).
\end{eqnarray*}
Therefore, for these functions $\varphi$ we have $E(\mu )(\varphi )$ is a finite number. In particular, $0\leq E(\mu )(1)\leq \mu (1)$.  

Next, we observe that if $\varphi _1,\varphi _2\in BUS(X)$ such that either $E(\mu )(\varphi _1)$ or $E(\mu )(\varphi _2)$ is finite, then the proof of 3i) is still valid and gives $E(\mu )(\varphi _1+\varphi _2)\leq E(\mu )(\varphi _1)+E(\mu )(\varphi _2)$. Apply this sub-linearity to $\varphi _1=\varphi _2=0$ we obtain $E(\mu )(0)=E(\mu) (0+0)\leq 2E(\mu )(0)$, which implies $E(\mu )(0)\geq 0$. On the other hand, $E(\mu )(0)\leq \mu (0)=0$. Therefore, $E(\mu )(0)=0$. 
 
Since $E(\mu )(1)$ is finite, applying the above sub-linearity for $\varphi _1=1$ and $\varphi _2=-1$, we obtain $0=E(0)=E(\mu )(1+(-1))\leq E(\mu )(1)+E(\mu )(-1)$. Therefore, $E(\mu )(-1)\geq -E(\mu )(1)\geq -\mu (1)$.  

Finally, applying the proof of part 3iii) we deduce that for all $\varphi \in BUS(X)$, the number $E(\mu )(\varphi )$ is finite. 

2) Let $\varphi \in C^0(X)$, and choose any $\psi \in C^0(X,\geq \varphi )$. Since $\mu $ is positive, we have by definition that $\mu (\psi )\geq \mu (\varphi )$. Since $\varphi $ is itself contained in $C^0(X,\geq \varphi )$, it follows that 
\begin{eqnarray*}
E(\mu )(\varphi )=\inf _{\psi \in C^0(X,\geq \varphi )}\mu (\psi )=\mu (\varphi ). 
\end{eqnarray*}

3) Let $\varphi , \varphi _1,\varphi _2\in BUS(X)$. 

i) If $\psi _1\in C^0(X,\geq \varphi _1)$ and $\psi _2\in C^0(X,\geq \varphi _2)$ then $\psi _1+\psi _2\in C^0(X,\geq \varphi _1+\varphi _2)$. Therefore, by sub-linearity of $\mu$:
\begin{eqnarray*}
E(\mu )(\varphi _1+\varphi _2)=\inf _{\psi \in C^0(X,\geq \varphi _1+\varphi _2)}\mu (\psi )\leq \mu (\psi _1 + \psi _2)\leq \mu (\psi _1)+\mu (\psi _2). 
\end{eqnarray*}
We can choose $\psi _1$ and $\psi _2$ so that $\mu (\psi _1)$ is arbitrarily close to $E(\mu )(\varphi _1)$ and $\mu (\psi _2)$ is arbitrarily close to $E(\mu )(\varphi _2)$, and from that obtain the desired conclusion $E(\mu )(\varphi _1+\varphi _2)\leq E(\mu )(\varphi _1)+E(\mu )(\varphi _2)$. The other part of i) is easy to check. 

ii) If $\varphi _1\geq \varphi _2$ then $C^0(X,\geq \varphi _1)\subset C^0(X,\geq \varphi _2)$. From this the conclusion follows. 

iii)  We observe that we can find $\psi \in C^0(X,\geq \varphi )$ so that $||\psi ||_{L^{\infty}}=||\varphi ||_{L^{\infty}}$, simply by defining $\psi =\max\{\min \{\psi _0, ||\varphi ||_{L^{\infty}}\},-||\varphi ||_{L^{\infty}}\} $ for any $\psi _0\in C^0(X,\geq \varphi )$. Then $$E(\mu )(\varphi )\leq \mu (\psi )\leq ||\mu ||\times ||\psi ||_{L^{\infty}}=||\mu ||\times ||\varphi ||_{L^{\infty}}.$$ 
By the positivity of $E(\mu )$ in ii), we have $$E(\mu )(\varphi )\geq ||\varphi  ||_{L^{\infty}}E(\mu )(-1 ),$$
and hence $|E(\mu )(\varphi )|\leq \max \{|E(\mu )(-1)|,||\mu ||\}=||\mu ||$. In the last equality we used that 1) and positivity imply $-\mu (1)\leq E(\mu ) (-1)\leq E(\mu )(0)=0$.   

4) By definition we have for closed subsets $A_1,A_2\subset X$ 
\begin{eqnarray*}
\mu (A)= E(\mu )(1_{A_1\cup A_2})\leq E(\mu )(1_{A_1}+1_{A_2})\leq E(\mu )(1_{A_1})+E(\mu )(1_{A_2})=\mu (A_1)+\mu (A_2). 
\end{eqnarray*}
In the first inequality we used $1_{A_1\cup A_2}\leq 1_{A_1}+1_{A_2}$ and the positivity of $E(\mu )$. In the second inequality we used the sub-linearity of $E(\mu )$. 

If $B_1,B_2$ are open subsets of $X$ and $A\subset B_1\cup B_2$ is closed in $X$, then since $X$ is compact metric we can find closed subsets $A_1,A_2$ of $X$ so that $A_1\subset B_1$,  $A_2\subset B_2$ and $A_1\cup A_2=A$. To this end, for each $x\in A$, we choose an open ball $B(x,r_x)$ (in the given metric on $X$) where $r_x>0$ is chosen as follows: if $x\in B_1$ then the closed ball $\overline{B(x,r_x)}$ belongs to $B_1$, if $x\in B_2$ then the closed ball $\overline{B(x,r_x)}$ belongs to $B_2$, and if $x\in B_1\cap B_2$ then the closed ball $\overline{B(x,r_x)}$ belongs to $B_1\cap B_2$. Since $A$ is compact, there is a finite number of such balls covering $A$: $A\subset \bigcup _{i=1}^mB(x_i,r_i)$. Then the choice of $A_1=A\cap (\bigcup _{x_i\in B_1} \overline{B(x_i,r_i)})$  and $A_2=A\cap (\bigcup _{x_i\in B_2} \overline{B(x_i,r_i)})$ satisfies the requirement. Then from the above sub-linearity of $\mu $ for compact sets and the definition, we have also sub-linearity for open sets $\mu (B_1\cup B_2)\leq \mu (B_1)+\mu (B_2)$.   

\end{proof}

\subsection{Pushforward of positive strong submeasures} Throughout this subsection, we let $f:X\dashrightarrow Y$ be a continuous open-dense defined map, where $X$ and $Y$ are compact metric space. In this subsection we discuss several results concerning pushforward of positive strong submeasures on $X$ by $f$. 

First, we recall that to pushforward a measure by a {\bf continuous} map $g:X\rightarrow Y$ is the same as pullback continuous functions by $f$. In fact, the pushforward $f_*(\mu )$ of the measure $\mu$ is a measure which acts on continuous functions $\varphi :X\rightarrow \mathbb{R}$ by $f_*(\mu )(\varphi ):= \mu (f^*\varphi )$, where $f^*\varphi :X\rightarrow \mathbb{R}$ is the composition $\varphi \circ f$ and hence is also continuous. 

Now we consider the more general setting of a continuous open-dense defined map $f$ as before. We would like, as in the above paragraph, to define the pullback $f^*(\varphi )$ of a continuous function $\varphi :Y\rightarrow \mathbb{R}$. However, in general there is no reasonable way to define $f^*(\varphi )$ as a continuous function, because of the existence of  indeterminacy points. In general, $f^*(\varphi )$ is only continuous on $OpenDom(f)$, and cannot be extended to a continuous function on the whole of $X$. Fortunately, there is a canonical way to define the pullback $f^*(\varphi )$ as an upper-semicontinuous function, using only the value of $f^*(\varphi )$ on the open dense set where it is continuous. The key to this is the next result. 
\begin{proposition}
Let $X$ be a compact metric space, $U\subset X$ an open dense set, and $g:~U\rightarrow \mathbb{R}$ a bounded upper semicontinuous function. Define $E(g):X\rightarrow \mathbb{R}$ as follows: If $x\in U$ then $E(g)(x):=g(x)$, and if $x\in X\backslash U$ then 
\begin{eqnarray*}
E(g)(x):=\limsup _{y\in U,~y\rightarrow x}g(y).
\end{eqnarray*} 
Then 

1) $E(g)$ is a bounded upper-semicontinuous function, and $E(g)|_U=g$. In other words, $E(g)$ is a bounded upper-semicontinuous extension of $g$. 

2) If $g$ is continuous on $U$, $U_1\subset U$ is another open dense set of $X$ and $g_1=g|_{U_1}$, then $E(g_1)=E(g)$. 

3) Moreover, $E(g_1+g_2)\leq E(g_1)+E(g_2)$ for any $g_1,g_2:U\rightarrow \mathbb{R}$ bounded upper-semicontinuous functions. 
\label{PropositionUpperSemicontinuousExtension}\end{proposition}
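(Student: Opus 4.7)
The plan is to verify parts 1), 2), 3) directly from the defining formula using elementary $\limsup$ estimates; no additional machinery is needed.

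For part 1), the equality $E(g)|_U = g$ is immediate from the definition. Boundedness of $E(g)$ on $X \setminus U$ follows because $E(g)(x) = \limsup_{y \in U, y \to x} g(y)$ is bounded above by $\sup_U g$ and below by $\inf_U g$ (the set of $y \in U$ converging to $x$ is nonempty since $U$ is dense). For upper semicontinuity at $x \in U$ the result is inherited from the upper semicontinuity of $g$ on the open set $U$. For upper semicontinuity at $x \in X \setminus U$, fix $\varepsilon > 0$ and choose an open neighborhood $V$ of $x$ small enough that $g(y) < E(g)(x) + \varepsilon$ for every $y \in V \cap U$. Then for any $z \in V$: if $z \in U$ then $E(g)(z) = g(z) < E(g)(x) + \varepsilon$; if $z \in V \setminus U$, there is a smaller neighborhood $V' \subset V$ of $z$, and $E(g)(z) = \limsup_{y \in U, y \to z} g(y) \le \sup_{y \in V' \cap U} g(y) \le E(g)(x) + \varepsilon$. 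Thus $\limsup_{z \to x} E(g)(z) \le E(g)(x)$.

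For part 2), observe first that $U_1$ is open and dense in $X$ because it is dense in $U$, which is dense in $X$. I split into three cases. On $U_1$ both functions equal $g$. For $x \in U \setminus U_1$, the value $E(g)(x) = g(x)$ by the definition, while $E(g_1)(x) = \limsup_{y \in U_1, y \to x} g(y)$; by continuity of $g$ at $x \in U$ together with the density of $U_1$ in $X$, this $\limsup$ equals $g(x)$. For $x \in X \setminus U$, the inclusion $U_1 \subset U$ yields $E(g_1)(x) \le E(g)(x)$ at once. For the reverse inequality, pick a sequence $y_n \in U$ with $y_n \to x$ and $g(y_n) \to E(g)(x)$; by continuity of $g$ at each $y_n$ (which lies in the open set $U$ on which $g$ is continuous) and density of $U_1$, choose $z_n \in U_1$ with $d(z_n, y_n) < 1/n$ and $|g(z_n) - g(y_n)| < 1/n$; then $z_n \to x$ and $g(z_n) \to E(g)(x)$, giving $E(g_1)(x) \ge E(g)(x)$.

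Part 3) is immediate from subadditivity of $\limsup$: on $U$ both sides equal $g_1 + g_2$, and for $x \in X \setminus U$,
\begin{eqnarray*}
E(g_1 + g_2)(x) = \limsup_{y \in U, y \to x}(g_1(y) + g_2(y)) \le \limsup_{y \in U, y \to x} g_1(y) + \limsup_{y \in U, y \to x} g_2(y) = E(g_1)(x) + E(g_2)(x).
\end{eqnarray*}

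The argument presents no real obstacles; the only slightly delicate point is part 2), where one must correctly use the continuity hypothesis on $g$ to pass from limits along $U$ to limits along the smaller dense set $U_1$, so the bookkeeping of the three cases $x \in U_1$, $x \in U \setminus U_1$, $x \in X \setminus U$ is where I would be most careful.
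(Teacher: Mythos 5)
Your proof is correct and follows essentially the same route as the paper: direct $\limsup$ estimates for parts 1) and 3), and the three-case analysis (on $U_1$, on $U\setminus U_1$, off $U$) using continuity of $g$ plus density of $U_1$ for part 2). The only difference is cosmetic — the paper argues upper semicontinuity at points of $X\setminus U$ via sequences while you use neighborhoods, and you spell out the details of part 2) that the paper leaves as "easy to check."
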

{\bf Remark.} On the other hand, if $g$ is not continuous on $U$ then it is easy to construct examples for which the conclusion of part 2) in the proposition does not hold.  
\begin{proof}[Proof of Proposition \ref{PropositionUpperSemicontinuousExtension}] 

1) Since $g$ is bounded, it is clear that $E(g)$ is also bounded. By definition, it is clear that $E(g)|_U=g$. 

Next we show that $E(g)$ is upper-semicontinuous. If $x\in U$, then there is a small ball $B(x,r)\subset U$, and hence it can be seen that
\begin{eqnarray*}
\limsup _{y\in X\rightarrow x}E(g)(y)=\limsup _{y\in U\rightarrow x}E(g)(y)=\limsup _{y\in U\rightarrow x}g(y)\leq g(x),
\end{eqnarray*}
since $g$ is upper-semicontinuous on $U$. 

It remains to check that if $x\in X\backslash U$, and $x_n\in X\rightarrow x$ then 
\begin{eqnarray*}
\limsup _{n\rightarrow \infty}E(g)(x_n)\leq E(g)(x).
\end{eqnarray*}
To this end, choose $y_n\in U$ so that $d(y_n,x_n)\leq 1/n$ (here $d(.,.)$ is the metric on $X$) and $|g(y_n)-E(g)(x_n)|\leq 1/n$ for all $n$. Then $y_n\rightarrow x$, and hence
\begin{eqnarray*}
\limsup _{n\rightarrow\infty}E(g)(x_n)=\limsup _{n\rightarrow\infty}g(y_n)\leq E(g)(x). 
\end{eqnarray*} 

2) Using the assumption that $g$ is continuous on $U$ and $U_1\subset U$, we first check easily that $E(g_1)|_{U}=g=E(g)|_{U}$, and then the equality $E(g_1)=E(g)$ on the whole of $X$. 

3) Finally, note that if $g_1$ and $g_2$ are two upper-semicontinuous functions then $g_1+g_2$ is also upper-semicontinuous, and the inequality $E(g_1+g_2)\leq E(g_1)+E(g_2)$ follows from properties of limsup. 
\end{proof}
Let $\Gamma _f\subset X\times Y$ be the closure of the graph $\{(x,f(x)):~x\in OpenDom(f)\}$. Then, with the induced topology from $X\times Y$, $\Gamma _f$ is a compact metric space itself. We have two canonical projections $\pi _X,\pi _Y:~X\times Y\rightarrow X,Y$, whose restrictions to $\Gamma _f$ are denoted $\pi _{X,f},\pi _{Y,f}$. If we let $U\subset OpenDom(f)$ be any open dense subset, and by $V=\pi _{X,f}^{-1}(U)$, then $V$ is an open dense subset of $\Gamma _f$ and we see that $\pi _{V,f}:=\pi _{X,f}|_V:V\rightarrow U$ is a homeomorphism. 

\begin{definition} Using Proposition \ref{PropositionUpperSemicontinuousExtension}, we define $(\pi _{X,f})_*(\phi )$, where $\phi \in C^0(\Gamma _f)$, to be the upper-semicontinuous function  $E((\pi _{V,f})_*(\phi ))$ on $X$. We emphasise that it is {\bf globally defined} on the whole of $X$, and is not changed if we replace $V$ (or $U$) by one open dense subset of it. 
\end{definition}

We have finally a canonical definition of pullback of continuous functions by $f:X\dashrightarrow Y$. 

\begin{definition} Let $\varphi \in C^0(Y)$. We denote by $f^*(\varphi )$ the above upper-semicontinuous function $(\pi _{X,f})_*(\pi _{Y,f}^*(\phi ))$. 
\label{DefinitionPullbackContinuousFunctions} \end{definition}

 Note that when $X,Y$ are smooth complex varieties, $f:X\dashrightarrow Y$ a meromorphic map. and $\varphi $ is a continuous quasi-psh function on $Y$ (for example, if $\varphi$ is a $C^2$ function), then our definition of pullback $f^*(\varphi)$ above is the standard one in \cite{meo}. In this case, any continuous function $\varphi$ is a uniform limit of a sequence of $C^2$ functions. This observation and the following proposition, whose simple proof is left out, justify our Definition \ref{DefinitionPullbackContinuousFunctions}. 
 
\begin{proposition} Let $f:X\dashrightarrow Y$ be a continuous open-dense defined map as in the beginning of this section. Assume that ${\varphi _n}$ is a sequence of continuous functions on $Y $ uniformly converging to a continuous function $\varphi$. Then $\{f^*(\varphi _n)\}$ converges uniformly to $f^*(\varphi)$. 
\label{PropositionContinuityPullbackFunctions}\end{proposition}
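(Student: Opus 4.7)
The plan is to establish the stronger quantitative estimate
\[
||f^*(\varphi _n)-f^*(\varphi )||_{L^{\infty}(X)} \leq ||\varphi _n - \varphi ||_{L^{\infty}(Y)},
\]
from which the desired uniform convergence is immediate since the right-hand side tends to $0$ by hypothesis. My approach is to unwind Definition \ref{DefinitionPullbackContinuousFunctions} pointwise and split into two cases depending on whether $x\in OpenDom(f)$ or $x\in I(f)$.

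First, I would record the explicit formula for $f^*(\psi)$ obtained by unwinding the definition. Since $\pi_{V,f}:V\to U$ is a homeomorphism and $\pi_{Y,f}\circ \pi_{V,f}^{-1}$ is precisely $f|_U$, for any $\psi \in C^0(Y)$ one has $(\pi_{V,f})_*(\pi_{Y,f}^*(\psi))(y)=\psi(f(y))$ for $y\in U$. Applying Proposition \ref{PropositionUpperSemicontinuousExtension} then yields
\[
f^*(\psi)(x)=\psi(f(x))\text{ for }x\in U,\qquad f^*(\psi)(x)=\limsup _{y\in U,\, y\to x}\psi(f(y))\text{ for }x\in X\setminus U.
\]

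For $x\in OpenDom(f)$, the desired pointwise bound reduces to the trivial inequality $|\varphi_n(f(x))-\varphi(f(x))|\leq ||\varphi_n-\varphi||_{L^{\infty}(Y)}$. For $x\in I(f)$, set $\epsilon _n := ||\varphi _n - \varphi ||_{L^{\infty}(Y)}$; then on $U$ we have the pointwise sandwich $\varphi(f(y))-\epsilon _n \leq \varphi _n(f(y))\leq \varphi (f(y))+\epsilon _n$. Taking $\limsup$ as $y\to x$ (using that $\limsup$ is monotone in the integrand and commutes with addition of a constant) gives
\[
f^*(\varphi )(x)-\epsilon _n\leq f^*(\varphi _n)(x)\leq f^*(\varphi )(x)+\epsilon _n,
\]
which is exactly the required pointwise bound at $x$. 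Combining the two cases produces the uniform estimate.

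There is essentially no obstacle here: the only point requiring a little care is justifying the explicit limsup formula for $f^*(\psi)$ at indeterminacy points, and this is handled by directly composing the definitions together with Proposition \ref{PropositionUpperSemicontinuousExtension}(1). The argument in fact shows the slightly stronger statement that $\psi \mapsto f^*(\psi)$ is a $1$-Lipschitz map from $(C^0(Y),||\cdot||_{L^{\infty}})$ into the space of bounded upper-semicontinuous functions on $X$ endowed with the sup-norm.
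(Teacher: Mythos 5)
Your proof is correct; the quantitative $1$-Lipschitz estimate $||f^*(\varphi_n)-f^*(\varphi)||_{L^\infty(X)}\leq ||\varphi_n-\varphi||_{L^\infty(Y)}$, obtained by unwinding the definition to $\varphi\circ f$ on $OpenDom(f)$ and its limsup extension at $I(f)$, immediately gives uniform convergence. The paper explicitly leaves this "simple proof" out, and your direct argument is exactly the intended one.
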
 

Using Proposition \ref{PropositionUpperSemicontinuousExtension} and the above upper-semicontinuous pushforward of functions,  we can finally define following Theorem \ref{TheoremSubmeasureUpperSemicontinuous}, the following pullback operator $\pi _{X,f}^*:SM^+(X)\rightarrow SM^+(\Gamma _f)$:
\begin{equation}
\pi _{X,f}^*(\mu )(\varphi ):=\inf _{\psi \in C^0(X,\geq (\pi _{X,f})_*(\varphi ))}\mu (\psi ). 
\label{EquationSubmeasurePullback}\end{equation}
Then, as in the case of continuous maps $g:X\rightarrow Y$ between compact metric spaces, we define $f_*(\mu )$, of a positive strong submeasure $\mu$, by the formula $f_*(\mu )=(\pi _{Y,f})_*\pi _{X,f}^*(\mu )$. 

\begin{definition} 
For convenience, we write here the final formula for pushforwarding a strong submeasure by our maps $f:X\dashrightarrow Y$:
\begin{equation}
f_*(\mu )(\varphi ):=\inf _{\psi \in C^0(X,\geq (\pi _{X,f})_*(\pi _{Y,f}^*(\varphi )))}\mu (\psi ). 
\label{EquationSubmeasurePushforwardMeromorphic}\end{equation}
\end{definition}

Here is the main result of this subsection.  In its proof we use pullback of positive strong submeasures by the projection $\pi $, via pushforward by its inverse $\pi ^{-1}$. For more general about pullback of positive strong submeasures, see the next subsection. 

\begin{theorem} Let $X,Y$ be compact metric spaces, and $f:X\dashrightarrow Y$ a continuous open-dense defined map. 

1) We have $f_*(SM^+(X))\subset SM^+(Y)$. Moreover, if $\mu \in SM^+(X)$, then $f_*(\mu )(\pm 1)=\mu (\pm 1)$. In particular, $||f_*(\mu )||=||\mu ||$. 

2) Assume that $g:Y\dashrightarrow Z$ is another continuous open-dense defined, and there is an open dense set $U\subset OpenDom(f)$ such that $f(U)\subset OpenDom(g)$.  For all $\mu \in SM^+(X)$ we have $g_*f_*(\mu )\geq (g\circ f)_*(\mu )$. If $f$ and $g$ are {\bf continuous} on the whole of $X$ and $Y$, then equality happens. 
 

3) If $\mu _n\in SM^+(X)$ weakly converges to $\mu$,  and $\nu$ is a cluster point of $f_*(\mu _n)$, then $\nu \leq f_*(\mu )$. If $f$ is continuous on the whole of $X$, then $\lim _{n\rightarrow\infty}f_*(\mu _n)=f_*(\mu )$.  

4) If $\mu$ is a positive measure without mass on $I(f)$, then $f_*(\mu )$ is the same as the usual definition. 

5) For any positive strong submeasure $\mu$, we have $f_*(\mu )=\sup _{\chi \in \mathcal{G}(\mu)}f_*(\chi )$, where $\mathcal{G}(\mu )=\{\chi :$ $\chi $ is a measure and $\chi \leq \mu \}$. 

6) For every positive strong submeasures $\mu _1,\mu _2$, we have $f_*(\mu _1+\mu _2)\geq f_*(\mu _1)+f_*(\mu _2)$. 
\label{TheoremSubmeasurePushforwardMeromorphic}\end{theorem}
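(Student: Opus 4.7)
The plan is to reduce every claim to the operator $E(\mu):BUS(X)\to\mathbb{R}$ from Theorem \ref{TheoremSubmeasureUpperSemicontinuous}. Unpacking Definition \ref{DefinitionPullbackContinuousFunctions} and formula (\ref{EquationSubmeasurePushforwardMeromorphic}), for $\varphi\in C^0(Y)$ the pullback $f^*\varphi$ is the canonical bounded upper-semicontinuous extension of $\varphi\circ f$ from $\mathrm{OpenDom}(f)$ to $X$ (Proposition \ref{PropositionUpperSemicontinuousExtension}), so that
\begin{eqnarray*}
f_*(\mu)(\varphi)=\inf_{\psi\in C^0(X,\geq f^*\varphi)}\mu(\psi)=E(\mu)(f^*\varphi).
\end{eqnarray*}
All six parts will then be obtained by chaining three monotone sublinear operations: continuous pullback on $\mathrm{OpenDom}(f)$, the usc-extension $E(\cdot)$ of Proposition \ref{PropositionUpperSemicontinuousExtension} (sublinear and monotone), and $E(\mu)$ (sublinear, monotone, bounded by $\|\mu\|$, and equal to $\mu$ on $C^0(X)$ for positive $\mu$, by Theorem \ref{TheoremSubmeasureUpperSemicontinuous}).

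For part 1 I would transport sublinearity, monotonicity and the $\|\mu\|$-bound through this three-step composition; the identities $f_*(\mu)(\pm 1)=\mu(\pm 1)$ are immediate because $f^*(\pm 1)=\pm 1\in C^0(X)$ and $E(\mu)=\mu$ there, after which Theorem \ref{TheoremSubmeasureBasicProperty} gives the norm equality. For part 2, on the open dense set $U$ where $g\circ f$ is defined and continuous, any $\psi'\geq g^*\varphi$ satisfies $\psi'\circ f\geq\varphi\circ g\circ f$; taking usc extensions on both sides and using Proposition \ref{PropositionUpperSemicontinuousExtension}.2 to identify the extension of a function with that of its restriction to a smaller open dense set yields $f^*\psi'\geq(g\circ f)^*\varphi$. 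Hence $\{\psi\in C^0(X,\geq f^*\psi')\}\subseteq\{\psi\in C^0(X,\geq(g\circ f)^*\varphi)\}$, the inner infimum is over a smaller set, and taking a further infimum over $\psi'$ produces $g_*f_*(\mu)\geq(g\circ f)_*(\mu)$. When $f$ and $g$ are globally continuous every $E$ collapses and the chain becomes an equality. Part 3 is similar: for any continuous $\psi\geq f^*\varphi$ one has $f_*(\mu_n)(\varphi)\leq\mu_n(\psi)\to\mu(\psi)$, so the cluster point satisfies $\nu(\varphi)\leq\mu(\psi)$, and infimising over $\psi$ gives $\nu\leq f_*(\mu)$; if $f$ is continuous on $X$ then $f^*\varphi\in C^0(X)$ and weak convergence makes this an equality. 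Part 4 follows by applying Lebesgue's monotone convergence to continuous $\psi_n\searrow f^*\varphi$ (provided by Baire): since $\mu$ is a measure with no mass on $I(f)$, $\inf_n\mu(\psi_n)=\mu(f^*\varphi)=\int_{\mathrm{OpenDom}(f)}\varphi\circ f\,d\mu$, the usual pushforward.

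The main technical step is part 5, from which part 6 follows in one line. The direction $f_*(\mu)\geq\sup_{\chi\in\mathcal{G}(\mu)}f_*(\chi)$ is just monotonicity in $\mu$. For the reverse, I fix $\varphi\in C^0(Y)$, set $h=f^*\varphi\in BUS(X)$, and choose continuous $\psi_n\searrow h$ by Baire. Applying Hahn-Banach as in the proof of Theorem \ref{TheoremHahnBanach}, for each $n$ I pick a positive measure $\chi_n\leq\mu$ with $\chi_n(\psi_n)=\mu(\psi_n)$. Since $\chi_n(1)\leq\mu(1)$, weak compactness extracts a subsequence $\chi_{n_k}\rightharpoonup\chi_\infty\in\mathcal{G}(\mu)$. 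For fixed $m$ and all $n_k\geq m$, monotonicity of $\chi_{n_k}$ gives $\chi_{n_k}(\psi_m)\geq\chi_{n_k}(\psi_{n_k})=\mu(\psi_{n_k})$; sending $k\to\infty$ (weak convergence on the continuous $\psi_m$) and then $m\to\infty$ (Lebesgue monotone convergence for the measure $\chi_\infty$, while $\mu(\psi_{n_k})\to E(\mu)(h)=f_*(\mu)(\varphi)$) yields $\chi_\infty(h)\geq f_*(\mu)(\varphi)$. The reverse inequality $\chi_\infty(h)\leq f_*(\mu)(\varphi)$ follows from $\chi_\infty\leq\mu$ and monotonicity of $E$, so equality holds and $f_*(\chi_\infty)(\varphi)=f_*(\mu)(\varphi)$, proving part 5. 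Part 6 is then immediate: for $\chi_i\in\mathcal{G}(\mu_i)$, linearity of the measure $\chi_1+\chi_2$ on the usc function $h$ gives $f_*(\chi_1+\chi_2)=f_*(\chi_1)+f_*(\chi_2)$ and $\chi_1+\chi_2\in\mathcal{G}(\mu_1+\mu_2)$, so taking suprema yields the desired inequality. The main obstacle throughout is the inf-sup exchange hidden in part 5; it is resolved by the diagonal Hahn-Banach selection of $\chi_n$ together with weak compactness of measures dominated by $\mu$.
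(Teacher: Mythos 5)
Your proposal is correct, and parts 1)--4) and 6) follow essentially the same route as the paper: everything is funnelled through the monotone sublinear extension operator $E(\mu)$ applied to the upper-semicontinuous pullback $f^*\varphi$, the composition inequality in 2) comes from the inclusion of the sets of continuous majorants, 3) is the same one-line cluster-point argument, and 6) is deduced from 5) via $\mathcal{G}(\mu_1)+\mathcal{G}(\mu_2)\subset\mathcal{G}(\mu_1+\mu_2)$. Where you genuinely diverge is part 5). The paper factors $f_*=(\pi_{Y,f})_*\circ\pi_{X,f}^*$ through the graph closure $\Gamma_f$, disposes of the continuous projection $\pi_{Y,f}$ immediately, and for $\pi_{X,f}^*$ uses the identity $(\pi_{X,f})_*\pi_{X,f}^*(\mu)=\mu$ to push any measure $\chi'\leq\pi_{X,f}^*(\mu)$ on the graph down to $\chi=(\pi_{X,f})_*(\chi')\leq\mu$ with $\pi_{X,f}^*(\chi)\geq\chi'$; this is structural and recycles the pullback formalism the paper needs anyway. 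You instead stay on $X$ and, for each fixed test function $\varphi$, manufacture a single $\chi_\infty\in\mathcal{G}(\mu)$ attaining $f_*(\mu)(\varphi)$ by a Hahn--Banach selection $\chi_n\leq\mu$ with $\chi_n(\psi_n)=\mu(\psi_n)$ along a decreasing sequence $\psi_n\searrow f^*\varphi$, followed by weak compactness. This is more self-contained (no graph machinery) and yields the extra information that the supremum in 5) is attained pointwise in $\varphi$. The one step you should justify explicitly is the parenthetical claim $\lim_k\mu(\psi_{n_k})=E(\mu)(f^*\varphi)$: since $E(\mu)$ is an infimum over \emph{all} continuous majorants and $\mu$ is only sublinear, not a measure, you cannot invoke monotone convergence; instead, given any $\psi\in C^0(X,\geq f^*\varphi)$ and $\epsilon>0$, Dini's theorem applied to $\max\{\psi_n-\psi-\epsilon,0\}\searrow 0$ gives $\psi_n\leq\psi+2\epsilon$ for large $n$, whence $\mu(\psi_n)\leq\mu(\psi)+2\epsilon\,\mu(1)$ by sublinearity, which closes the gap. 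With that line added, your argument is complete.
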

\begin{proof}[Proof of Theorem \ref{TheoremSubmeasurePushforwardMeromorphic}]

1) Let $\mu \in SM^+(X)$, we will show that $f_*(\mu )\in SM^+(Y)$. Let $\varphi ,\varphi _1,\varphi _2\in C^0(Y)$ and $0\leq \lambda \in \mathbb{R}$. 

First, we show that $f_*(\mu )(\pm 1)=\mu (\pm 1)$. In fact, it follows from the definition that $(\pi _{X,f})_*\pi _{Y,f}^*(\pm 1)=\pm 1$, and hence \begin{eqnarray*}
f_*(\mu )(\pm 1)=\inf _{\psi \in C^0(X,\geq \pm 1)}\mu (\psi )=\mu (\pm 1). 
\end{eqnarray*}

Second, we show the positivity of $f_*(\mu )$. If $\varphi _1\geq \varphi _2$, then it can see from the definition that $(\pi _{X,f})_*\pi _{Y,f}^*( \varphi _1)\geq (\pi _{X,f})_*\pi _{Y,f}^*( \varphi _2)$. Therefore $C^0(X,\geq (\pi _{X,f})_*\pi _{Y,f}^*( \varphi _1))\subset C^0(X,\geq (\pi _{X,f})_*\pi _{Y,f}^*( \varphi _2))$, and hence it follows from definition that $f^*(\mu )(\varphi _1)\geq f^*(\mu )(\varphi _2)$. 

Next, we show that $f_*(\mu )$ is bounded and moreover $||f_*(\nu )||=\deg (f)||\mu ||$. By positivity of $f_*(\mu )$, we have $f_*(\mu )(-||\varphi ||_{L^{\infty}})\leq f_*(\mu )\leq f_*(\mu )(||\varphi ||_{L^{\infty}})$. Hence $f_*(\nu )$ is bounded, and we conclude by Theorem \ref{TheoremSubmeasureBasicProperty}. 

Finally, we show the sub-linearity. The equality $f_*(\mu )(\lambda \varphi )=\lambda f_*(\mu )(\varphi )$, for $\lambda \geq 0$, follows from the fact that $(\pi _{X,f})_*\pi _{Y,f}^*(\lambda \varphi )=\lambda (\pi _{X,f})_*\pi _{Y,f}^*(\varphi )$ and properties of infimum. We now prove that $f_*(\mu )(\varphi _1+\varphi _2)\leq f^*(\mu )(\varphi _1)+f^*(\mu )(\varphi _2)$. In fact, from Proposition \ref{PropositionUpperSemicontinuousExtension} we have $$(\pi _{X,f})_*\pi _{Y,f}^*(\varphi _1+\varphi _2)\leq (\pi _{X,f})_*\pi _{Y,f}^*( \varphi _1)+(\pi _{X,f})_*\pi _{Y,f}^*( \varphi _2),$$
and hence if $\psi _1\in C^0(X,\geq (\pi _{X,f})_*\pi _{Y,f}^*( \varphi _1))$ and $\psi _2\in C^0(X, \geq (\pi _{X,f})_*\pi _{Y,f}^*( \varphi _2))$ then $\psi _1+\psi _2\in C^0(X,\geq (\pi _{X,f})_*\pi _{Y,f}^*(\varphi _1+\varphi _2))$. Hence, by definition 
\begin{eqnarray*}
f_*(\mu )(\varphi _1+\varphi _2)\leq \mu (\psi _1+\psi _2)\leq \mu (\psi _1) + \mu (\psi _2). 
\end{eqnarray*}
In the second inequality we used the sub-linearity of $\mu$. If we choose $\psi _1$ and $\psi _2$ so that $\mu (\psi _1)$ is close to $f_*(\mu )(\varphi _1)$ and $\mu (\psi _2)$ is close to $f_*(\mu )(\varphi _2)$, then we see that $f_*(\mu )(\varphi _1+\varphi _2)\leq f_*(\mu )(\varphi _1)+f_*(\mu )(\varphi _2)$ as wanted. 

 2) By definition, we have
 \begin{eqnarray*}
 (g\circ f)_*(\mu )(\varphi )=\inf _{\psi \in C^0(X,\geq (g\circ f)^*(\varphi ))}\mu (\psi ). 
 \end{eqnarray*}
 Here we recall that $(g\circ f)^*(\varphi )$ is the upper-semicontinuos pullback of $\varphi $ by $g\circ f$. 
 
 On the other hand,
 \begin{eqnarray*}
g_*f_*(\mu )(\varphi )=\inf _{\psi _1\in C^0(Y,\geq g^*(\varphi ))}f_*(\mu )(\psi _1)=\inf _{\psi _1\in C^0(Y,\geq g^*(\varphi ))}\inf _{\psi _2\in C^0(X,\geq f^*(\psi _1))} \mu (\psi _2). 
 \end{eqnarray*}

 Then,it follows by the proof of part 2) of Proposition \ref{PropositionUpperSemicontinuousExtension} that whenever $\psi _1\in C^0(Y,\geq g^*(\varphi ))$  and $\psi _2\in C^0(Y,\geq f^*(\psi _1 ))$, then $\psi _2\in C^0(X, \geq (g\circ f)^*(\varphi ))$. From this, we get $g_*f_*(\varphi )(\nu )\geq (g\circ f)_*(\mu )(\varphi )$. 
 
When $f$ and $g$ are continuous on the whole $X$ and $Y$ and $\varphi $ is continuous, then $g^*(\varphi )$, $f^*(g^*(\varphi ))$ and $(g\circ f)^*(\varphi )$ are all continuous functions. Then using the positivity of $\mu$, we can easily see that $$(g\circ f)_*(\mu )(\varphi )=\mu (f^*g^*(\varphi ))=f_*(\mu )(g^*(\varphi ))=g_*f_*(\mu )(\varphi ).$$  



3)  It is enough to show the following: for all $\varphi \in C^0(Y)$ then 
\begin{eqnarray*}
\limsup _{n\rightarrow \infty}\inf _{\psi \in C^0(X,\geq f^*(\varphi ))}\mu _n(\psi )\leq \inf _{\psi \in C^0(X,\geq f^*(\varphi ))}\mu (\psi ). 
\end{eqnarray*}
If we choose $\psi _0\in  C^0(X,\geq f^*(\varphi ))$ so that $\mu (\psi _0)$ is close to $f_*(\mu )(\varphi )$, then from $\mu _n(\psi _0)\rightarrow \mu (\psi _0)$ we obtain the conclusion. 

If $f$ is continuous on the whole of $X$, then $f^*(\varphi )$ is itself a continuous function. Then it is easy to see  that $\lim _{n\rightarrow\infty}f_*(\mu _n)(\varphi )=f_*(\mu )(\varphi )$. 

4)  The upper-semicontinuous pullback $f^*(\varphi )$ of a function $\varphi \in C^0(Y)$ is continuous on the open set $U=X\backslash I(f)$. Therefore, by choosing a small open neighborhood $U_1$ of $I(f)$ and a partition of unity subordinate to $U$ and $U_1$, it is easy to find for any $U_2\subset\subset U$ a $\psi \in C^0(X,\geq f^*(\varphi ))$ so that $\psi |_{U_2}=f^*(\varphi )|_{U_2}$.  From this and the assumption that $\mu$ has no mass on $I(f)$,  the conclusion follows. 

5) As mentioned before the statement of the theorem, since $\pi _{X,f}:\Gamma _f\backslash \pi _{X,f}^{-1}(I(f))\rightarrow X\backslash I(f)$ is a homeomorphism, we can define the pullback of a positive strong submeasure on $X$ - as a positive strong submeasure on $\Gamma _f$ - as the pushforward of the continuous open-dense defined map $\pi _{X,f}^{-1}:X\dashrightarrow \Gamma _f$. We have for any positive strong submeasure $\mu$ that $f_*(\mu )=(\pi _{Y,f})_*\pi _{X,f} ^*(\mu )$. It is easy to check that the conclusion holds for $\pi _{Y,f}$, and hence to prove the result it suffices to prove that $\pi _{X,f}^*(\mu )=\sup _{\chi \in \mathcal{G}(\mu )}\pi ^*(\chi )$.  

Since $\pi _{X,f}^*(\mu )\geq \pi _{X,f}^*(\chi )$ for all $\chi \in \mathcal{G}(\mu )$, it follows that $\pi _{X,f}^*(\mu )\geq \sup _{\chi \in \mathcal{G}(\mu )}\pi _{X,f} ^*(\chi )$. Now we will prove the reverse inequality. To this end, it suffices to show that for any measure $\chi '\leq \pi ^*(\mu )$, there is a measure $\chi \leq \mu$ so that $\chi '\leq \pi ^*(\chi )$. 

We first show that $(\pi _{X,f})_*\pi _{X,f} ^*(\mu )=\mu $. In fact, if $\varphi \in C^0(X)$ then $\pi _{X,f}^*(\varphi )\in C^0(Z)$ and $\varphi =(\pi _{X,f})_*\pi _{X,f}^*(\varphi )$.  Hence, by definition 
\begin{eqnarray*}
(\pi _{X,f})_*\pi _{X,f}^*(\mu )(\varphi )=\pi _{X,f}^*(\mu )(\pi _{X,f}^*(\varphi ))=\mu ((\pi _{X,f})*\pi _{X,f}^*(\varphi ))=\mu (\varphi ). 
\end{eqnarray*} 
Hence $(\pi _{X,f})_*\pi _{X,f}^*(\mu )=\mu $ as wanted. 

Now if $\chi '$ is any measure on $Z$, then $\chi =(\pi _{X,f})_*(\chi ')$ is a measure on $X$. If moreover, $\chi '\leq \pi _{X,f}^*(\mu )$, then
\begin{eqnarray*}
\chi =(\pi _{X,f})_*(\chi ')\leq (\pi _{X,f})_*\pi _{X,f}^*(\mu )=\mu . 
\end{eqnarray*}
To conclude the proof, we will show that $\pi _{X,f}^*(\chi )\geq \chi '$. To this end, let $\varphi \in C^0(Z)$, we will show that $\pi _{X,f}^*(\chi )(\varphi )\geq \chi '(\varphi )$. By definition, the value of the positive strong submeasure $\pi _{X,f}^*(\chi )$ at $\varphi$ is defined as: $\pi _{X,f}^*(\chi )(\varphi )=\inf _{\psi \in C^0(X,\geq (\pi _{X,f})_*(\varphi ))}\chi (\psi )$, and since $\chi =(\pi _{X,f})_*(\chi ')$ the RHS is equal to $\inf _{\psi \in C^0(X,\geq (\pi _{X,f})_*(\varphi ))}\chi '(\pi _{X,f}^*(\psi ))\geq \chi '(\varphi )$. The latter follows from the fact that $\chi '$ is a positive strong submeasure and that for all $\psi \in C^0(X,\geq (\pi _{X,f})_*(\varphi ))$ we have $\pi _{X,f}^*(\psi )\geq \varphi $.   

6) Let $\varphi$ be a continuous function on $X$. We then have by part 5, using $\mathcal{G}(\mu _1)+\mathcal{G}(\mu _2)\subset \mathcal{G}(\mu _1+\mu _2)$, that
\begin{eqnarray*}
f_*(\mu _1+\mu _2)(\varphi )&=&\sup _{\nu \in \mathcal{G}(\mu _1+\mu _2)}\inf _{\psi \in C^0(\geq f^*(\varphi ))}\nu (\psi )\\
&\geq&\sup _{\nu _1\in \mathcal{G}(\mu _1),\nu _2\in \mathcal{G}(\mu _2)}\inf _{\psi \in C^0(\geq f^*(\varphi ))}(\nu _1+\nu _2) (\psi )\\
&\geq&\sup _{\nu _1\in \mathcal{G}(\mu _1),\nu _2\in \mathcal{G}(\mu _2)}[\inf _{\psi \in C^0(\geq f^*(\varphi ))}\nu _1 (\psi )+\inf _{\psi \in C^0(\geq f^*(\varphi ))}\nu _2 (\psi ) ]\\
&=&f_*(\mu _1)(\varphi )+f_*(\mu _2)(\varphi ).
\end{eqnarray*}
\end{proof}

{\bf Example 2.} Let $J:\mathbb{P}^2\dashrightarrow \mathbb{P}^2$ be the standard Cremona map given by $J[x_0:x_1:x_2]=[1/x_0:1/x_1:1/x_2]$. It is a birational map and is an involution: $J^2=$ the identity map. Let $e_0=[1:0:0]$, $e_1=[0:1:0]$ and $e_2=[0:0:1]$, and $\Sigma _i=\{x_i=0\}$ ($i=0,1,2$). Let $\pi :X\rightarrow \mathbb{P}^2$ be the blowup of $\mathbb{P}^2$ at $e_0,e_1$ and $e_2$, and let $E_0, E_1$ and $E_2$ be the corresponding exceptional divisors. Let $h=f\circ \pi :X\rightarrow \mathbb{P}^2$, then $h$ is a holomorphic map. Moreover, $\pi ^{-1}(e_0)=E_0$ and $h(E_0)=\Sigma _0$. More precisely, we have $h^{-1}(\Sigma _0\backslash \{e_1,e_2\})\subset E_0$. From this, we can compute, as in Example 1 in the introduction and proof of part 2) of Proposition \ref{PropositionUpperSemicontinuousExtension}, that for all $\varphi \in C^0(X)$ and for $\delta _{e_0}$ the Dirac measure at $e_0$
\begin{eqnarray*}
J_*(\delta _{e_0})(\varphi )=\sup _{\Sigma _0}\varphi . 
\end{eqnarray*}
It follows that $J_*(\delta _{e_0})\geq \max \{\delta _{e_1}, \delta _{e_2}\}$, where $\delta _{e_1}$ is the Dirac measure at $e_1$ and $\delta _{e_2}$ is the Dirac measure at $e_2$. Therefore, by the positivity of $J_*$ we obtain: 
\begin{eqnarray*}
J_*J_*(\delta _0 )(\varphi )\geq J_*(\max \{\delta _{e_1}, \delta _{e_2}\})(\varphi )\geq \max \{J_*(\delta _{e_1}(\varphi )), J_*(\delta _{e_2}(\varphi ))\}=\max \{\sup _{\Sigma _1}\varphi , \sup _{\Sigma _2}\varphi \}. 
\end{eqnarray*}
On the other hand, $J\circ J=$ the identity map, and hence $(J\circ J)_*(\delta _{e_0})=\delta _{e_0}$. Hence the inequality in part 2) of Theorem \ref{TheoremSubmeasurePushforwardMeromorphic} is strict in this case. 

This example also shows that the inequality in part 3) of Theorem \ref{TheoremSubmeasurePushforwardMeromorphic} is strict in general. In fact, let $\{p_n\}\subset X\backslash I(f)$ be a sequence converging to a point $p=e_0$ and $\{J(p_n)\}$ converges to a point $q\in \Sigma _0$. Let $\mu _n=\max \{\delta _{p_1},\ldots ,\delta _{p_n}\}$. It can be checked easily that $\mu _n$ is an increasing sequence of positive strong submeasures, with $J_*(\mu _n)=\max \{\delta _{J(p_1)},\ldots ,\delta _{J(p_n)}\}$ for all $n$. Then the weak convergence limit $\mu =\lim _{n\rightarrow\infty}\mu _n=\sup _n\delta _{p_n}$ exists. In particular, $\mu \geq \delta _{e_0}$, and hence from the above calculation we find $J_*(\mu )\geq \sup _{x\in \Sigma _0}\delta _x$. On the other hand, $\nu =\lim _{n\rightarrow \infty}J_*(\mu _n)=\sup _{n}\delta _{J(p_n)}$. It is clear that if $x\in \Sigma _0\backslash \{q\}$, then $\nu$ cannot be compared with $\delta _x$. Therefore, we have the strict inequality $J_*(\mu )>\nu$ in this case.  

If we choose a sequence of points $\{p_n\}_{n=1,2,\ldots }\subset X\backslash \{e_0,e_1,e_2\}$ converging to $e_0$ and such that $q_n=J(p_n)$ converges to a point $q_0\in \Sigma _0$, then it can be seen that for $\mu =\sup _n\delta _{p_n}$ we have $J_*(\mu )>\sup _nJ_*(\delta _{p_n})$. Hence part 5) of Theorem \ref{TheoremSubmeasurePushforwardMeromorphic} does not hold in general if we replace $\mathcal{G}(\mu )$ by the smaller set $\mathcal{G}=\{\delta _{p_n}\}_n$.

\subsection{Pullback of positive strong submeasures}

Let $f:X\dashrightarrow Y$ be a continuous open-dense defined map, so that there is an open dense subset $U\subset X$ such that $f(U)$ is open dense in $Y$ and  $f_U=f|_U:U\rightarrow f(U)$ is a proper covering map of finite degree. In this subsection we will define the pullback of positive strong submeasures on $Y$ for such maps. 

To this end, we note that for each  $\varphi \in C^0(X)$, the function 
\begin{eqnarray*}
(f|_U)_*(\varphi )(y)=\sum _{x\in (f|_U)^{-1}(y)}\varphi (x)
\end{eqnarray*}
is a continuous function on $f(U)$. Therefore, using Proposition \ref{PropositionUpperSemicontinuousExtension} we can define the following upper-semicontinuous function on $Y$:
\begin{eqnarray*}
f_*(\varphi ):=E((f|_U)_*(\varphi )).  
\end{eqnarray*}
Then, similarly to the previous subsection, we can define for $\nu \in SM^+(Y)$ and $\varphi \in C^0(X)$: 
\begin{eqnarray*}
f^*(\nu )(\varphi ):= \inf _{\psi \in C^0(Y,\geq f_*(\varphi ))}\nu (\psi ). 
\end{eqnarray*}

We have the following relation between pullback and pushforward, whose simple proof is not included. 
\begin{theorem}
Let $f:X\dashrightarrow Y$ be a continuous open-dense defined map. Assume that there is an open dense set $U\subset OpenDom(f)$ so that $f:U\rightarrow f(U)$ is a homeomorphism. Let $f^{-1}:Y\dashrightarrow X$ be the inverse open-dense defined map of $f$. Then for all  $\nu \in SM^+(Y)$ we have $f^*(\nu )=(f^{-1})_*(\nu )$.  
\label{TheoremPushforwardPullback}\end{theorem}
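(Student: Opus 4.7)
The plan is to unfold both sides of the claimed equality and reduce it to the pointwise equality of two upper-semicontinuous functions on $Y$. By the definition of the pullback in this subsection,
\begin{eqnarray*}
f^*(\nu)(\varphi) = \inf _{\psi \in C^0(Y, \geq f_*(\varphi))}\nu(\psi),
\end{eqnarray*}
where $f_*(\varphi) = E((f|_U)_*(\varphi))$ is an upper-semicontinuous function on $Y$. On the other hand, formula (\ref{EquationSubmeasurePushforwardMeromorphic}) applied to the continuous open-dense defined map $f^{-1}:Y\dashrightarrow X$ gives
\begin{eqnarray*}
(f^{-1})_*(\nu)(\varphi) = \inf _{\psi \in C^0(Y,\geq (f^{-1})^*(\varphi))}\nu(\psi),
\end{eqnarray*}
with $(f^{-1})^*(\varphi)$ the upper-semicontinuous pullback of Definition \ref{DefinitionPullbackContinuousFunctions}. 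Thus the theorem will follow once I show $f_*(\varphi) = (f^{-1})^*(\varphi)$ as functions on $Y$, for every $\varphi\in C^0(X)$.

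The key observation is that the homeomorphism hypothesis makes $f|_U$ a covering of degree one, so that $(f|_U)_*(\varphi)(y) = \varphi(f^{-1}(y))$ for all $y \in f(U)$. For the other side, note that on $f(U)\subset OpenDom(f^{-1})$ the graph $\Gamma_{f^{-1}}$ is the set $\{(y,f^{-1}(y)):~y\in f(U)\}$, and $\pi_{Y,f^{-1}}$ restricts to a homeomorphism from this subset onto $f(U)$. Chasing through Definition \ref{DefinitionPullbackContinuousFunctions}, one reads off that $(f^{-1})^*(\varphi)(y) = \varphi(f^{-1}(y))$ for $y\in f(U)$ as well. Therefore the two functions $f_*(\varphi)$ and $(f^{-1})^*(\varphi)$ on $Y$ are both obtained as the canonical upper-semicontinuous extension, via the operator $E$, of the common continuous function $\varphi\circ f^{-1}$ on the open dense set $f(U)$.

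By part 2) of Proposition \ref{PropositionUpperSemicontinuousExtension}, such an extension depends only on the continuous function being extended and not on the open dense set on which it is originally given, so $f_*(\varphi) = (f^{-1})^*(\varphi)$ on all of $Y$. The two infima in the formulas above are then taken over identical sets, yielding $f^*(\nu)(\varphi) = (f^{-1})_*(\nu)(\varphi)$ for every $\varphi\in C^0(X)$, as desired. I expect no essential obstacle here: the entire argument is a careful unfolding of the definitions, and the only step requiring even mild care is verifying that the graph construction underlying the upper-semicontinuous pullback of Definition \ref{DefinitionPullbackContinuousFunctions} really does reduce to $\varphi\circ f^{-1}$ on $f(U)$, which is immediate since $\pi_{Y,f^{-1}}$ is one-to-one above $f(U)$.
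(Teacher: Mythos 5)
Your argument is correct: both $f^*(\nu)(\varphi)$ and $(f^{-1})_*(\nu)(\varphi)$ are infima of $\nu(\psi)$ over $\psi\in C^0(Y,\geq g)$, where in each case $g$ is the canonical upper-semicontinuous extension $E$ of $\varphi\circ f^{-1}$ from an open dense subset of $Y$, and part 2) of Proposition \ref{PropositionUpperSemicontinuousExtension} makes these extensions coincide. The paper omits this proof as ``simple,'' and your unfolding of the definitions is exactly the intended argument.
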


The proof of the following result is similar to that of Theorem \ref{TheoremSubmeasurePushforwardMeromorphic}.

\begin{theorem} Let $X,Y$ be compact metric spaces, and $f:X\dashrightarrow Y$ a continuous open-dense defined map so that there is  an open dense subset $U\subset X$ such that $f(U)$ is open dense in $Y$ and  $f_U=f|_U:U\rightarrow f(U)$ is a proper covering map of finite degree. Let $g: Y\dashrightarrow Z$ be another continuous open-dense defined map so that there is  an open dense subset $V\subset Y$ such that $g(V)$ is open dense in $Y$ and  $g_V=g|_V:V\rightarrow g(U)$ is a proper covering map of finite degree.. 

1) We have $f^*(SM^+(Y))\subset SM^+(X)$. Moreover, if $\nu \in SM^+(Y)$, then $f^*(\nu )(\pm 1)=\deg (f) \nu (\pm 1)$. In particular, $||f^*(\nu )||=\deg (f) ||\nu ||$. 

2) For all $\nu \in SM^+(Y)$ we have $f^*g^*(\mu )\geq (g\circ f)^*(\nu )$. If $f$ and $g$ are {\bf continuous} on the whole of $X$ and $Y$, and $g$ is homeomorphic on an open dense subset $V$ of $Y$, then equality happens. 
 
\label{TheoremSubmeasurePullbackMeromorphic}\end{theorem}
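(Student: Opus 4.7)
The plan is to mirror the proof of Theorem \ref{TheoremSubmeasurePushforwardMeromorphic}, replacing the upper-semicontinuous pullback of functions $f^*(\varphi)$ with the upper-semicontinuous fibrewise-sum pushforward $f_*(\varphi):=E((f|_U)_*(\varphi))$, where $(f|_U)_*(\varphi)(y)=\sum_{x\in(f|_U)^{-1}(y)}\varphi(x)$. The two parts decouple, and both rely on the monotonicity and sub-additivity of the extension operator $E$ from Proposition \ref{PropositionUpperSemicontinuousExtension} together with the elementary linearity and positivity of the fibrewise sum.

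For part 1), the key observation is that $(f|_U)_*(\pm 1)\equiv\pm\deg(f)$ as a constant function on $f(U)$, hence by parts 1)--2) of Proposition \ref{PropositionUpperSemicontinuousExtension} its $E$-extension is $\pm\deg(f)$ on all of $Y$. Because $\nu$ is a positive strong submeasure, $\nu(\psi)\geq\nu(\pm\deg(f))=\pm\deg(f)\nu(\pm 1)$ for any $\psi\in C^0(Y,\geq\pm\deg(f))$, with the constant function itself realising the infimum; this gives $f^*(\nu)(\pm 1)=\deg(f)\nu(\pm 1)$. Positivity $f^*(\nu)(\varphi_1)\geq f^*(\nu)(\varphi_2)$ for $\varphi_1\geq\varphi_2$ is immediate from monotonicity of $E$ and the resulting containment $C^0(X,\geq f_*(\varphi_1))\subset C^0(X,\geq f_*(\varphi_2))$. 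Positive homogeneity is inherited verbatim from that of $(f|_U)_*$ and $E$, and sub-additivity follows from Proposition \ref{PropositionUpperSemicontinuousExtension}(3), namely $f_*(\varphi_1+\varphi_2)\leq f_*(\varphi_1)+f_*(\varphi_2)$, via the same near-minimiser argument used in Theorem \ref{TheoremSubmeasurePushforwardMeromorphic}(1). Boundedness and $||f^*(\nu)||=\deg(f)||\nu||$ then follow from Theorem \ref{TheoremSubmeasureBasicProperty}(2) applied to the positive strong submeasure $f^*(\nu)$.

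For part 2), unwinding the definitions gives
\begin{eqnarray*}
f^*g^*(\nu)(\varphi)=\inf_{\psi_1\in C^0(Y,\geq f_*(\varphi))}\,\inf_{\psi_2\in C^0(Z,\geq g_*(\psi_1))}\nu(\psi_2),
\end{eqnarray*}
to be compared with the single-infimum expression for $(g\circ f)^*(\nu)(\varphi)$. It therefore suffices to show $\psi_2\geq(g\circ f)_*(\varphi)$ whenever $\psi_1\geq f_*(\varphi)$ and $\psi_2\geq g_*(\psi_1)$. On a common open dense subset $W\subset X$ on which $f$, $g\circ f$, and $g|_{f(W)}$ are simultaneously honest covering maps onto their images, the partition of the fibre of $g\circ f$ into fibres of $g$ and $f$ gives the pointwise identity $(g\circ f)_*(\varphi)=g_*(f_*(\varphi))$; monotonicity of the fibrewise sum and of $E$ then yields $\psi_2\geq g_*(\psi_1)\geq g_*(f_*(\varphi))=(g\circ f)_*(\varphi)$ on a dense open subset of $Z$, and Proposition \ref{PropositionUpperSemicontinuousExtension}(2) together with the preservation of $\geq$ by $\limsup$ propagates the inequality to all of $Z$. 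The equality case, when $f,g$ are continuous on the whole of $X$, $Y$ and $g$ is a homeomorphism on a dense open $V\subset Y$, is handled exactly as in the corresponding part of Theorem \ref{TheoremSubmeasurePushforwardMeromorphic}(2): all relevant functions are then continuous on matching dense opens, the fibrewise-sum identity holds globally, and a near-minimiser for $(g\circ f)^*(\nu)(\varphi)$ can be rewritten as a double-near-minimiser for $f^*g^*(\nu)(\varphi)$. The only real obstacle is the bookkeeping needed to produce a single open dense subset supporting all three covering-map structures simultaneously; once that is arranged, the remainder is entirely formal.
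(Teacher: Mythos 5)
Your proposal is correct and is essentially the argument the paper intends: the paper omits the proof of Theorem \ref{TheoremSubmeasurePullbackMeromorphic}, stating only that it is similar to that of Theorem \ref{TheoremSubmeasurePushforwardMeromorphic}, and your proof is exactly that adaptation, with the upper-semicontinuous fibrewise-sum pushforward $f_*(\varphi)=E((f|_U)_*(\varphi))$ playing the role of $f^*(\varphi)$ and the constant $(f|_U)_*(\pm 1)=\pm\deg(f)$ accounting for the factor $\deg(f)$ in part 1). The remaining bookkeeping you flag (a single dense open set on which $f$, $g$ and $g\circ f$ are simultaneously finite coverings, obtained from $(f|_U)^{-1}(f(U)\cap V)$) is routine and is likewise left implicit in the paper.
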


\section{Invariant positive strong submeasures}

In the previous section we defined for each continuous open-dense defined $f:X\dashrightarrow Y$ an operator $f_*:SM^+(X)\rightarrow SM^+(Y)$. Here we apply this to the case where $X=Y$ to produce invariant positive strong submeasures by combining Cesaro's averages with a min-max principle, as stated in Theorem  \ref{TheoremInvariantMeasuresShortVersion}. After that, we prove some further results for maps which are good with respect to iterates, see Definition \ref{DefinitionGoodIterateMaps}, to prepare for discussion about topological and measure theoretic entropy for such maps in the next section.   

We now discuss some properties of invariant positive strong submeasures. We note that in the case $f$ is a continuous map and $\mu$ is a measure, then $f_*(\mu)=\mu$ if and only if either $f_*(\mu)\geq \mu$ or $f_*(\mu )\leq \mu$. In the general case we are concerned here, the properties $f_*(\mu )=\mu$,  $f_*(\mu )\geq \mu$ and $f_*(\mu )\leq \mu$ are in general not the same. However, these properties are very much related. For example, it can be checked that if $\mu$ is a measure and $f_*(\mu)\leq \mu$, then $f_*(\mu)=\mu$. On the other hand, we will see that positive strong submeasures $\mu$ having the property that $f_*(\mu)\geq \mu$ appear very naturally in dynamics. If we apply Cesaro's average procedure for meromorphic maps, we obtain positive strong submeasures $\mu$ with $f_*(\mu)\geq \mu$. Likewise, if we have a positive strong submeasure $\hat{\mu}$ on $\Gamma _{f,\infty}$ which is $\phi _f$ - invariant, then $\mu =(\pi _1)_*\hat{\mu}$ satisfies $f_*(\mu)\geq \mu$. Using this, we obtain canonical invariant positive strong submeasures to those $\mu$ which satisfy either $f_*(\mu)\geq \mu$ or $f_*(\mu )\leq \mu$ in Theorem  \ref{TheoremInvariantMeasuresShortVersion}. We next detail proofs of these claims.   

 \begin{proof}[Proof of Theorem \ref{TheoremInvariantMeasuresShortVersion}]

1) We note that for $\mu _n=\frac{1}{n}\sum _{j=0}^n(f_*)^j(\mu _0)$, by part 6 of Theorem \ref{TheoremSubmeasurePushforwardMeromorphic} we have
\begin{eqnarray*}
f_*(\mu _n)-\mu _n&=&f_*(\frac{1}{n}\sum _{j=0}^n(f_*)^j(\mu _0)) -  \frac{1}{n}\sum _{j=0}^n(f_*)^j(\mu _0)\\
&\geq& \frac{1}{n}\sum _{j=0}^n(f_*)^{j+1}(\mu _0)-\frac{1}{n}\sum _{j=0}^n(f_*)^j(\mu _0)\\
&=&\frac{1}{n}(f_*)^{n+1}(\mu _0)-\frac{1}{n}\mu _0,
\end{eqnarray*}
and the latter converges to $0$ in $SM(X)$. Therefore, if $\mu =\lim _{j\rightarrow\infty}\mu _{n_j}$, then any cluster point of $f_*(\mu _{n,j})$ is $\geq \mu$.  Hence, by part 3 of Theorem \ref{TheoremSubmeasurePushforwardMeromorphic} we have $f_*(\mu)\geq \mu$. 



2) Since $\mu _n=(f_*)^n(\mu _0)$ is a decreasing sequence, it has a limit which we denote by $Inv(\geq \mu _0)$, which is an element of $SM^+(X)$. Moreover, the sequence $f_*(\mu _n)=\mu _{n+1}$  also converges to $Inv(\geq \mu _0)$. We then have that $f_*(Inv(\geq \mu _0))\geq Inv(\geq \mu _0)$ by part 3 of Theorem \ref{TheoremSubmeasurePushforwardMeromorphic}. On the other hand, since $Inv (\geq \mu _0)\leq \mu _n$ for all $n$, it follows that $f_*Inv (\geq \mu _0)\leq f_*(\mu _n)$ for all $n$, and hence $f_*Inv (\geq \mu _0)\leq \lim _n f_*(\mu _n)=Inv (\geq \mu _0)$. Combining all inequalities we obtain $f_*(Inv (\geq \mu _0))=Inv (\geq \mu _0)$. 

To finish the proof, we will show that if $\mu \leq \mu _0$ and $f_*(\mu )=\mu$, then $\mu \leq Inv (\geq \mu _0)$. In fact, under the assumptions about $\mu$ we have
\begin{eqnarray*}
\mu =(f_*)^n(\mu) \leq (f_*)^n(\mu _0)
\end{eqnarray*}
for all positive integers $n$. Hence, by taking the limit we obtain that $\mu \leq Inv (\geq \mu _0)$.

 3) We can assume that the mass of $\mu _0$ is $1$. The set $\mathcal{G}:=\{\mu \in SM^+(X):~ \mu \geq \mu _0,~f_*(\mu )= \mu \}$ is non-empty because $\sup _{x\in X}\delta _x$ is one of its elements.

Now let $\mathcal{H}=\{\nu\in M^+(X):~\nu \leq \mu, ~\forall \mu \in \mathcal{G}\}$. Note that $\mathcal{H}$  is non-empty because if $\nu \leq \mu _0$, then $\nu \in \mathcal{H}$. We define $Inv(\geq \mu _0)=\sup _{\nu \in \mathcal{H}}\nu$. Then $Inv(\geq \mu _0)$ is in $SM^+(X)$, and it is the largest element of $SM^+(X)$ which is smaller than $\mu$ for all $\mu \in \mathcal{G}$. Moreover, by construction we see easily that $Inv(\geq \mu _0)\geq \mu _0$. 

We now finish the proof by showing that $f_*(Inv(\geq \mu _0))=Inv(\geq \mu _0)$. First, we show that $Inv(\geq \mu _0)\geq f_*(Inv(\geq \mu _0))$. In fact, it is easy to check that if $\nu \in M^+(X)$ is so that $\nu \leq Inv(\geq \mu _0)$, then $\nu \in \mathcal{H}$. Hence, by part 5 of Theorem \ref{TheoremSubmeasurePushforwardMeromorphic} we have that for all $\mu \in \mathcal{G}$
\begin{eqnarray*}
f_*(Inv(\geq \mu _0))=\sup _{\nu \in \mathcal{H}}f_*(\nu )\leq f_*(\mu)=\mu . 
\end{eqnarray*}
Therefore, by the definition of $Inv(\geq \mu _0)$, we get that $f_*(Inv(\geq \mu _0))\leq Inv(\geq \mu _0)$. Therefore, by part 2 above, we have that if $\mu _{\infty}$ is the limit point of $\mu _n=(f_*)^n(Inv(\geq \mu _0))$, then $f_*(\mu _{\infty})=\mu _{\infty}$. Moreover $Inv(\geq \mu _0)\geq \mu _{\infty}$.

On the other hand, from the fact that $Inv(\geq \mu _0)\geq \mu _0$ and $f_*(\mu _0)\geq \mu _0$ we have
\begin{eqnarray*}
\mu _n =(f_*)^n(Inv(\geq \mu _0))\geq (f_*)^n(\mu _0)\geq \mu _0,
\end{eqnarray*}
for all $n$. Taking the limit we obtain $\mu _{\infty}\geq \mu _0$, that is $\mu _{\infty}\in \mathcal{G}$. Hence, by definition $\mu _{\infty}\geq Inv(\geq \mu _0)$. 

From the above two inequalities, we deduce that $ Inv(\geq \mu _0) =\mu _{\infty}$, which implies that $ Inv(\geq \mu _0)$ is the smallest element of $\mathcal{G}$.

\end{proof}

Now we consider the case where $f$ is good with respect to iterates, see Definition \ref{DefinitionGoodIterateMaps}. We use the same notations $\Gamma _{f,\infty}$, $\sigma _f:~\Gamma _{f,\infty}\rightarrow \Gamma _{f,\infty}$ and $\pi _1:\Gamma _{f,\infty}\rightarrow X$ as in the paragraph after Definition \ref{DefinitionGoodIterateMaps}. The next results relate invariant submeasures of $f$ and those of $\sigma _f$. 

\begin{proposition}
If $\hat{\mu}$ is a positive strong submeasure on $\Gamma _{f,\infty}$, then 
\begin{eqnarray*}
f_* (\pi _1)_*(\hat{\mu})\geq (\pi _1)_*(\phi _f)_*(\hat{\mu}).
\end{eqnarray*}
In general, we have that $f_* (\pi _1)_*(\hat{\mu})\not= (\pi _1)_*(\phi _f)_*(\hat{\mu})$, even if $\hat{\mu}$ is a measure. 
\label{PropositionKeyEntropy}\end{proposition}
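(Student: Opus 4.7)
The plan is to exploit the identity $\pi_1 \circ \phi_f = f \circ \pi_1$, viewed as continuous open-dense defined maps $\Gamma_{f,\infty} \dashrightarrow X$, and to apply part 2 of Theorem \ref{TheoremSubmeasurePushforwardMeromorphic} to its two factorisations. That composition rule yields equality when the outer map is continuous on the whole space but only an inequality otherwise, and this asymmetry produces precisely the asserted inequality.

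First I would verify the identity on a suitable open dense subset of $\Gamma_{f,\infty}$. The candidate is $U := \pi_1^{-1}(OpenDom(f))$, which is open (as $\pi_1$ is continuous and $I(f)$ is closed) and dense (it contains $\pi_1^{-1}(\Omega_{f,\infty})$, which is dense by the very definition of $\Gamma_{f,\infty}$). For any $\hat{x} = (x_1,x_2,\ldots)\in U$, approximating $\hat{x}$ by orbits of points in $\Omega_{f,\infty}$ and using continuity of $f$ near $x_1$ gives $x_2 = f(x_1)$, hence $\pi_1(\phi_f(\hat{x})) = x_2 = f(\pi_1(\hat{x}))$. So the two open-dense defined maps $\pi_1 \circ \phi_f$ and $f \circ \pi_1$ have the same graph closure in $\Gamma_{f,\infty}\times X$, and therefore induce the same pushforward operator on $SM^+(\Gamma_{f,\infty})$.

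Now I would apply part 2 of Theorem \ref{TheoremSubmeasurePushforwardMeromorphic} twice. Since $\phi_f$ and $\pi_1$ are both continuous on the whole of $\Gamma_{f,\infty}$, the equality case gives $(\pi_1)_*(\phi_f)_*(\hat{\mu}) = (\pi_1 \circ \phi_f)_*(\hat{\mu})$. For the other composition $f \circ \pi_1$, only $\pi_1$ is continuous everywhere while $f$ has indeterminacy, so only the general inequality $f_*(\pi_1)_*(\hat{\mu}) \geq (f \circ \pi_1)_*(\hat{\mu})$ is available. Combining these with the identification $(f \circ \pi_1)_* = (\pi_1 \circ \phi_f)_*$ from the previous step yields the desired inequality.

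To exhibit strictness I would reuse the Cremona involution $J$ of Example 2. Pick $p_n\in \Omega_{J,\infty}$ with $p_n\to e_0$ and $J(p_n)\to y_0$ for some $y_0\in \Sigma_0\setminus\{e_1,e_2\}$ itself lying in $\Omega_{J,\infty}$; then $\hat{p}_n := (p_n, J(p_n),\ldots)$ converges in $\Gamma_{J,\infty}$ to some $\hat{p}$ with $\pi_1(\hat{p}) = e_0$ and $\pi_1(\phi_J(\hat{p})) = y_0$. For $\hat{\mu} = \delta_{\hat{p}}$, continuity of $\phi_J$ and $\pi_1$ yields $(\pi_1)_*(\phi_J)_*\hat{\mu} = \delta_{y_0}$, while Example 2 computes $J_*(\pi_1)_*\hat{\mu} = J_*\delta_{e_0}: \varphi\mapsto \sup_{\Sigma_0}\varphi$, which strictly dominates $\delta_{y_0}$. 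The main point of care is just the bookkeeping of open-density hypotheses on $U$; once that is in place, everything reduces to the equality-versus-inequality contrast in Theorem \ref{TheoremSubmeasurePushforwardMeromorphic} part 2.
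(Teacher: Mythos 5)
Your argument is correct and rests on the same mechanism as the paper's proof --- the semi-conjugacy $\pi_1\circ\phi_f=f\circ\pi_1$ on the dense set $\pi_1^{-1}(OpenDom(f))$, combined with the fact that $\varphi\circ\pi_1\circ\phi_f$ is continuous on all of $\Gamma_{f,\infty}$ while $f^*(\varphi)$ is only upper-semicontinuous --- but you package it differently. The paper first reduces to the case where $\hat{\mu}$ is a measure (via part 5 of Theorem \ref{TheoremSubmeasurePushforwardMeromorphic}) and then unwinds the definition of $f_*(\pi_1)_*(\hat{\mu})(\varphi)$, showing directly that $\pi_1^*(\psi)\geq\phi_f^*\pi_1^*(\varphi)$ for every competitor $\psi\in C^0(X,\geq f^*(\varphi))$. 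You instead identify $(f\circ\pi_1)_*$ with $(\pi_1\circ\phi_f)_*$ as pushforward operators (legitimate by part 2 of Proposition \ref{PropositionUpperSemicontinuousExtension}, since the two maps agree and are continuous on a common open dense set) and then invoke the inequality and the equality case of part 2 of Theorem \ref{TheoremSubmeasurePushforwardMeromorphic}. This yields a shorter deduction valid for arbitrary positive strong submeasures without the reduction-to-measures step; the cost is only that the real work is hidden inside the proof of the composition theorem, which is the same density argument.

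Two small points. First, in your strictness example the side condition that $y_0\in\Sigma_0\setminus\{e_1,e_2\}$ lie in $\Omega_{J,\infty}$ is impossible: $J$ contracts $\Sigma_0\setminus\{e_1,e_2\}$ to $e_0\in I(J)$, so no such $y_0$ has a well-defined infinite orbit. Fortunately the condition is also unnecessary --- you only need $\hat{p}=(e_0,y_0,e_0,y_0,\ldots)$ to lie in $\Gamma_{J,\infty}$, which it does as a limit of the orbit points $\hat{p}_n$ --- so the example survives after deleting that clause. Second, the paper's own non-equality argument is more economical: for $\hat{x}$ lying over a point $x\in I(f)$ with $f_*(\delta_x)$ not a measure, the left-hand side fails to be a measure while the right-hand side is one. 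Your concrete computation with the Cremona involution proves the stronger statement that the inequality can be strict between comparable submeasures, which is a bonus consistent with the paper's Example 2.
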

\begin{proof}[Proof of Proposition \ref{PropositionKeyEntropy}] By part ii of Theorem \ref{TheoremSubmeasurePushforwardMeromorphicShortVersion} and the fact that $\phi _f$ and $\pi _1$ are continuous maps, we need to consider only the case where $\hat{\mu}$ is a positive measure. We need to show that for all $\varphi \in C^0(X)$ 
\begin{eqnarray*}
f_* (\pi _1)_*(\hat{\mu })(\varphi )\geq (\pi _1)_*(\phi _f)_*(\hat{\mu} )(\varphi ). 
\end{eqnarray*}
By definition 
\begin{eqnarray*}
f_* (\pi _1)_*(\hat{\mu} )(\varphi )=\inf _{\psi \in C^0(X,\geq f^*(\varphi ))}(\pi _1)_*(\hat{\mu} )(\psi ) = \inf _{\psi \in C^0(X,\geq f^*(\varphi ))} \hat{\mu} (\pi _1^*(\psi )). 
\end{eqnarray*}
Let $U=X\backslash I(f)$. Then $U$ is an open dense subset of $X$ and $\pi _1^{-1}(U)$ is an open dense subset of $\Gamma _{f,\infty}$. Note that $\pi _1^*\circ f^*(\varphi )=\phi _f^*\pi _1^*(\varphi )$ on $\pi _1^{-1}(U)$. Moreover, note that the function $\phi _f^*\pi _1^*(\varphi )$ is continuous on the whole $\Gamma _{f,\infty}$. Therefore, for every $\psi \in C^0(X,\geq f^*(\varphi ))$, we have that $\pi _1^*(\psi )\geq \phi _f^*\pi _1^*(\varphi )$. This implies that
\begin{eqnarray*}
f_* (\pi _1)_*(\hat{\mu} )(\varphi )\geq  \hat{\mu} (\phi _f^*\pi _1^*(\varphi ))=(\pi _1)_*(\phi _f)_*(\hat{\mu} )(\varphi ). 
\end{eqnarray*}

If we choose an example $f:X\dashrightarrow X$ having a point $x\in I(f)$ such that $f_*(\delta _x)$ is not a measure, then for any $\hat{x}\in \Gamma _{f,\infty}$  so that $\pi _1(\hat{x})=x$ and $\hat{\mu}=\delta _{\hat{x}}$, we have $f_* (\pi _1)_*(\hat{\mu })\not= (\pi _1)_*(\phi _f)_*(\hat{\mu} )$, since the LHS is not a measure while the RHS is a measure. 
\end{proof}

\begin{theorem} Let $f:X\dashrightarrow X$ be a continuous open-dense defined selfmap of a compact metric space, which is good with respect to iterates. Let $0\not= \mu _0\in SM^+(X)$.

1) Let $\hat{\mu}$ be a $\phi _f$-invariant positive strong submeasure. Then $\mu =(\pi _1)_*(\hat{\mu})$ satisfies: $f_*(\mu )\geq \mu$.
 

2) If $f_*(\mu _0)=\mu _0$, then there exists a non-zero measure $\hat{\mu _0}$ on $\Gamma _{f,\infty}$ so that $(\phi _f)_*(\hat{\mu _0})=\hat{\mu _0}$, $||\hat{\mu _0}||=||\mu _0||$ and $(\pi _1)_*(\hat{\mu _0})\leq \mu _0$. Moreover, the set $\{\hat{\mu} \in SM^+(\Gamma _{f,\infty}):~(\pi _1)_*(\hat{\mu})\leq \mu ,~(\phi _f)_*(\hat{\mu} )= \hat{\mu}\}$ has a largest element, denoted by $Inv(\pi _1, \mu)$. 


\label{TheoremInvariantMeasures}\end{theorem}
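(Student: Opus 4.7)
For part (1), the claim is immediate from Proposition~\ref{PropositionKeyEntropy}: if $(\phi_f)_*(\hat{\mu}) = \hat{\mu}$, then
\[
f_*(\mu) = f_*(\pi_1)_*(\hat{\mu}) \geq (\pi_1)_*(\phi_f)_*(\hat{\mu}) = (\pi_1)_*(\hat{\mu}) = \mu.
\]

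For part (2), I would treat its two claims separately. The plan for producing the non-zero invariant measure $\hat{\mu}_0$ is a three-step construction. Using Theorem~\ref{TheoremHahnBanach} and the weak sequential compactness of $\mathcal{G}(\mu_0)$ provided by Theorem~\ref{TheoremSubmeasureBasicProperty}, one first produces a measure $\chi^* \in \mathcal{G}(\mu_0)$ attaining the maximal mass $\chi^*(1) = \mu_0(1) = ||\mu_0||$. Next, lift $\chi^*$ to a positive measure $\hat{\chi}^*$ on $\Gamma_{f,\infty}$ with $(\pi_1)_*(\hat{\chi}^*) = \chi^*$, using that $\pi_1$ is a continuous surjection between compact metric spaces (for instance via a Borel section of $\pi_1$ obtained from the Kuratowski--Ryll-Nardzewski selection theorem). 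Then consider the set
\[
C := \{\hat{\nu} \in M^+(\Gamma_{f,\infty}) :~ \hat{\nu}(1) = ||\mu_0||,~ (\pi_1)_*(\hat{\nu}) \leq \mu_0\},
\]
which is convex and weakly compact, and is non-empty because it contains $\hat{\chi}^*$. Proposition~\ref{PropositionKeyEntropy}, combined with monotonicity of $f_*$ and the invariance $f_*(\mu_0) = \mu_0$, ensures that $(\phi_f)_*$ sends $C$ into itself; because $\phi_f$ is continuous, this map is weakly continuous and affine on $C$. A fixed point $\hat{\mu}_0 \in C$ is then produced either by Cesaro averaging of $(\phi_f)^j_*(\hat{\chi}^*)$ (the averages stay in $C$, any weak cluster point is $\phi_f$-invariant by continuity of $\phi_f$, and linearity passes to the limit so the cluster point is again a measure) or directly by the Markov--Kakutani fixed point theorem.

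For the largest-element claim, let $\mathcal{F}$ denote the set in the statement and define
\[
\hat{\rho}(\varphi) := \sup_{\hat{\mu} \in \mathcal{F}}\hat{\mu}(\varphi), \qquad \varphi \in C^0(\Gamma_{f,\infty}).
\]
Every $\hat{\mu} \in \mathcal{F}$ is positive with $||\hat{\mu}|| = \hat{\mu}(1) = (\pi_1)_*(\hat{\mu})(1) \leq \mu_0(1)$, so $\hat{\rho}$ is well-defined and is a positive strong submeasure of norm at most $||\mu_0||$. Because $\phi_f$ and $\pi_1$ are continuous, pushforward of a submeasure is given by evaluation against the (continuous) pullback, and this operation commutes with suprema:
\[
(\phi_f)_*\hat{\rho}(\varphi) = \hat{\rho}(\phi_f^*(\varphi)) = \sup_{\hat{\mu} \in \mathcal{F}}(\phi_f)_*\hat{\mu}(\varphi) = \sup_{\hat{\mu}\in\mathcal{F}}\hat{\mu}(\varphi) = \hat{\rho}(\varphi),
\]
and similarly $(\pi_1)_*\hat{\rho} \leq \mu_0$. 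Thus $\hat{\rho} \in \mathcal{F}$, and by construction dominates every element; we take $\mathrm{Inv}(\pi_1,\mu_0) := \hat{\rho}$.

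The main technical obstacle is the measure-lifting step above: producing a positive Borel measure on $\Gamma_{f,\infty}$ that pushes down to the prescribed $\chi^*$. Beyond this, the argument is soft, relying only on weak compactness, Proposition~\ref{PropositionKeyEntropy}, and the linearity and sup-preservation of pushforward under continuous maps.
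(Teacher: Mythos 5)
Your proposal is correct and follows essentially the same route as the paper: part (1) is the same one-line application of Proposition~\ref{PropositionKeyEntropy}, the invariant measure in part (2) is obtained by lifting a measure dominated by $\mu_0$ to $\Gamma_{f,\infty}$ and Cesaro-averaging under the continuous map $\phi_f$ (with Proposition~\ref{PropositionKeyEntropy} keeping the pushdowns below $\mu_0$), and $Inv(\pi_1,\mu)$ is the supremum of the family, shown invariant because pushforward by a continuous map commutes with suprema. The only substantive difference is cosmetic and in your favour: you attain the mass $\|\mu_0\|$ by weak compactness of $\{\chi\in M^+(X):\chi\le\mu_0\}$ and make the measure-lifting step along $\pi_1$ explicit, whereas the paper reaches full mass via a diagonal argument over a sequence $\mu_n\le\mu_0$ with $\|\mu_n\|\to\|\mu_0\|$ and leaves the existence of a lift implicit.
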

\begin{proof}
1) This follows immediately from Proposition \ref{PropositionKeyEntropy}. 


2) Choose any non-zero measure $\nu _0$ on $\Gamma _{f,\infty }$ so that $(\pi _1)_*(\nu _0)\leq \mu$. Then  by Proposition  \ref{PropositionKeyEntropy} we have $$(\pi _1)_*(\phi _f)_*(\nu _0)\leq f_*(\pi _1)_*(\nu _0)\leq f_*(\mu) =\mu.$$
Hence any cluster point $\nu$ of the Cesaro's averages $\frac{1}{n}\sum _{j=0}^n(\phi _f)_*(\nu _0)$ will satisfy $(\pi _1)_*(\nu )\leq \mu$. Now to construct on ethat has the same mass as $\mu$, we choose a sequence of positive measures $\mu _n$ with the property $\mu _n\leq \mu$ and $||\mu _n||\rightarrow ||\mu ||$. For each $\mu _n$, construct a corresponding $\nu _n$ on $\Gamma _{f,\infty}$ as above. Then any cluster point of $\nu$ will satisfy what desired. 

Since $\phi _f$ is continuous, it follows that $(\phi _f)_*(\nu )=\nu$. If we define $Inv(\pi _1,\mu)=\sup \{\hat{\mu} \in SM^+(\Gamma _{f,\infty}):~(\pi _1)_*(\hat{\mu})\leq \mu ,~(\phi _f)_*(\hat{\mu} )= \hat{\mu}\}$, then it is also an element of $SM^+(\Gamma _{f,\infty})$. Moreover, since $\phi _f$ is continuous, we can check easily that $Inv(\pi _1,\mu)$  is $\phi _f$-invariant. 

\end{proof}

In summary, we have several canonical ways to associate invariant positive strong submeasures on $X$ or $\Gamma _{f,\infty}$. If $\hat{\mu}$ is an invariant positive strong submeasure on $\Gamma _{f,\infty}$, then we obtain an invariant positive strong submeasure $Inv(\geq (\pi _1)_*(\hat{\mu}))$ on $X$. Conversely, if $\mu$ is an invariant positive strong submeasure on $X$, then we obtain an invariant positive strong submeasure $Inv(\pi _1, \mu )$ on $\Gamma _{f,\infty}$. If $\mu$ is any positive strong submeasure on $X$ and $\mu _{\infty}$ is any cluster point of Cesaro's average $\frac{1}{n}\sum _{j=0}^n(f_*)^j(\mu _{\infty})$, then we obtain an invariant positive strong submeasure $Inv (\geq \mu _{\infty})$.

\section{Topological and measure theoretic entropy}

In this section we define topological and measure theoretic entropy for continuous open-dense defined selfmaps of compact metric spaces. We start with the case of continuous maps of compact Hausdorff spaces to provide some intuitions and ideas. 

\subsection{The case of continuous maps on compact Hausdorff spaces}
 
 We first recall relevant definitions about entropy of an invariant measure and the Variational Principle, see \cite{goodwyn, goodman}. Let $X$ be a compact Hausdorff space and $f:X\rightarrow X$ a continuous map. Let $\mu$ be a probability Borel-measure on $X$ which is invariant by $f$, that is $f_*(\mu )=\mu$. We next define the entropy $h_{\mu}(f)$ of $f$ with respect to $\mu$. We say that a finite collection of Borel sets $\alpha $ is a $\mu$-partition if $\mu (\bigcup _{A\in \alpha }A)=1$ and $\mu (A\cap B)=0$ whenever $A,B\in \alpha$ and $A\not= B$. Given a $\mu$-partition $\alpha$, we define
 \begin{eqnarray*}
 H_{\mu}(\alpha )=-\sum _{A\in \alpha}\mu (A)\log \mu (A). 
 \end{eqnarray*}
 If $\alpha$ and $\beta $ are $\mu$-partitions, then $\alpha V \beta :=\{A\cap B:$ $A\in \alpha$ and $B\in \beta \}$ is also a $\mu$-partition. Similarly, for all $n$ the collection $V_{i=0}^{n-1}f^{-(i)}(\alpha )$ $:=\{A_0\cap A_1\cap \ldots \cap A_{n-1}:$ $A_0\in \alpha ,$ $A_1\in f^{-1}(\alpha ),$ $\ldots ,$ $A_{n-1}\in f^{-(n-1)}(\alpha )\}$ is also a $\mu $-partition. We define: 
 \begin{equation}
 h_{\mu }(f,\alpha )=\lim _{n\rightarrow\infty}\frac{1}{n}H_{\mu}(V_{i=0}^{n-1}f^{-(i)}(\alpha ))=\inf _n\frac{1}{n}H_{\mu}(V_{i=0}^{n-1}f^{-(i)}(\alpha )),
 \label{EquationMeasureEntropy}\end{equation} 
(the above limit always exists) and 

$h_{\mu}(f):=$ $\sup \{h_{\mu (\alpha )}:$ $\alpha$ runs all over $\mu$-partitions$\}$. 

Recall (see \cite{rudin}) that a Borel-measure $\mu$ of finite mass is regular if for every Borel set $E$: i) $\mu (E)=\inf \{\mu (V):$ $V$ open, $E\subset V\}$, and ii) $\mu (E)=\sup \{\mu (K):$ $K$ compact, $K\subset E\}$. Note that if $X$ is a compact metric space, then any Borel measure of finite mass is regular.   

The Variational Principle, an important result on dynamics of continuous maps, is as follows \cite{goodwyn, goodman}.
\begin{theorem}
Let $X$ be a compact Hausdorff space and $f:X\rightarrow X$ a continuous map. Then 

$h_{top}(f)=$ $\sup \{h_{\mu }(f):$ $\mu$ runs all over probability regular Borel measures $\mu$ invariant by $f\}$. 
\label{TheoremVariationalPrincipleMeasure}\end{theorem}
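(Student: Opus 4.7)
The plan is to establish the two inequalities separately: first the easy direction (Goodwyn's inequality) that $h_\mu(f)\leq h_{top}(f)$ for every $f$-invariant regular Borel probability $\mu$, then the harder direction (Goodman's inequality) that the supremum of measure entropies reaches $h_{top}(f)$. Throughout I would work with the open-cover formulation of $h_{top}(f)$ (or equivalently, when $X$ is metric, the $(n,\epsilon)$-separated/spanning-sets formulation), so that the Hausdorff and metric cases can be treated uniformly; the general Hausdorff case can in fact be reduced to the metric case by passing to the topological factor generated by a countable separating family in $C^0(X)$.

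For Goodwyn's inequality, I would fix a finite $\mu$-measurable partition $\alpha=\{A_1,\ldots,A_k\}$ and use regularity of $\mu$ to choose compact sets $K_i\subset A_i$ with $\mu(A_i\setminus K_i)<\eta$, then form the open cover $\mathcal{U}=\{U_0,U_1,\ldots,U_k\}$ where $U_i$ is a small open neighbourhood of $K_i$ meeting no $K_j$ with $j\neq i$, and $U_0$ is an open neighbourhood of $X\setminus\bigcup_i K_i$ of small $\mu$-measure. Each atom of $\bigvee_{i=0}^{n-1}f^{-i}(\alpha)$ is contained in at most $2^n$ atoms of $\bigvee_{i=0}^{n-1}f^{-i}(\mathcal{U})$, since every $A_i$ is covered by the two elements $U_0,U_i$ of $\mathcal{U}$. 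Combining this counting with the sub-additivity $H_\mu(\alpha\vee\beta)\leq H_\mu(\alpha)+H_\mu(\beta)$ and the concavity bound $-\sum p_i\log p_i\leq \log N$, one obtains $h_\mu(f,\alpha)\leq h_{top}(f,\mathcal{U})+\epsilon(\eta)\leq h_{top}(f)+\epsilon(\eta)$; letting $\eta\to 0$ and taking the supremum over $\alpha$ gives $h_\mu(f)\leq h_{top}(f)$.

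For Goodman's direction I would work in the compact metric setting. For each $\epsilon>0$, choose $(n,\epsilon)$-separated sets $E_n$ with $\tfrac{1}{n}\log|E_n|\to h_{top}(f,\epsilon)$, form the empirical measures $\sigma_n=\tfrac{1}{|E_n|}\sum_{x\in E_n}\delta_x$, and pass to a weak-$*$ cluster point $\mu$ of the Cesaro averages $\mu_n=\tfrac{1}{n}\sum_{i=0}^{n-1}f_*^i\sigma_n$; since $f$ is continuous, $f_*$ is weakly continuous on probability measures and the Krylov--Bogolyubov argument shows $\mu$ is $f$-invariant. Select a finite measurable partition $\xi$ with $\mathrm{diam}(C)<\epsilon$ for every $C\in\xi$ and with $\mu(\partial C)=0$ for every $C$ (possible because for each fixed centre only countably many radii fail to have $\mu$-null sphere). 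By $(n,\epsilon)$-separation every atom of $\bigvee_{i=0}^{n-1}f^{-i}(\xi)$ contains at most one point of $E_n$, so $H_{\sigma_n}(\bigvee_{i=0}^{n-1}f^{-i}(\xi))=\log|E_n|$. A block decomposition $n=qm+r$ combined with concavity of $-x\log x$ and sub-additivity of $H$ then yields
\begin{equation*}
\tfrac{1}{q}H_{\mu_n}\!\left(\bigvee_{i=0}^{q-1}f^{-i}(\xi)\right)\geq \tfrac{1}{n}\log|E_n|-O(q/n)-O(\log(\#\xi)/n);
\end{equation*}
the $\mu$-nullity of the boundaries of $\xi$ lets one pass to the weak-$*$ limit, producing $\tfrac{1}{q}H_\mu(\bigvee_{i=0}^{q-1}f^{-i}(\xi))\geq h_{top}(f,\epsilon)-O(1/q)$, and then letting $q\to\infty$ followed by $\epsilon\to 0$ concludes.

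The principal obstacle is the Goodman half, and specifically the block-decomposition step: converting the information $H_{\sigma_n}(\bigvee_{i=0}^{n-1}f^{-i}(\xi))=\log|E_n|$ on a long cylinder into a lower bound on the per-symbol entropy of $\mu$ with respect to a shorter refinement requires the sub-additivity of $H$, concavity of $-x\log x$, and a careful averaging over cyclic shifts, and the whole estimate only survives the passage to the weak-$*$ limit because $\mu\mapsto H_\mu(\bigvee_{i=0}^{q-1}f^{-i}(\xi))$ is continuous at $\mu$ precisely when the boundaries of $\xi$ are $\mu$-null.
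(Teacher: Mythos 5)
The paper does not actually prove this theorem: it is quoted as the classical Variational Principle with the proof delegated to the cited works of Goodwyn and Goodman, and the text moves on immediately after the statement. So there is no internal proof to compare against; what you have written is a reconstruction of the standard argument (essentially Misiurewicz's streamlined proof), and as a strategy it is the right one. Two places in your sketch would need repair before it compiles into a proof. First, in Goodwyn's half your counting runs in the wrong direction: knowing that each atom of $\bigvee_{i=0}^{n-1}f^{-i}(\alpha)$ lies in a union of at most $2^n$ elements of $\bigvee_{i=0}^{n-1}f^{-i}(\mathcal{U})$ does not bound the number of atoms, hence does not bound $H_\mu\bigl(\bigvee_{i=0}^{n-1}f^{-i}(\alpha)\bigr)$ by $n\log 2+\log N\bigl(\bigvee_{i=0}^{n-1}f^{-i}(\mathcal{U})\bigr)$. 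What is needed is the reverse estimate --- each element of a minimal subcover meets at most $2^n$ atoms --- and for that one must replace $\alpha$ by the auxiliary partition $\beta=\{K_1,\dots,K_k,\,X\setminus\bigcup_iK_i\}$, paying the conditional entropy $H_\mu(\alpha\mid\beta)\le\epsilon(\eta)$, and use a cover such as $\{X\setminus\bigcup_{j\ne i}K_j\}_i$, each of whose elements meets only two atoms of $\beta$. This yields $h_\mu(f,\alpha)\le h_{top}(f)+\log 2+\epsilon(\eta)$, and the extra $\log 2$ (absent from your displayed bound) is removed by applying the estimate to $f^m$ and letting $m\to\infty$. Second, the reduction of the compact Hausdorff case to the metric case is not a one-liner: a countable subfamily of $C^0(X)$ separates points only when $X$ is already metrizable, so what one obtains is a metrizable \emph{factor}; one must then verify that $h_{top}(f)$ is the supremum of the topological entropies over such factors and that invariant measures on factors lift to invariant measures on $X$ without decreasing entropy (Goodwyn's inequality, by contrast, can be run directly on the Hausdorff space). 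The Goodman half of your sketch --- separated sets, empirical measures, a partition with $\mu$-null boundaries, and the block decomposition with averaging over cyclic shifts --- is the standard argument and is correct as outlined.
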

It is known, however, that the supremum in the theorem may not be attained by any such invariant measure $\mu$, even if $X$ is a compact metric space. In contrast, here, we show that with an appropriate definition of entropy, positive strong submeasures also fit naturally with the Variational Principle and provide the desired maximum. 

First, we note that even though we defined in previous sections strong submeasures only for compact metric spaces, this definition extends easily to the case of compact Hausdorff spaces. We still have the Hahn-Banach theorem on compact Hausdorff spaces, and hence we can define a strong submeasure $\mu$ by one of the following two equivalent definitions: i) $\mu$ is a bounded and sublinear operator on $C^0(X)$, or ii) $\mu =\sup _{\nu \in \mathcal{G}}\nu$, where $\mathcal{G}$ is a non-empty collection of signed regular Borel-measures on $X$ whose norms are uniformly bounded from above. Such a strong submeasure $\mu$ is positive if moreover $\mathcal{G}$ in ii) can be chosen to consist of only positive measures. 

As in the previous sections, we can then define the pushforward of $\mu$ by a continuous map $f:X\rightarrow X$. We have the following property, which is stronger than 5) of Theorem \ref{TheoremSubmeasurePushforwardMeromorphic}. The proof of the result is similar to, and simpler than that of Theorem \ref{TheoremSubmeasurePushforwardMeromorphic}, and hence is omitted. 
\begin{lemma}
Let $X$ be a compact Hausdorff space and $f:X\rightarrow X$ a continuous map. Let $\mu$ be a positive strong submeasure on $X$, and assume that $\mathcal{G}$ is any non-empty collection of positive measures on $X$ such that  $\mu =\sup _{\nu \in \mathcal{G}}\nu$. Then
\begin{eqnarray*}
f_*(\mu )=\sup _{\nu \in \mathcal{G}}f_*(\nu ). 
\end{eqnarray*}
\label{LemmaPushforwardStrongSubmeasure}\end{lemma}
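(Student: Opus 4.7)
The plan is to exploit the fact that for a \emph{continuous} selfmap $f$, the pullback $f^{*}(\varphi)=\varphi\circ f$ of any $\varphi\in C^{0}(X)$ is again continuous, which collapses the infimum in the pushforward formula~\eqref{EquationSubmeasurePushforwardMeromorphic}. Concretely, for any positive strong submeasure $\mu$ and any $\psi\in C^{0}(X,\geq f^{*}(\varphi))$ the positivity of $\mu$ gives $\mu(\psi)\geq \mu(f^{*}(\varphi))$, while $f^{*}(\varphi)$ itself lies in $C^{0}(X,\geq f^{*}(\varphi))$; hence the infimum is attained at $\psi=f^{*}(\varphi)$ and
$$f_{*}(\mu)(\varphi)=\mu(f^{*}(\varphi))\qquad\text{for every }\varphi\in C^{0}(X).$$

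Once this reduction is in place, the lemma becomes essentially a tautology. For each $\varphi\in C^{0}(X)$, the hypothesis $\mu=\sup_{\nu\in\mathcal{G}}\nu$ means, by Theorem~\ref{TheoremHahnBanach}(2), that $\mu(\psi)=\sup_{\nu\in\mathcal{G}}\nu(\psi)$ for every $\psi\in C^{0}(X)$; applied to the continuous function $\psi=f^{*}(\varphi)$ and combined with the same reduction applied to each $\nu\in\mathcal{G}$ (which is a positive measure and hence a positive strong submeasure), this yields
$$f_{*}(\mu)(\varphi)=\mu(f^{*}(\varphi))=\sup_{\nu\in\mathcal{G}}\nu(f^{*}(\varphi))=\sup_{\nu\in\mathcal{G}}f_{*}(\nu)(\varphi),$$
which is the pointwise identity asserting $f_{*}(\mu)=\sup_{\nu\in\mathcal{G}}f_{*}(\nu)$ in $SM^{+}(X)$.

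There is essentially no obstacle to overcome beyond recognizing the above reduction; the argument simply does not need the upper-semicontinuous extension step of Proposition~\ref{PropositionUpperSemicontinuousExtension}. It is precisely this step that prevents the same statement from holding for open-dense defined maps: when $f$ is only continuous on a dense open subset, $f^{*}(\varphi)$ is merely upper-semicontinuous, the infimum over continuous majorants $\psi\geq f^{*}(\varphi)$ genuinely enters the definition, and this infimum cannot in general be interchanged with the supremum over $\mathcal{G}$ -- as Example~2 in Section~2.2 demonstrates for the Cremona involution. Thus the strengthening of Theorem~\ref{TheoremSubmeasurePushforwardMeromorphic}(5) from the canonical family $\mathcal{G}(\mu)$ to an arbitrary representing family $\mathcal{G}$ is available exactly when $f$ is continuous everywhere.
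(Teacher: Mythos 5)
Your proof is correct. The paper itself omits the proof of this lemma (stating only that it is ``similar to, and simpler than'' the proof of Theorem \ref{TheoremSubmeasurePushforwardMeromorphic}), and your argument is exactly the natural simplification being alluded to: since $f$ is continuous everywhere, $f^{*}(\varphi)=\varphi\circ f$ is continuous, so by positivity of $\mu$ the infimum in \eqref{EquationSubmeasurePushforwardMeromorphic} is attained at $\psi=f^{*}(\varphi)$ (this is part 2 of Theorem \ref{TheoremSubmeasureUpperSemicontinuous}), after which the identity $f_{*}(\mu)(\varphi)=\mu(f^{*}(\varphi))=\sup_{\nu\in\mathcal{G}}\nu(f^{*}(\varphi))$ is immediate for an \emph{arbitrary} representing family $\mathcal{G}$. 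Your closing remark correctly identifies why this collapse is unavailable for open-dense defined maps and why the statement there must be restricted to the canonical family $\mathcal{G}(\mu)$.
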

 
Now we define an appropriate notion of entropy for a positive strong submeasure $\mu$ which is invariant by $f$. A first try would be to naively  adapt (\ref{EquationMeasureEntropy}) to the more general case of positive strong submeasures, and then proceed as before. However, this is not appropriate, as the readers can readily check with the simplest case of identity maps on spaces with infinitely many points. In this case, there are many positive strong submeasures with mass $1$ and invariant by $f$, whose entropy, according to the above definition, can be as large as desired and even can be infinity. On the other hand, recall that the topological entropy of the identity map is $0$. (A refinement of this naive version turns out to be appropriate, see the remarks at the end of this section.)  

We instead proceed as follows. Given $\mu$ a positive strong submeasure invariant by $f$ and any regular measure $\nu \leq \mu$, there is a regular measure $\nu '$ so that $\nu '$ is invariant by $f$, $\nu '\leq \mu$ and $\nu '$ has the same mass as $\nu$. Such a measure $\nu '$ can be constructed as any cluster point of the sequence (Cesaro's average) 
\begin{eqnarray*}
\frac{1}{n}\sum _{i=0}^{n-1}f^i_*(\nu ). 
\end{eqnarray*}
We define $\mathcal{G}(f,\mu )=\{\nu :$ $\nu $ is a regular probability Borel-measure invariant by $f$, and $\nu\leq \mu\}$. Finally, we define the desired entropy as follows:
\begin{equation}
h_{\mu}(f):=\sup _{\nu \in \mathcal{G}(f,\mu )}h_{\nu}(f).
\label{EquationEntropySubmeasure}\end{equation}

We now can prove the Variational Principle for positive strong submeasures. We first show that if $\mu$ is any positive strong submeasure of mass $1$ and invariant by $f$, then $h_{\mu }(f)\leq h_{top}(f)$. To this end, we need only to observe that for any $\nu \in \mathcal{G}(f,\mu )$, then the mass of $\nu $ is $\leq 1$, and hence $h_{\nu }(f)\leq h_{top}(f)$ by Theorem \ref{TheoremVariationalPrincipleMeasure}. 

We finish the proof by showing that there is a positive strong submeasure $\mu$ of mass $1$ and invariant by $f$ so that $h_{\mu }(f)=h_{top}(f)$. To this end, we let $\mathcal{G}=\{\nu :$ $\nu$ is a probability regular measure and is invariant by $f\}$. This set is non-empty, by the construction using Cesaro's averages.  We define $\mu =\sup _{\nu \in \mathcal{G}}\nu$. By Lemma \ref{LemmaPushforwardStrongSubmeasure}, we have that 
\begin{eqnarray*}
f_*(\mu )=\sup _{\nu \in \mathcal{G}}f_*(\nu )=\sup _{\nu}\nu =\mu.
\end{eqnarray*}
Hence, $\mu$ is invariant by $f$. Moreover, from the definition of $\mathcal{G}$, it follows that $\mu$ has mass $1$. Finally, by (\ref{EquationEntropySubmeasure}) and Theorem \ref{TheoremVariationalPrincipleMeasure} we have that $h_{\mu }(f)=h_{top}(f)$. 
 
{\bf Remarks.} Besides fitting naturally with the Variational Principle, definition (\ref{EquationEntropySubmeasure}) is also compatible with the philosophy that a property of a positive strong submeasure $\mu$ should be related to the supremum of the same property of measures $\nu \leq \mu$. We have seen some instances of this philosophy above: the definition of $\mu$ itself is such a supremum, and Lemma \ref{LemmaPushforwardStrongSubmeasure}. Since entropy is related to invariant measures of $f$, in definition (\ref{EquationEntropySubmeasure}) we have restricted to only $\mathcal{G}(f,\mu )$.

While we always have that for a positive strong submeasure $\mu$ invariant by a continuous map $f$ then $\mu \geq \sup _{\nu \in \mathcal{G}(f,\mu)}\nu$, it is not always true that $\mu =\sup _{\nu \in \mathcal{G}(f,\mu )}\nu$. In fact, assume that $f$ is not the identity map and $X$ has at least $2$ elements. Let $\mu =\sup _{x\in X}\delta _x$. Then it is easy to check that $\mu$ has mass $1$ and is $f$-invariant. But it is not true that $\mu =\sup _{\nu \in \mathcal{G}(f,\mu )}\nu$. In fact, assume otherwise that $\mu =\sup _{\nu \in \mathcal{G}(f,\mu )}\nu$. Let $x_0\in X$ be such that $f(x_0)\not= x_0$.  Then there is a sequence $\nu _n\in \mathcal{G}(f,\mu )$ so that $\nu _n(x_0)\rightarrow 1$. Hence, since $\nu _n$ all has mass $\leq 1$, it follows that $\nu _n$ converges to $\delta _{x_0}$. Since $f_*(\nu _n)=\nu _n$ for all $n$, it follows also that $f_*(\delta _{x_0})=\delta _0$. This, in turn, means that $f(x_0)=x_0$, which is  a contradiction.  

While the naive version discussed in the paragraph after Lemma \ref{LemmaPushforwardStrongSubmeasure} of the entropy $h_{\mu}(f,\alpha )$ is not a good one, here we present a refinement which is compatible to  (\ref{EquationEntropySubmeasure}) and hence the Variational Principle. Define $\mathcal{P}$ the set of all finite collections $\alpha$ of Borel sets of $X$. For $\nu \in \mathcal{G}(f,\mu )$, if $\alpha$ is a $\nu$-partition, then we define $\widetilde{h}_{\nu }(f,\alpha )=h_{\nu}(f,\alpha )$ as in (\ref{EquationMeasureEntropy}), otherwise we define $\widetilde{h}_{\nu }(f,\alpha )=0$. By (\ref{EquationEntropySubmeasure}), we have
\begin{eqnarray*}
h_{\mu}(f)&=&\sup _{\nu \in \mathcal{G}(f,\mu )}h_{\nu }(f)\\
&=&\sup _{\nu \in \mathcal{G}(f,\mu )}\sup _{\alpha \in \mathcal{P}}\widetilde{h}_{\nu }(f,\alpha )\\
&=&\sup _{\alpha \in \mathcal{P}}[\sup _{\nu \in \mathcal{G}(f,\mu )}\widetilde{h}_{\nu }(f,\alpha )].
\end{eqnarray*}
This suggests us, in parallel with (\ref{EquationMeasureEntropy}), to define for any finite collection of Borel sets $\alpha$, the quantity 
\begin{eqnarray*}
h_{\mu}(f,\alpha )=\sup _{\nu \in \mathcal{G}(f,\mu )}\widetilde{h}_{\nu }(f,\alpha ).
\end{eqnarray*}
Then, we have, as in the classical case: $h_{\mu}(f)=\sup _{\alpha \in \mathcal{P}}h_{\mu}(f,\alpha )$. 

\subsection{The case of continuous open-dense defined selfmaps which are good with respect to iterates} We now define topological entropy and measure theoretic entropy for a continuous open-dense defined selfmap $f:X\dashrightarrow X$, where $X$ is a compact metric space. The topological entropy of $f$ is given by:
\begin{eqnarray*}
h_{top}(f):= h_{top}(\phi _f).
\end{eqnarray*} 

If $\mu$ is a probability measure on $X$ having no mass on $I(f)$ and is invariant by $f$, then we can define exactly as in the continuous dynamics case a notion of measure entropy $h_{\mu}(f)$, and it is again true that $h_{\mu}(f)\leq h_{top}(f)$. However, if $\mu$ has mass on $I(f)$, then in general it is not known how to define measure entropy $h_{\mu}(f)$, since in the definition of measure entropy we need to use the fact that the preimages by $f^j$ of any Borel $\mu$-partition of $X$ are again Borel $\mu$-partitions of $X$ for all $j=0,1,2,\ldots$. The latter is no longer guaranteed if $I(f)\not=\emptyset$  and $\mu$ has mass on $I(f)$. Also, theoretically the Variational Principle may not hold if we restrict to only measures having no mass on $I(f)$. Likewise, other properties of continuous dynamics, which we mention above, do not hold in the meromorphic setting. For example, there is no guarantee that the Cesaro's average approach can provide us with invariant measures.  See Section 5 for a discussion in the related case of transcendental maps of $\mathbb{C}$. 

However, still some analogs of the above classical results in continuous dynamics hold, if we allow positive strong submeasures in the consideration. First, apply the above results, we now can give the proof of Proposition \ref{PropositionMotivationMeasureEntropy}. 
\begin{proof}[Proof of Proposition \ref{PropositionMotivationMeasureEntropy}] Let $\hat{\mu}\in SM^+(\Gamma _{f,\infty})$ such that $(\pi _1)_*(\hat{\mu})\leq {\mu}$ and $||\mu ||=||\hat{\mu}||$. Then it follows that $(\pi _1)_*(\hat{\mu})= {\mu}$.  Since $\mu $ has no mass on $I_{\infty}(f)$, it follows that $\hat{\mu}$ has no mass on $\pi _1^{-1}(I_{\infty}(f))$. Therefore, the mass of $\hat{\mu}$ is concentrated on the set $\{(x,f(x),f^2(x),\ldots )$ $:~x\in X\backslash I_{\infty}(f)\}$. Since $\pi _1$ is a homeomorphism on $X\backslash I_{\infty}(f)$, it follows that $\hat{\mu}$ is $\phi _f$-invariant and 
\begin{eqnarray*}
h_{\mu}(f)=h_{\hat{\mu}}(\phi _f). 
\end{eqnarray*} 
\end{proof}

Using Theorem \ref{TheoremInvariantMeasures} and the previous subsection, we give the following definition. 
\begin{definition}
Let $\mu$ be a positive strong submeasure of mass $1$ invariant by $f$. We define 
\begin{eqnarray*}
h_{\mu}(f):=\sup _{\nu \in M^+(\Gamma _{f,\infty}), ~(\phi _f)_*(\nu )=\nu ,~ ||\nu ||=1 ,~(\pi _1)_*(\nu )\leq \mu}h_{\nu }(\phi _f).
\end{eqnarray*}
\label{DefinitionMeasureEntropyMeromorphicMap}\end{definition}

By Theorem \ref{TheoremInvariantMeasures} and the previous subsection, this measure entropy has good properties as wanted (such as: monotone in $\mu$, bounded from above by dynamical degrees of $f$, and satisfies the Variational Principle). In particular, the following invariant positive strong submeasure captures the dynamics of $f$: We let $\hat{\mu}_{\phi _f,inv}$ to be the supremum of all $\phi _f$-invariant probability measures. Then $\hat{\mu}_{\phi _f,inv}$ is an $\phi _f$-invariant positive strong submeasure and $h_{top}(f)=h_{top}(\phi _f)=h_{\hat{\mu}_{\phi _f,inv}}(\phi _f)$. Now $\mu _{f,inv}=Inv(\geq (\pi _1)_*(\hat{\mu}_{\phi _f,inv}))$  is an $f$-invariant positive strong submeasure of mass $1$ and $h_{\mu _{f,inv}}(f)=h_{top}(f)$. By the remarks at the end of the previous subsection, we have that $\mu _{f,inv}<\sup _{x\in X}\delta _x$ in general, for example when $f$ is a holomorphic map which is not the identity map.

\section{Applications} In this section we provide some applications of the previous sections. Firstly, we will consider a general dominant meromorphic selfmap. Then, we consider in more detail dominant meromorphic selfmaps in dimension $2$. Finally, we consider the case of transcendental maps of $\mathbb{C}$ and $\mathbb{C}^2$.  

\subsection{Dominant meromorphic maps} Let $X$ be a compact complex variety and $f:X\dashrightarrow X$ a dominant meromorphic map. Then from the topological viewpoint, $f$ is an open-dense defined selfmap which is good with respect to iterates. Therefore, we can define pushforward of a positive strong submeasure by $f$, as well as the notions of topological and measure theoretic entropy. By the Variational Principle for the continuous map $\phi _f$ and the definition of $h_{\mu}(f)$, we have that $h_{\mu}(f)\leq h_{top}(f)$, and the latter is bounded from above by dynamical degrees of $f$ by \cite{dinh-sibony3}. Also, by the properties proven above, it is not hard to see that $\max _{\mu}h_{\mu}(f)=h_{top}(f)$, where $\mu$ runs all over $f$-invariant positive strong submeasures of mass $1$. 

We can also define pullback of positive strong submeasures by $f$, following the description in Section 2.3. We end this section describing in detail the pushforward by meromorphic maps on positive strong submeasures. The next result about a good choice of $\psi \in C^0(X,\geq \varphi )$ for some special bounded upper-semicontinuous functions will be needed for that purpose.
\begin{lemma} Let $X$ be a compact metric space, $A\subset X$ a closed set and $U=X\backslash A$. Let $\varphi$ be a bounded upper-semicontinuous function on $X$ so that $\chi =\varphi |_U$ is continuous on $U$ and $\gamma =\varphi |_A$ is continuous on $A$. For any $U'\subset\subset U$ an open set and $\epsilon >0$,  there is a function $\psi \in C^0(X,\geq \varphi )$ so that: 

i) $\psi |_{U'}=\chi $; ii) $\sup _A|\psi |_A-\gamma |\leq \epsilon$; and iii) $\sup _X|\psi |\leq \sup _X|\varphi |+\epsilon$. 
\label{LemmaGoodContinuousFunctionChoice}\end{lemma}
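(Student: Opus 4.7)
The plan is to obtain $\psi$ by gluing $\chi$ on a neighborhood of $\overline{U'}$ to a continuous function lying slightly above $\gamma$ on a neighborhood of $A$, using a partition of unity tailored to these two regions. Upper-semicontinuity of $\varphi$ will supply a neighborhood of $A$ on which such a continuous bump dominates $\varphi$, and convexity of the glueing will preserve $\psi\geq\varphi$ through the transition.

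First, I would extend $\gamma$ by Tietze's theorem to a continuous function $\tilde{\gamma}:X\to\R$ with $\sup_X|\tilde{\gamma}|\leq\sup_A|\gamma|\leq\sup_X|\varphi|$. Since $\varphi$ is upper-semicontinuous, $\tilde{\gamma}$ continuous, and $\varphi|_A=\gamma=\tilde{\gamma}|_A$, the set $V_0:=\{x\in X:\varphi(x)-\tilde{\gamma}(x)<\epsilon/2\}$ is open (as $\varphi-\tilde{\gamma}$ is upper-semicontinuous) and contains $A$. Because $U'\subset\subset U$ forces $\overline{U'}\cap A=\emptyset$, normality of the compact metric space $X$ lets me shrink $V_0$ to an open set $V$ with $A\subset V\subset V_0$ and $\overline{V}\cap\overline{U'}=\emptyset$, and then pick a further open $V'$ with $A\subset V'\subset\overline{V'}\subset V$.

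Next, I would fix a continuous partition of unity $\tau_V+\tau_W\equiv 1$ subordinate to the open cover $\{V,\,W:=X\setminus\overline{V'}\}$, produced by standard Urysohn techniques (for instance by $\tau_V(x)=d(x,X\setminus V)/[d(x,X\setminus V)+d(x,\overline{V'})]$); by construction $\tau_V\equiv 1$ on $\overline{V'}\supset A$ and $\tau_W\equiv 1$ on $X\setminus V\supset\overline{U'}$. The candidate is then
\begin{equation*}
\psi(x):=\tau_V(x)\bigl(\tilde{\gamma}(x)+\epsilon/2\bigr)+\tau_W(x)\chi(x),
\end{equation*}
where $\tau_W\chi$ is extended by $0$ to $\overline{V'}$; this extension is continuous on $X$ because $\chi$ is bounded on $U\supset W$ while $\tau_W$ vanishes on $\overline{V'}$.

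Properties (i), (ii), (iii) then read off from the choices: on $\overline{U'}$ we have $\tau_V=0$ and $\tau_W=1$, so $\psi=\chi$; on $A$ we have $\tau_V=1$ and $\tilde{\gamma}=\gamma$, so $\psi-\gamma\equiv\epsilon/2$; and $|\psi|\leq\tau_V(\sup_X|\varphi|+\epsilon/2)+\tau_W\sup_X|\varphi|\leq\sup_X|\varphi|+\epsilon/2$. The main obstacle, and the step for which the choices above are designed, is the global inequality $\psi\geq\varphi$, in particular on the transition annulus $V\cap W$: there $V\cap W\subset U$ gives $\chi=\varphi$, and $V\subset V_0$ gives $\tilde{\gamma}+\epsilon/2\geq\varphi$, so $\psi=\tau_V(\tilde{\gamma}+\epsilon/2)+\tau_W\varphi\geq\varphi$. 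Outside $V$ one has $\psi=\chi=\varphi$, and on $\overline{V'}$ one has $\psi=\tilde{\gamma}+\epsilon/2\geq\varphi$ by choice of $V_0$, so the inequality holds everywhere and the construction is complete.
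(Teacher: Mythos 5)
Your proof is correct, and every step checks out: $V_0=\{\varphi-\tilde{\gamma}<\epsilon/2\}$ is open because $\varphi-\tilde{\gamma}$ is upper-semicontinuous, it contains $A$ because $\tilde{\gamma}|_A=\gamma=\varphi|_A$, the two-function partition of unity behaves as you say, the extension of $\tau_W\chi$ by zero across $\overline{V'}$ is continuous since $\chi$ is bounded and $\tau_W$ vanishes there, and the case analysis for $\psi\geq\varphi$ (outside $V$, on $\overline{V'}$, and on the transition region $V\cap W\subset U$) is complete. The route differs from the paper's in its implementation. The paper works locally: it covers $A$ by finitely many small balls $\overline{B(x_i,r_i)}$ on which upper-semicontinuity bounds $\chi$ by $\gamma(x_i)+\epsilon_1$ and on which $\gamma$ has small oscillation, extends $\gamma$ separately on each ball to a function $\gamma_{x_i}$, and glues the $m+1$ pieces (the balls plus an intermediate open set $U''\subset\subset U$) with a partition of unity, adding a uniform constant $4\epsilon_1$ to dominate $\varphi$. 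You instead perform a single global Tietze extension $\tilde{\gamma}$ of $\gamma$ and let upper-semicontinuity of $\varphi-\tilde{\gamma}$ hand you, in one stroke, the open set $V_0\supset A$ on which $\tilde{\gamma}+\epsilon/2$ dominates $\varphi$; the gluing then needs only a two-set partition of unity. Your version is shorter and avoids the oscillation bookkeeping on each ball; the paper's version is more hands-on and produces the dominating function by purely local data, which is in the same spirit as the explicit collection $\mathcal{G}$ it goes on to build in the proof of Theorem \ref{TheoremPushforwardBlowup}, but as a proof of the lemma itself the two are interchangeable.
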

\begin{proof} Let $\epsilon _1>0$ be a small number to be determined later. Since $\varphi $ is upper-semicontinuous, for each $x\in A$, there is $r_x>0$, which we choose so small that $\overline{U'}\cap \overline{B(x,r_x)}=\emptyset$, so that 
\begin{eqnarray*}
\sup _{y\in U\cap \overline{B(x,r_x)}}\chi (y)\leq \gamma (x)+\epsilon _1.
\end{eqnarray*} 
Since $\gamma$ is continuous on $A$, by shrinking $r_x$ if necessary, we can assume that 
\begin{eqnarray*}
\sup _{x'\in A\cap \overline{B(x,r_x)}}|\gamma (x')-\gamma (x)|\leq \epsilon _1. 
\end{eqnarray*} 
Hence we obtain 
\begin{eqnarray*}
\sup _{y\in U\cap \overline{B(x,r_x)}}\chi (y)\leq \inf _{x'\in A\cap \overline{B(x,r_x)}}\gamma (x')+2\epsilon _1. 
\end{eqnarray*}
The function $\gamma |_{A\cap \overline{B(x,r_x)}}$ can be extended to a continuous function $\gamma _x$ on $\overline{B(x,r_x)}$. We can assume, by shrinking $r_x$ for example, that 
\begin{eqnarray*}
\sup _{x',x"\in \overline{B(x,r_x)}}|\gamma _x(x')-\gamma _x(x")|\leq \epsilon _1. 
\end{eqnarray*} 
Now, since $A$ is compact, we can find a finite number of such balls, say $B(x_1,r_1),\ldots ,B(x_m,r_m)$, which cover $A$. We choose $U"$ another open subset of $X$ so that $U'\subset\subset  U"\subset\subset U$ and so that $U",B(x_1,r_1),\ldots ,B(x_m,r_m)$ is a finite open covering of $X$. Let $\tau ,\tau _1,\ldots ,\tau _m$ be a partition of unity subordinate to this open covering. Then the function $$\psi (x)=\tau (x)\gamma (x)+\sum _{i=1}^m\tau _i(x)[\gamma _{x_i}(x)+4\epsilon _1],$$
with $4\epsilon _1<\epsilon$, satisfies the conclusion of the lemma.   
\end{proof}

\begin{proof}[Proof of Theorem \ref{TheoremPushforwardBlowup}]
Let $B\in Z$ be the exceptional divisor of the blowup. Then $\pi :B\rightarrow A$ is a smooth holomorphic fibration, whose fibres are isomorphic to $\mathbb{P}^{r-1}$ where $r=$ the codimension of $A$. As in Example 1, it can be computed that for $x\in A$ then $\pi _*(\varphi )(x)=\sup _{y\in \pi ^{-1}(x)}\varphi $. Therefore, from what was said about the map $\pi :B\rightarrow A$, it follows that $\pi _*(\varphi )|_A$ is continuous. Then it is easy to see that the upper-semicontinuous function $\pi _*(\varphi )$ satisfies the conditions of Lemma \ref{LemmaGoodContinuousFunctionChoice}. It is easy to check that $\pi ^*(\mu _1)$ is a positive measure on $Z$.  Hence, by the conclusion of Lemma \ref{LemmaGoodContinuousFunctionChoice}, it is easy to see that 
\begin{equation}
\pi ^*(\mu )(\varphi )=\pi ^*(\mu _1)(\varphi )+\mu _2(\pi _*(\varphi )|_A).
\label{EquationDetail}\end{equation}

Now we provide  an explicit choice of the collection $\mathcal{G}$ associated to $\pi ^*(\mu )$ in part 2) of Theorem \ref{TheoremHahnBanach}. From Equation (\ref{EquationDetail}), it suffices to provide such a $\mathcal{G}$ for  $\mu _2$, since then the corresponding collection for $\mu$ will be $\pi ^*{\mu _1}+\mathcal{G}$. Therefore, in the remaining of the proof, we will assume that $\mu =\mu _2$ has support on $A$. 

Define $\psi (\varphi )=\pi _*(\varphi )|_A$, it then follows that  $\psi (\varphi )\in C^0(A)$, and 
\begin{eqnarray*}
\pi ^*(\mu )(\varphi )=\mu (\psi (\varphi )). 
\end{eqnarray*}
We now present an explicit collection $\mathcal{G}$ of positive measures on $X$ so that 
\begin{eqnarray*}
\mu (\psi (\varphi ))=\sup _{\chi \in \mathcal{G}}\chi (\varphi ). 
\end{eqnarray*}

To this end, let us consider for each finite open cover $\{U_i\}_{i\in I}$ of $A$, a partition of unity $\{\tau _i\}$ subordinate to the finite open cover $\{U_i\}$ of $A$, and local continuous sections $\gamma _i:U_i\rightarrow \pi ^{-1}(U_i)$, the following assignment on $B$: 
\begin{eqnarray*}
\chi (\{U_i\}, \{\tau _i\}, \gamma _i)(\varphi )=\mu (H(\varphi )), 
\end{eqnarray*}
 where $H(\varphi )\in C^0(A)$ is the following function 
 \begin{eqnarray*}
 H(\varphi )(x)=\sum _{i\in I}\tau _i(x)\varphi (\gamma _i(x)). 
 \end{eqnarray*}
 Since $H(\varphi )$ is linear and non-decreasing in $\varphi$, it is easy to see that $\chi $ is indeed a measure. Moreover, since $|H(x)|\leq \max _B|\varphi |$, it follows that $||\chi ||\leq ||\mu ||$.

We let $\mathcal{G}$ be the collection of such positive measures. We now claim that for all $\varphi \in C^0(Z)$
\begin{eqnarray*}
\mu (\psi (\varphi ))=\sup _{\chi \in \mathcal{G}}\chi (\varphi ). 
\end{eqnarray*}

We show first $\mu (\psi (\varphi ))\geq \chi (\varphi )$ for all $\chi \in \mathcal{G}$.  In fact, since $\gamma _i(x)\in \pi ^{-1}(x)$ for all $x\in A$, it follows by definition that 
\begin{eqnarray*}
H(\varphi )(x)\leq \sup _{\pi ^{-1}(x)}\varphi =\psi (\varphi )(x),
\end{eqnarray*}
for all $x\in A$. Hence $\mu (\psi (\varphi ))\geq \chi (\varphi )$. 
 
Now we show the converse. Let $\varphi $ be any continuous function on $Z$. Then for any $\epsilon >0$ we can always find a finite open covering $\{U_i\}_{i\in I}$ of $X$, depending on $\varphi$ and $\epsilon$, so that for all $x\in U_i$ we have
\begin{eqnarray*}
|\varphi (\gamma _i(x))-\sup _{\pi ^{-1}(x)}\varphi |\leq \epsilon. 
\end{eqnarray*} 
 It then follows that correspondingly $|H(x)-\psi (\varphi )(x)|\leq \epsilon$ for all $x\in A$. Therefore, for this choice of $\chi \in \mathcal{G}$
 \begin{eqnarray*}
 |\mu (\psi (\varphi ))-\mu (H)|\leq \epsilon ,
 \end{eqnarray*}
 and hence letting $\epsilon \rightarrow 0$  concludes the proof
\end{proof}

\subsection{Meromorphic maps in dimension $2$} In this subsection we mention an application to dynamics of meromorphic maps. Let $f:X\dashrightarrow X$ be a dominant meromorphic map of a compact K\"ahler {\bf surface}. The study of dynamics of such maps is very active. It is now recognised that maps which are algebraic stable (those whose pullback on cohomology group is compatible with  iterates, that is $(f^n)^*=(f^*)^n$ on $H^{1,1}(X)$ for all $n\geq 0$) have good dynamical properties. An important indication of the complexity of such  maps is dynamical degrees defined as follows. Let $\lambda _1(f)$ be the spectral radius of the linear map $f^*:H^{1,1}(X)\rightarrow H^{1,1}(X)$ and let $\lambda _2(f)$ be the spectral radius of the linear map $f^*:H^{2,2}(X)\rightarrow H^{2,2}(X)$. There are two large interesting classes of such maps: those with large topological degree ($\lambda _2(f)>\lambda _1(f)$) and those with large first dynamical degree ($\lambda _1(f)>\lambda _2(f)$). The dynamics of the first class is shown in our paper \cite{dinh-nguyen-truong2} to be as nice as expected. For the second class, the most general result so far belongs to \cite{diller-dujardin-guedj1, diller-dujardin-guedj2, diller-dujardin-guedj3}, who showed the existence of canonical Green $(1,1)$ currents $T^+$ and $T^-$ for $f$, and who used potential theory to prove that the dynamics is nice (in particular, the wedge intersection $T^+\wedge T^-$ is well-defined as a positive measure) if the so-called finite energy conditions on the Green currents are satisfied. While these conditions are satisfied for many interesting subclasses, it is known however that in general they are false \cite{buff}. On the other hand, since it is known that $T^+$ has no mass on curves \cite{diller-dujardin-guedj1}, it follows that the least negative intersection $\Lambda (T^+,T^-)$ defined in \cite{truong4} is a positive strong submeasure, see the proof therein. In summary, we obtain the following result.  
\begin{theorem}
Let $f:X\dashrightarrow X$ be a dominant meromorphic map of a compact K\"ahler surface which is algebraic stable and has $\lambda _1(f)>\lambda _2(f)$. Let $T^{+}$ and $T^{-}$ be the canonical Green $(1,1)$ currents of $f$. Then the least negative intersection $\Lambda (T^+,T^-)$ is in $SM^+(X)$. 
\label{TheoremDynamicsDimension2}\end{theorem}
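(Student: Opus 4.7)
The plan is to verify the hypotheses of part 2 of Theorem \ref{TheoremHahnBanach}, presenting $\Lambda(T^+,T^-)$ as the pointwise supremum of a family of positive measures whose masses are uniformly bounded. Concretely, I would recall the construction from \cite{truong4}: one takes smooth approximations $T^+_n \searrow T^+$ (for instance by convolving local potentials in suitable charts, or via the regularization of Demailly), and since $T^+$ has no mass on curves by \cite{diller-dujardin-guedj1}, each classical wedge product $T^+_n \wedge T^-$ is well-defined as a positive Borel measure on $X$. Cohomologically these measures represent the fixed class $\{T^+\} \cdot \{T^-\}$, so their total masses are uniformly bounded by a constant $C$ depending only on the Green classes and the fixed K\"ahler form.

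Next I would recall that $\Lambda(T^+,T^-)$, by definition in \cite{truong4}, is the operator on $C^0(X)$ obtained as the supremum, over all admissible regularization schemes, of the measures $T^+_n \wedge T^-$ evaluated on the test function. With this in hand, positivity of $\Lambda(T^+,T^-)$ is immediate (each $T^+_n \wedge T^-$ is a positive measure, hence monotone on $C^0(X)$, and the supremum of monotone functionals is monotone), and boundedness follows from $|\Lambda(T^+,T^-)(\varphi)| \leq C \|\varphi\|_{L^\infty}$. Sub-linearity is the general fact that a pointwise supremum of linear functionals is sub-linear: for $\varphi_1, \varphi_2 \in C^0(X)$,
\begin{eqnarray*}
\Lambda(T^+,T^-)(\varphi_1 + \varphi_2) &=& \sup_n (T^+_n \wedge T^-)(\varphi_1 + \varphi_2) \\
&\leq& \sup_n (T^+_n \wedge T^-)(\varphi_1) + \sup_n (T^+_n \wedge T^-)(\varphi_2),
\end{eqnarray*}
and positive homogeneity is clear. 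Applying part 2 of Theorem \ref{TheoremHahnBanach} with the family $\mathcal{G} = \{T^+_n \wedge T^- : n \geq 1, \text{ admissible regularizations}\}$ then concludes that $\Lambda(T^+,T^-) \in SM^+(X)$.

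The main obstacle, and the reason the theorem is non-trivial, is that when the finite energy hypotheses of \cite{diller-dujardin-guedj1, diller-dujardin-guedj2, diller-dujardin-guedj3} fail (as can genuinely occur, by \cite{buff}), the approximating measures $T^+_n \wedge T^-$ need not converge to a single positive measure on $X$: mass can concentrate along the polar locus of $T^-$ in ways that depend on the regularization. Classical local intersection theory throws this excess mass away and produces a positive measure with strictly smaller total mass than the cohomological prediction. The role of the "least negative" construction in \cite{truong4} is precisely to record this excess mass as a non-linear correction, yielding an object which is no longer a measure but is still a supremum of measures of uniformly bounded mass; the condition that $T^+$ has no mass on curves is what ensures each individual $T^+_n \wedge T^-$ is a genuine positive measure with controlled total mass, so that the supremum produces a well-defined positive strong submeasure rather than an unbounded operator.
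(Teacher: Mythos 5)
The first thing to note is that the paper does not actually prove this theorem within this document: the entire argument given is the single sentence that, since $T^+$ has no mass on curves by \cite{diller-dujardin-guedj1}, the least negative intersection $\Lambda(T^+,T^-)$ \emph{defined in} \cite{truong4} is a positive strong submeasure, ``see the proof therein.'' So there is no detailed proof here to compare against, and your write-up is in that sense already more explicit than the paper's. Your overall strategy --- exhibit $\Lambda(T^+,T^-)$ as a supremum of positive measures of uniformly (cohomologically) bounded mass and invoke part 2) of Theorem \ref{TheoremHahnBanach} --- is exactly the philosophy this paper uses everywhere for positive strong submeasures, and the uniform mass bound via the fixed cohomology class $\{T^+\}\cdot\{T^-\}$ is surely part of the intended argument.

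That said, there are two genuine gaps. First, you assert as the definition of $\Lambda(T^+,T^-)$ that it is the supremum over admissible regularization schemes of the measures $T^+_n\wedge T^-$; this is a guess about \cite{truong4} that cannot be checked from the present paper, and the name ``least negative intersection'' suggests the actual construction is organized around controlling the \emph{negative} part of cluster points of regularized intersections (so that the content of the theorem is precisely that this negative part vanishes here), which is structurally different from a supremum of positive measures. Second, and more seriously, the step you call ``immediate'' --- that each $T^+_n\wedge T^-$ is a genuine positive measure --- is where all the work is. Global smooth regularizations $T^+_n$ of a positive closed $(1,1)$-current in a fixed cohomology class are in general only quasi-positive (Demailly: $T^+_n\geq -\lambda_n\omega$ with loss controlled by Lelong numbers), so $T^+_n\wedge T^-$ need not be positive, and if one instead uses decreasing local potentials the issue becomes whether the limit charges the polar set of $T^+$ independently of the approximation. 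The hypothesis that $T^+$ has no mass on curves is the one fact the paper isolates as the input, and your proposal mentions it only in passing at the end without explaining the mechanism by which it forces positivity and regularization-independence of the limiting object. As it stands the proposal is a plausible outline consistent with Theorem \ref{TheoremHahnBanach}, but the decisive positivity step is asserted rather than proved.
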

At the moment we do not know whether $f_*(\Lambda (T^+,T^-))=\Lambda (T^+,T^-)$. (In fact, from Buff's examples mentioned above, we do not expect this to be true in general.) However, by the discussion in the previous sections, we can consider cluster points of Cesaro's average $\frac{1}{n}\sum _{j=1}^n(f_*)^j(\Lambda (T^+,T^-))$, and then obtain the associated invariant positive strong submeasures $\mu$. From such a $\mu$, we then can construct associated invariant measures of $\phi _f$. The significance of these invariant positive strong submeasures and measures, together with their entropies, will be pursued in a future work. (See also next subsection where we work out explicitly the cluster points of Cesaro's average, where we work with a transcendental holomorphic map of $\mathbb{C}^1$ and start with a probability measure $\mu $ on $\mathbb{P}^1$. In this case the only dynamically interesting invariant positive strong submeasure we can obtain is $\sup _{x\in \mathbb{P}^1}\delta _x$.) 

{\bf Remark.} We note a parallel between the least negative intersection and the tangent currents in this situation. Under the same assumptions as in Theorem \ref{TheoremDynamicsDimension2}, it was shown in our joint paper \cite{dinh-nguyen-truong} that the h-dimension (defined in \cite{dinh-sibony2}) between $T^+$ and $T^-$ is $0$, the best possible. 

\subsection{Transcendental maps}

While ergodic properties (entropy, invariant measures) of meromorphic maps on $\mathbb{C}$ and $\mathbb{C}^2$ are intensively studied, it seems that these properties are very rarely discussed or explicitly computed for transcendental maps (such as $f(x)=ae^x+b$) in the literature. (Other properties, though, such as Fatou and Julia sets of transcendental holomorphic maps on $\mathbb{C}$, are extensively studied in the literature by many researchers, starting from the work Fatou \cite{fatou}.) In this section, we show that the same ideas used in the previous sections can be applied to this. 

\subsubsection{The case of transcendental maps on $\mathbb{C}$}

Consider the case of a dominant holomorphic map $f:\mathbb{C}\rightarrow \mathbb{C}$. We let $X=\mathbb{P}^1$. While $f$ cannot be extended to a meromorphic map on $X$ if $f$ is transcendental, the set $U=\mathbb{C}$ where $f$ is defined is a dense open set of $X$, and its image $f(U)$ contains an open dense subset of $X$. Therefore, we can use the results in Sections 2--4 to define the pushforward of positive strong submeasures on $X$. (Remark: In contrast, it is in general unknown how to pullback positive strong submeasures on $X$, since $f$ does not have finite fibres.)

 We can then define $\Gamma _{f,\infty}\subset X^{\mathbb{N}}$ and an associated continuous map $\phi _f:\Gamma _{f,\infty}\rightarrow \Gamma _{f,\infty}$ as before, and define the topological entropy by the formula
 \begin{eqnarray*}
 h_{top}(f):=h_{top}(\phi _f).
 \end{eqnarray*} 
In \cite{benini-fornaess-peters}, the authors used a different definition of topological entropy $h_{top}^*(f)$, defined using only compact subsets of $\mathbb{C}$, and showed that $h_{top}^*(f)=\infty$ for every transcendental map. Since $(\mathbb{C},f)$ is embedded as a subsystem of $(\Gamma _{f,\infty},\phi _f)$, it can be checked that $h_{top}^*(f)\leq h_{top}(\phi _f)$ for the map $\phi _f$. Therefore, we also have $ h_{top}(f)=\infty$. 

Now we discuss in more detail the pushforward of a positive measure on $X$ by a transcendental map $f:X\dashrightarrow X$. Let $x_0$ be the point at infinity of $X=\mathbb{P}^1$. Let $\mu$ be a probability measure on $X$. Then we can write $\mu =\mu _0+a \delta _{x_0}$, where $0\leq a\leq 1$. Then $f_*(\mu _0)$ is well-defined as a measure. Since $f$ is transcendental, it follows by Picard's theorem $f(V\cap \mathbb{C})$ is dense in $X$ for all open neighbourhood $V$ of $x_0$. From this we obtain that $f_*(\delta _{x_0})=\mu _X:=\sup _{x\in X}\delta _x$. We obtain the formula: 
\begin{eqnarray*}
f_*(\mu )=f_*(\mu _0)+a \mu _X. 
\end{eqnarray*}  
We note that $f_*(\mu _X)=\mu _X$. 

It follows that if $\mu _{\infty}$ is a cluster point of the Cesaro's averages $\frac{1}{n}\sum _{j=0}^n(f_*)^j(\mu)$, where $\mu$ is a probability measure, then we can write $\mu _{\infty}=\widetilde{\mu _{\infty}}+b\delta _{x_0} +a \mu _X$.  Here $\widetilde{\mu _{\infty}}$ has no mass on $x_0$, $a,b\geq 0$, $a+b\leq 1$. It can be checked easily that $\widetilde{\mu _{\infty}}$ is $f$-invariant. Hence $f_*(\mu _{\infty})=\widetilde{\mu _{\infty}}+(a+b)\mu _X$ is $f$-invariant.  

By Picard's theorem for essential singularities of one complex variable, if $\nu$ is a measure with no mass on $x_0$ and is $f$-invariant, then the support of $\nu$ has at most $2$ points. Therefore $h_{\nu}(f)=0$.  Hence, $f$ has no interesting invariant measure with no mass on $x_0$.  The above calculation then shows that the only interesting invariant positive strong submeasure for $f$ is $\mu _X$. 

\subsubsection{The case of transcendental maps on $\mathbb{C}^2$}

Now, we can apply the above arguments for the case of a transcendental map $f:\mathbb{C}^2\rightarrow \mathbb{C}^2$. The only caveat is that now we have not only one, but many, compact K\"aler surfaces $X$ containing $\mathbb{C}^2$ as an open dense set. We denote by $\mathcal{K}(\mathbb{C}^2)$ the set of all such compact K\"ahler surfaces $X$. For each $X\in \mathcal{K}(\mathbb{C}^2)$, we denote by $f_X$ the associated one on $X$ (note that, as before, the map $f_X$ is not defined on the whole $X$ but only on an open dense subset). Then following the idea in \cite{truong3}, we will define a notion of K\"ahler topological entropy for $f$ as follows: 
\begin{eqnarray*}
h_{top, \mathcal{K}}(f):=\sup _{X\in \mathcal{K}(\mathbb{C}^2)}h_{top}(f_X). 
\end{eqnarray*}
Here $h_{top}(f_X)$ is, as before, defined using the infinity graph $\Gamma _{f_X,\infty}$, which is the closure of $\{(x,f(x),f^(x),\ldots ):~x\in \mathbb{C}^2\}$ in $X^{\mathbb{N}}$, and the shift map $\phi _{f_X}(x_1,x_2,\ldots )=(x_2, x_3,\ldots )$.

We again conjecture that if $f$ is a generic transcendental map then $h_{top, \mathcal{K}(\mathbb{C}^2)}(f)=\infty$. For a generic transcendental map, probably we can show that $f(U)$ is dense in $\mathbb{C}^2$ for all open neighborhood of a point at infinity. In this case, for any $X\in \mathcal{K}(\mathbb{C}^2)$ and  a positive measure $\mu$ on $X$, which is decomposed as $\mu =\mu _0+\mu _1$, where $\mu _0$ has mass only in $\mathbb{C}^2$ and $\mu _1$ has only mass in $X\backslash \mathbb{C}^2$: 
\begin{eqnarray*}
f_*(\mu )=f_*(\mu _0)+a \mu _X,
\end{eqnarray*} 
where $\mu _X=\sup _{x\in X}\delta _x$. Thus we have a similar behaviour of the pushforward of positive measures across all the compactifications $X$. This is in contrast to the case of meromorphic maps in $\mathbb{C}^2$ as we mentioned previously. 

{\bf Remark.} If we generalise the definition of topological entropy of $h_{top}^*$ in \cite{benini-fornaess-peters} to higher dimensions, it follows that for $f(x,y)=(x+1,y^2)$ then $h_{top}^*(f)=0$, which is not the desired value. In fact, consider the compactification $X=\mathbb{P}^2$ of $\mathbb{C}^2$. Then, $(\mathbb{C}^2,f)$ is embedded as a subsystem of $(\Gamma _{f,\infty},\phi _f)$, it follows that $h_{top}^*(f)\leq h_{top}(\phi _f)$. However, for this compactification of $\mathbb{C}^2$, we mentioned before that $h_{top}(\phi _f)=0$.

\section{Conclusion: Why prefer submeasures?} In all our definitions above, we have used submeasures. However, there is also another natural generalisation of measures, that is supermeasures, which are the same as superlinear operators on $C^0(X)$. Which leads to the question: Why using submeasures but not supermeasures? We will now provide a brief explanation of why submeasures are better than supermeasures for the questions considered here. 

First of all, we look at the case of pushforwarding a measure by a meromorphic map. We saw that defining it as a positive strong submeasure is the same as pulling back a continuous function as an upper-semicontinuous function. Proposition \ref{PropositionContinuityPullbackFunctions} and the comments right before it, together with other good properties of this pullback operator, justify this latter point. We also do not know whether the pushforward of supermeasures will have good properties like that for submeasures. 

Second, we look at the case of entropy for maps. In this case, submeasures go along very well with the Variational Principle. As we will see in Section 4, for any continuous map $f:X\rightarrow X$ of a compact metric space, there is a positive strong submeasure $\mu$ of mass $1$ so that $h_{\mu}(f)=h_{top}(f)$. If we use supermeasures instead, we would need to define measure entropy as follows: for a supermeasure $\mu$, we have $\tilde{h}_{\mu}:=\inf _{\nu \in M^+(X),~\nu \geq \mu}h_{\nu }(f)$. But then it is not true in general that there is a supermeasure $\mu$ of mass $1$ so that $\tilde{h}_{\mu}(f)=h_{top}(f)$. (In fact, there are examples of $f:X\rightarrow X$ such that $h_{top}(f)>h_{\nu}(f)$ for all invariant probability measures $\nu$.) Moreover, for a meromorphic map, it is not clear that the similar construction with either Cesaro's average procedure on $X$ or invariant measures on $\Gamma _{f,\infty}$ would lead to dynamically interesting invariant supermeasures.

\end{document}